\title{Stark Systems over Gorenstein Local Rings}
\author{Ryotaro Sakamoto}
\date{\today}
\address{Graduate School of Mathematical Sciences, The University of Tokyo, 
3-8-1 Komaba, Meguro-Ku, Tokyo, 153-8914, Japan}
\email{sakamoto@ms.u-tokyo.ac.jp}
\begin{document}
\maketitle

\begin{abstract}
In this paper, we define a Stark system over a complete Gorenstein local ring with a finite residue field. 
Under some standard assumptions, we show that the module of Stark systems is free of rank one and 
that these systems control all the Fitting ideals of the Pontryagin dual of the dual Selmer group. 
This is a generalization of the theory, developed by B. Mazur and K. Rubin, 
on Stark (or Kolyvagin) systems over principal ideal local rings. 
Applying our result to a certain Selmer structure over the cyclotomic Iwasawa algebra, 
we suggest a new method for controlling Selmer groups using Euler systems. 
\end{abstract}

\newtheorem{theorem}{Theorem}[section]
\newtheorem{proposition}[theorem]{Proposition}
\newtheorem{lemma}[theorem]{Lemma}
\newtheorem{corollary}[theorem]{Corollary}

\theoremstyle{remark}
\newtheorem{remark}[theorem]{Remark}

\theoremstyle{definition}
\newtheorem{definition}[theorem]{Definition}
\newtheorem{example}[theorem]{Example}
\newtheorem{hypothesis}[theorem]{Hypothesis}

\newcommand{\Hom}{{\rm Hom}}
\newcommand{\Gal}{{\rm Gal}}

\newcommand{\bC}{\mathbb{C}}
\newcommand{\bF}{\mathbb{F}}
\newcommand{\bQ}{\mathbb{Q}}
\newcommand{\bL}{\mathbb{L}}
\newcommand{\bR}{\mathbb{R}}
\newcommand{\bT}{\mathbb{T}}
\newcommand{\bZ}{\mathbb{Z}}

\newcommand{\cA}{\mathcal{A}}
\newcommand{\cB}{\mathcal{B}}
\newcommand{\cC}{\mathcal{C}}
\newcommand{\cD}{\mathcal{D}}
\newcommand{\cE}{\mathcal{E}}
\newcommand{\cF}{\mathcal{F}}
\newcommand{\cG}{\mathcal{G}}
\newcommand{\cH}{\mathcal{H}}
\newcommand{\cI}{\mathcal{I}}
\newcommand{\cJ}{\mathcal{J}}
\newcommand{\cK}{\mathcal{K}}
\newcommand{\cL}{\mathcal{L}}
\newcommand{\cM}{\mathcal{M}}
\newcommand{\cN}{\mathcal{N}}
\newcommand{\cO}{\mathcal{O}}
\newcommand{\cP}{\mathcal{P}}
\newcommand{\cQ}{\mathcal{Q}}
\newcommand{\cS}{\mathcal{S}}
\newcommand{\cT}{\mathcal{T}}

\newcommand{\fa}{\mathfrak{a}}
\newcommand{\fb}{\mathfrak{b}}
\newcommand{\fc}{\mathfrak{c}}
\newcommand{\fd}{\mathfrak{d}}
\newcommand{\ff}{\mathfrak{f}}
\newcommand{\fq}{\mathfrak{q}}
\newcommand{\fp}{\mathfrak{p}}
\newcommand{\fn}{\mathfrak{n}}
\newcommand{\fm}{\mathfrak{m}}

\section{Introduction}

Euler systems had been introduced by V.A. Kolyvagin in order to study Selmer groups. 
He constructed a collection of cohomology classes 
which is called Kolyvagin's derivative classes from an Euler system. 
He used these classes which come from the Euler system of Heegner points to bound the order of the Selmer group of 
certain elliptic curves over imaginary quadratic fields. 
By using the Euler system of the cyclotomic units, K. Rubin gave another proof of 
the classical Iwasawa Main Conjecture over $\bQ$, which had been proved by B. Mazur and A. Wiles. 
After that many mathematicians, especially K. Kato, B. Perrin-Riou, and K. Rubin, have studied 
Selmer gruops using Euler systems. 

B. Mazur and K. Rubin noticed that Kolyvagin's derivative classes have good interrelations.  
In \cite{MR1}, they defined a Kolyvagin system 
to be a system of cohomology classes satisfying these good interrelations. 

To explain their works in \cite{MR1} and \cite{MR2}, we introduce some notation.  
Let $K$ be a number field and $G_K$ denote the absolute Galois group of $K$. 
Let $R$ be a complete noetherian local ring with a finite residue field of odd characteristic and 
$T$ be a free $R$-module with an $R$-linear continuous $G_K$-action which is unramified 
outside a finite set of places of $K$. 
Let $\cF$ be a Selmer structure on $T$. 
Suppose that $T$ and $\cF$ satisfy some conditions (See Section 3 and Section 4). 
Let $\chi(\cF) \in \bZ$ be the core rank of $\cF$ (See Definition \ref{core rank}). 

Suppose that $R$ is a principal ideal local ring and $\chi(\cF) = 1$. 
In \cite{MR1}, they proved that the module of Kolyvagin systems for $\cF$ is free of rank one and 
that the $R$-module structure of the dual Selmer group associated with $\cF$ is determined by these systems. 
Furthermore, if $R$ is the ring of integers in a finite extension of the field $\bQ_p$, 
then they constructed a canonical morphism from the module of Euler systems to 
the module of Kolyvagin systems for the canonical Selmer structure (See Example \ref{cansel}). 

We still assume that $R$ is a principal ideal local ring. Suppose that $\chi(\cF) > 0$. 
In \cite{Sa} and \cite{MR2}, T. Sano and B. Mazur - K. Rubin defined Stark systems, independently 
(T. Sano called them unit systems). 
In \cite{MR2}, B. Mazur and K. Rubin removed the assumption $\chi(\cF) = 1$ 
by using Stark systems instead of Kolyvagin systems. 
Namely, they proved that the $R$-module structure of the dual Selmer group associated with $\cF$ 
is determined by Stark systems. 
Furthermore, they showed that the module of Stark systems is free of rank one and 
that there is an isomorphism from the module of Stark systems 
to the module of Kolyvagin systems if $\chi(\cF)=1$. 

To prove Iwasawa Main Conjecture using an Euler system, 
we need a Stark system over a complete regular local ring. 
In \cite{MR2}, the elementary divisor theorem is used to define the module of Stark systems. 
Hence Stark systems are not defined in \cite{MR2} when $R$ is not a principal ideal ring. 
Thus clearly we have two important problems. 
The first problem is to define a Stark system over an arbitrary complete noetherian local ring. 
The second problem is to control Selmer groups using Stark systems. 
For these problems, we show the following results in this paper. 

\subsection*{Stark systems over Gorenstein local rings}
Suppose that $R$ is a zero dimensional Gorenstein local ring. 
To define a Stark system over $R$, we use a natural generalization of Rubin's lattice, 
called the exterior bidual, instead of the exterior power. 
In \cite{BKS}, D. Burns, M. Kurihara, and T. Sano 
used the exterior bidual to state some conjectures about Rubin-Stark elements. 

By using the exterior bidual, we generalize 
some linear algebraic lemmas in \cite{MR2} to zero dimensional Gorenstein local rings (See Section 2).  
Thus, in the same way as \cite{MR2}, 
we are able to define a Stark system over $R$. 
Furthermore, under some mild assumptions, we prove that 
the module of Stark systems over $R$ is free of rank one (Theorem \ref{core}). 

Suppose that $R$ is an arbitrary complete Gorenstein local ring. 
Then we can construct an inverse system of the modules of Stark systems 
over some zero dimensional Gorenstein quotients of $R$. 
We define the module of Stark systems over $R$ to be its inverse limit and 
also prove that this module is free of rank one (Theorem \ref{main1}). 

\begin{remark}
When $R$ is the cyclotomic Iwasawa algebra, $\cF$ is the canonical Selmer structure, and $\chi(\cF) =1$, 
K. B\"uy\"uboduk proved that the module of Kolyvagin systems for $\cF$ is free of rank one in \cite{Bu}. 
More generally, when $R$ is a certain Gorenstein local ring, $\cF$ is a certain Selmer structure, and $\chi(\cF) = 1$, 
he showed that the module of Kolyvagin systems for $\cF$ is free of rank one in \cite{buk}. 
In this case, we are able to construct an isomorphism from the module of Kolyvagin systems to 
the module of Stark systems. Hence Theorem \ref{main1} is a generalization of this result. 
\end{remark}

\subsection*{Controlling Selmer groups using Stark systems}
Under some mild assumptions, we prove that all the Fitting ideals of the Pontryagin dual 
of the dual Selmer group associated with $\cF$ is determined by Stark systems for $\cF$ 
when $R$ is a complete Gorenstein local ring (Theorem \ref{main2}). 

\begin{remark}
If $R$ is a principal ideal ring and $M$ is a finitely generated $R$-module, 
then there is a non-canonical isomorphism $M \simeq \Hom(M, \bQ_p/\bZ_p)$ as $R$-modules, 
which implies that 
\begin{align*}
{\rm Fitt}_{i, R}(M) = {\rm Fitt}_{i, R}\left(\Hom(M, \bQ_p/\bZ_p)\right)
\end{align*} 
for any non-negative integer $i$. Thus Theorem \ref{main2} is a generalization of \cite[Theorem 8.6]{MR2} 
(See Proposition \ref{gen} and Remark \ref{compare-dvr}). 
\end{remark}

Applying our result to a certain Selmer structure over the cyclotomic Iwasawa algebra $\Lambda$, 
we prove that all the Fitting ideals of the Pontryagin dual of 
the $\Lambda$-adic dual Selmer group is determined by $\Lambda$-adic Stark systems (Theorem \ref{lambda-stark}). 
In particular, we show that $\Lambda$-adic Stark systems determine not only the characteristic ideal of 
this $\Lambda$-module but also its pseudo-isomorphism class. 
In \cite{MR1}, they showed that a primitive $\Lambda$-adic Kolyvagin system control the characteristic ideal of 
the Pontryagin dual of the $\Lambda$-adic dual Selmer group when the core rank is one. 
We will show that Theorem \ref{lambda-stark} is a generalization of this result (Proposition \ref{mr}). 
However, our proof of Theorem \ref{lambda-stark} is different from the proof in \cite{MR1}. 
Furthermore, there is a canonical map from 
the module of Euler systems to the module of $\Lambda$-adic Stark systems when the core rank is one. 
Hence we suggest a new method for controlling Selmer groups using Euler systems. 

\begin{remark}
After the author had got almost all the results in this paper, 
T. Sano told the author that D. Burns and he also were studying Stark systems and Kolyvagin systems 
over zero dimensional Gorensitein rings using an exterior bidual (See \cite{BS}). 
\end{remark}

\subsection*{Acknowledgement}
The author would like to express his gratitude to his supervisor Professor Takeshi Tsuji for many helpful advices. 
The author is supported by the FMSP program at the University of Tokyo. 

\subsection*{Notation} 
Let $p$ be an odd prime number and $K$ a number field. 
Let $K_{v}$ denote the completion of $K$ at a place $v$. 
For a field $L$, let $\overline{L}$ denote a fixed separable closure of $L$ and 
$G_{L} := \Gal(\overline{L}/L)$ the absolute Galois group of $L$. 

Throughout this paper, $R$ denotes a complete noetherian local ring 
with a finite residue field $\Bbbk$ of characteristic $p$. 
Let $\fm_R$ be the maximal ideal of $R$. 
In addition, $T$ denotes a free $R$-module of finite rank 
with an $R$-linear continuous $G_{K}$-action which is unramified outside a finite set of places of $K$. 
For any non-negative integer $i$, 
let $H^i(K, T)$ be the $i$-th continuous cohomology group of $G_K$ with coefficient $T$. 

If $A$ is a commutative ring and $I$ is an ideal of $A$, 
then define 
\begin{align*}
M^* := \Hom_{A}(M,A)
\end{align*}
and $M[I] := \{ x \in M \mid Ix = 0 \}$ for any $A$-module $M$. 
When $I = (x_1, \ldots, x_n)$, we write $M[x_1, \ldots, x_n]$ instead of $M[I]$. 
Let $\bigwedge^r_A M$ denote the $r$-th exterior power of $M$ in the category of $A$-modules. 
We denote by $L_A (M) \in \bZ_{>0} \cup \{\infty\}$ the $A$-length of $M$. 

\section{Preliminaries}
Let $A$ be a commutative ring, $M$ an $A$-module and $r$ a non-negative integer. 
We define the $r$-th exterior bidual $\bigcap^r_A M$ of $M$ by 
\begin{align*}
\bigcap^{r}_A M := \left( \bigwedge^{r}_A M^* \right)^* 
= \Hom_A \left( \bigwedge^r_A \Hom_A (M, A), A \right). 
\end{align*}
The pairing 
\begin{align*}
\bigwedge^{r}_A M \times \bigwedge^{r}_A M^* \to A, \ 
(m_{1} \wedge \cdots \wedge m_{r}, f_{1} \wedge \cdots \wedge f_{r}) \mapsto {\rm det}(f_{i}(m_{j})) 
\end{align*}
induces a canonical homomorphism 
\begin{align*}
l_{M} \colon \bigwedge^{r}_A M \to \bigcap^{r}_A M.  
\end{align*}
Note that the map $l_{M}$ is an isomorphism if $M$ is a finitely generated projective $A$-module. 

\begin{lemma}\label{base}
Let $F \xrightarrow{h} M \xrightarrow{g} N \to 0$ 
be an exact sequence of $A$-modules and $r$ a non-negative integer. 
If $F$ is a free $A$-module of rank $s \leq r$, then there is a unique $A$-linear map 
\begin{align*}
\phi \colon \bigwedge^{r-s}_A N \otimes_A \det(F) \to \bigwedge^{r}_A M
\end{align*}
such that $\phi\left( \wedge^{r-s}g(a) \otimes b \right) = a \wedge \left( \wedge^{s}h (b) \right)$ for any 
$a \in \bigwedge^{r-s}_A M$ and $b \in \det(F)$. 
\end{lemma}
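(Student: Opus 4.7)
The plan is to define $\phi$ by descent from an analogous map on $\bigwedge^{r-s}_A M \otimes_A \det(F)$. Consider the $A$-linear map $\Psi \colon \bigwedge^{r-s}_A M \otimes_A \det(F) \to \bigwedge^{r}_A M$ determined by $\Psi(a \otimes b) = a \wedge \wedge^s h(b)$, which is well-defined by the universal property of the tensor product. Since $g$ is surjective, so is $\wedge^{r-s}g$, and hence also $\wedge^{r-s}g \otimes \mathrm{id}_{\det F}$; this surjectivity gives uniqueness of $\phi$. For existence, I need to show that $\Psi$ factors through $\wedge^{r-s}g \otimes \mathrm{id}_{\det F}$, i.e., vanishes on its kernel.

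Because $\det(F)$ is a free $A$-module of rank one, the kernel of $\wedge^{r-s}g \otimes \mathrm{id}_{\det F}$ equals $\ker(\wedge^{r-s}g) \otimes_A \det(F)$. The key input is the standard identification of $\bigwedge^{*}_A N$ with $\bigwedge^{*}_A M$ modulo the two-sided ideal generated by $\ker(g) = \mathrm{Im}(h)$; from this, $\ker(\wedge^{r-s}g)$ is generated as an $A$-module by elements of the form $h(y) \wedge m_1 \wedge \cdots \wedge m_{r-s-1}$ with $y \in F$ and $m_i \in M$.

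To check that $\Psi$ kills these, fix a basis $e_1, \ldots, e_s$ of $F$, set $f_i := h(e_i)$, and let $\eta := e_1 \wedge \cdots \wedge e_s$, so $\det(F) = A\eta$ and $\wedge^s h(\eta) = f_1 \wedge \cdots \wedge f_s$. Writing $y = \sum_i c_i e_i$, I compute $\Psi\bigl((h(y) \wedge m_1 \wedge \cdots \wedge m_{r-s-1}) \otimes \eta\bigr) = \sum_i c_i\, f_i \wedge m_1 \wedge \cdots \wedge m_{r-s-1} \wedge f_1 \wedge \cdots \wedge f_s$, and each summand vanishes because $f_i$ occurs twice. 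The only delicate point is invoking the description of $\ker(\wedge^{r-s}g)$, a standard fact on exterior powers of surjections that I would either cite or derive in a sentence from the ideal-quotient presentation of $\bigwedge^{*}_A N$.
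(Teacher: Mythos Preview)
Your argument is correct and follows essentially the same approach as the paper: both define $\phi$ by lifting along the surjection $\wedge^{r-s}g$ and check well-definedness using the fact that any element of $\ker(\wedge^{r-s}g)$, when wedged with $\wedge^s h(b)$, vanishes. The paper simply compresses your explicit kernel computation into the single observation that $\bigwedge^{s+1}_A \mathrm{im}(h) = 0$ (since $\mathrm{im}(h)$ is generated by $s$ elements), which is exactly what your repeated-factor argument amounts to.
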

\begin{proof}
Let $n \in \bigwedge^{r-s}_A N$ and $b \in \det(F)$. 
Since the map $\bigwedge^{r-s}_A M \to \bigwedge^{r-s}_A N$ is surjective, 
we can take a lift $m$ of $n$ in $\bigwedge^{r-s}_A M$. 
Then we define the map $\phi$ by $\phi(n \otimes b) = m \wedge (\wedge^{s}h (b))$. 
Since $\bigwedge^{s+1}_A {\rm im}(h) = 0$, the map $\phi$ is well-defined. 
\end{proof}

\begin{lemma}\label{commlem}
Let $r$ be a non-negative integer. 
Suppose that we have the following commutative diagram of $A$-modules:
\begin{align*}
\xymatrix{ 
F \ar[r]^{\alpha} \ar[d]^{h} & M \ar[r]^{\beta} \ar[d]^{f} & N \ar[r] \ar[d]^{g} & 0
\\
F' \ar[r]^{\alpha'} & M' \ar[r]^{\beta'} & N' \ar[r] & 0
}
\end{align*}
where the horizontal rows are exact and both $F$ and $F'$ are free $A$-modules of rank $s \leq r$. 
Then the following diagram is commutative: 
\begin{align*}
\xymatrix{
\bigwedge^{r-s}_A N \otimes_A \det(F) \ar[r]^-{\phi} \ar[d]^{\wedge^{r-s}g \otimes \wedge^{s} h} 
& \bigwedge^{r}_A M \ar[d]^{\wedge^r f} 
\\
\bigwedge^{r-s}_A N' \otimes_A \det(F') \ar[r]^-{\phi'} & \bigwedge^{r}_A M' 
}
\end{align*}
where $\phi$ and $\phi'$ are the maps defined in Lemma \ref{base}. 
\end{lemma}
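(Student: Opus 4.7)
The plan is to verify the asserted identity of maps by evaluating both sides on a generating set of $\bigwedge^{r-s}_A N \otimes_A \det(F)$. First I would note that, since the top row of the diagram is exact, the induced map $\wedge^{r-s}\beta \colon \bigwedge^{r-s}_A M \to \bigwedge^{r-s}_A N$ is surjective, and therefore every element of $\bigwedge^{r-s}_A N \otimes_A \det(F)$ is an $A$-linear combination of pure tensors of the form $\wedge^{r-s}\beta(a) \otimes b$ with $a \in \bigwedge^{r-s}_A M$ and $b \in \det(F)$. By $A$-linearity of every arrow in the diagram, it therefore suffices to check the identity on such tensors.

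Next I would compute both composites on a generator $\wedge^{r-s}\beta(a)\otimes b$. Going right-then-down, Lemma \ref{base} gives $\phi(\wedge^{r-s}\beta(a)\otimes b)=a\wedge\wedge^{s}\alpha(b)$, and applying $\wedge^{r}f$ and the multiplicativity of exterior powers produces $\wedge^{r-s}f(a)\wedge\wedge^{s}(f\circ\alpha)(b)$. Going down-then-right, the left-hand vertical map sends $\wedge^{r-s}\beta(a)\otimes b$ to $\wedge^{r-s}(g\circ\beta)(a)\otimes\wedge^{s}h(b)$, and applying $\phi'$ (using a lift of $\wedge^{r-s}(g\circ\beta)(a)$ to $\bigwedge^{r-s}_A M'$) produces $\wedge^{r-s}f(a)\wedge\wedge^{s}(\alpha'\circ h)(b)$, provided $\wedge^{r-s}f(a)$ is a valid such lift, which it is because $\beta'\circ f=g\circ\beta$.

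Finally, the two resulting expressions agree because the left square of the original diagram commutes, i.e.\ $f\circ\alpha=\alpha'\circ h$, which gives $\wedge^{s}(f\circ\alpha)(b)=\wedge^{s}(\alpha'\circ h)(b)$. This concludes the verification of commutativity on a generating set and hence on the whole module.

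I do not expect any real obstacle: the argument is a direct diagram chase once the definition of $\phi$ from Lemma \ref{base} is unwound. The only point that requires a moment of care is the choice of lift used when applying $\phi'$; commutativity of the left square of the given diagram is precisely what guarantees that the natural candidate $\wedge^{r-s}f(a)$ really does lift $\wedge^{r-s}(g\circ\beta)(a)=\wedge^{r-s}(\beta'\circ f)(a)$, so that Lemma \ref{base} is applied consistently on both sides.
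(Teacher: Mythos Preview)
Your proposal is correct and is essentially the same argument as the paper's proof: evaluate both composites on generators $\wedge^{r-s}\beta(a)\otimes b$, using commutativity of the right square to see that $\wedge^{r-s}f(a)$ is a valid lift for $\phi'$ and commutativity of the left square to match the results. One small slip in your closing commentary: the fact that $\wedge^{r-s}f(a)$ lifts $\wedge^{r-s}(g\circ\beta)(a)=\wedge^{r-s}(\beta'\circ f)(a)$ is the commutativity of the \emph{right} square, not the left one (the left square $f\circ\alpha=\alpha'\circ h$ is what you correctly invoke in the final comparison).
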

\begin{proof}
Let $a \in \bigwedge^{r-s}_A M$ and $b \in \det(F)$. 
By Lemma \ref{base}, we compute 
\begin{align*}
\phi' \circ (\wedge^{r-s}g \otimes \wedge^{s} h)(\wedge^{r-s}\beta(a) \wedge b) 
&= \phi'( \wedge^{r-s} (g \circ \beta)(a) \otimes \wedge^s h(b))
\\
&= \phi'( \wedge^{r-s} (\beta' \circ f)(a) \otimes \wedge^s h(b))
\\
&= \wedge^{r-s}f(a) \otimes (\wedge^s \alpha' \circ h)(b)
\\
&= \wedge^{r}f ( a \wedge (\wedge^s \alpha(b)))
\\
&= \wedge^{r}f \circ \phi( \wedge^{r-s}\beta(a) \otimes b). 
\end{align*}
This completes the proof. 
\end{proof}

Suppose that $A$ is a zero dimensional Gorenstein local ring. 
Note that all the free $A$-modules are injective by the definition of Gorenstein ring. 
In particular, the functor $(-)^* = \Hom_A(-, A)$ is exact. 
Furthermore, Matlis duality assert that $L_A(M) = L_A(M^*)$ and 
the canonical map $l_M \colon M \to \bigcap^1_A M$ is an isomorphism 
for any finitely generated $A$-module $M$. 

For $i \in \{1, 2\}$, let $M_{i}$ be an $A$-module and $F_{i}$ a free $A$-module of rank $s_{i}$. 
Suppose that the following diagram is cartesian:
\begin{align*}
\xymatrix{
M_{1} \ar@{^{(}->}[r] \ar[d] & M_{2} \ar[d] 
\\
F_{1} \ar@{^{(}->}[r] &  F_{2} 
}
\end{align*}
where the horizontal maps are injective. 
Note that $F_2/F_1$ is a free $A$-module of rank $s_2 - s_1$. 
Applying Lemma \ref{base} to the exact sequence 
$(F_{2}/F_1)^* \to M_2^* \to M_1^* \to 0$, we obtain a map 
\begin{align*}
\widetilde{\Phi} \colon \bigcap^r_A M_2 \otimes_A \det((F_2/F_1)^*) \to \bigcap^{r-s_2+s_1}_A M_1
\end{align*}
for any non-negative integer $r \geq s_2 - s_1$. 
Therefore we get the following map which play an important role in this paper. 

\begin{definition}\label{map}
For any cartesian diagram as above and a non-negative integer $r \geq s_2 - s_1$, we define a map 
\begin{align*}
\Phi \colon \bigcap^r_A M_2 \otimes_A \det(F_2^*) \to 
\bigcap^r_A M_2 \otimes_A \det((F_2/F_1)^*) \otimes_A \det(F_1^*) 
\\
\xrightarrow{\widetilde{\Phi} \otimes {\rm id}_{\det(F_1^*)}} \bigcap^{r-s_2+s_1}_A M_1 \otimes_A \det(F_1^*)
\end{align*}
where the first map is induced by an isomorphism 
$\det((F_2/F_1)^*) \otimes_A \det(F_1^*) \simeq \det(F_2^*)$, 
$a \otimes b \to a \wedge \widetilde{b}$. 
Here $\widetilde{b}$ is a lift of $b$ in $\bigwedge^{s_1}_A F_2^*$. 
\end{definition}

The map $\Phi$ is a generalization of the map defined in \cite[Proposition A.1]{MR2}. 
Furthermore, we have the following proposition which is a generalization of \cite[Proposition A.2]{MR2} 
to zero dimensional Gorenstein local rings. 

\begin{proposition}\label{homlem}
Let $A$ be a zero dimensional Gorenstein local ring. 
Suppose that we have the following commutative diagram of $A$-modules
\begin{align*}
\xymatrix{
M_{1} \ar@{^{(}->}[r] \ar[d] & M_{2} \ar[d] \ar@{^{(}->}[r] & M_3 \ar[d] 
\\
F_{1} \ar@{^{(}->}[r] &  F_{2} \ar@{^{(}->}[r] & F_3
}
\end{align*}
such that $F_1$, $F_2$, and $F_3$ are free of finite rank and the two squares are cartesian. 
Let $s_i = {\rm rank}_A (F_i)$ and  $r \geq s_3 - s_1$.  
If $i,j \in \{1,2,3\}$ and $j < i$, we denote by
\begin{align*}
\Phi_{ij} \colon \bigcap^{r-s_3 +s_i}_A M_{i} \otimes_A {\rm det}(F_{i}^*) \to 
\bigcap^{r-s_3 + s_j}_A M_{j} \otimes_A {\rm det}(F_{j}^*)
\end{align*}
the map given by Definition \ref{map}. Then we have 
\begin{align*}
(-1)^{(s_2-s_1)(s_3-s_2)} \Phi_{31} = \Phi_{21} \circ \Phi_{32}. 
\end{align*}
\end{proposition}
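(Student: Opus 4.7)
The plan is to unwrap both sides of the claim via Definition \ref{map}, reducing the identity to an ``associativity'' of the underlying $\phi$-construction of Lemma \ref{base}, plus an elementary sign computation arising from the nested decompositions of $\det(F_3^*)$.

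Concretely, I fix a basis $e_1, \ldots, e_{s_3}$ of $F_3$ such that $e_1, \ldots, e_{s_i}$ is a basis of $F_i$ for $i = 1, 2$, and set $\epsilon_i = e_1^* \wedge \cdots \wedge e_{s_i}^*$ and $\eta_{ij} = e_{s_j+1}^* \wedge \cdots \wedge e_{s_i}^*$. The isomorphism $\det((F_i/F_j)^*) \otimes_A \det(F_j^*) \simeq \det(F_i^*)$ of Definition \ref{map} sends $\eta_{ij} \otimes \epsilon_j$ to $\eta_{ij} \wedge \epsilon_j = (-1)^{s_j(s_i-s_j)}\epsilon_i$. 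Applying this to each of $\Phi_{31}$, $\Phi_{32}$, $\Phi_{21}$ and evaluating on $x \otimes \epsilon_3$ for $x \in \bigcap^r_A M_3$, the identity reduces to
\[
(-1)^{(s_2-s_1)(s_3-s_2) + s_1(s_3-s_1)}\, \widetilde{\Phi}_{31}(x \otimes \eta_{31}) = (-1)^{s_2(s_3-s_2) + s_1(s_2-s_1)}\, \widetilde{\Phi}_{21}\bigl(\widetilde{\Phi}_{32}(x \otimes \eta_{32}) \otimes \eta_{21}\bigr),
\]
where $\widetilde{\Phi}_{ij}$ is the map $\widetilde{\Phi}$ of Definition \ref{map} applied to $M_j \subset M_i$, $F_j \subset F_i$. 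Expanding, both exponents equal $s_2 s_3 - s_2^2 + s_1 s_2 - s_1^2$, so the sign $(-1)^{(s_2-s_1)(s_3-s_2)}$ in the proposition is exactly the compensation needed for the one-step decomposition of $\det(F_3^*)$ used in $\Phi_{31}$ versus the two-step decomposition used in $\Phi_{21} \circ \Phi_{32}$.

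What remains is the unsigned identity $\widetilde{\Phi}_{31}(x \otimes \eta_{31}) = \widetilde{\Phi}_{21}\bigl(\widetilde{\Phi}_{32}(x \otimes \eta_{32}) \otimes \eta_{21}\bigr)$. I dualize using $\bigcap^r_A M = (\bigwedge^r_A M^*)^*$, reducing it to the identity $\phi_{31}(a \otimes \eta_{31}) = \phi_{32}(\phi_{21}(a \otimes \eta_{21}) \otimes \eta_{32})$ in $\bigwedge^r_A M_3^*$ for every $a \in \bigwedge^{r-s_3+s_1}_A M_1^*$, where $\phi_{ij}$ is the map produced by Lemma \ref{base} from the exact sequence $(F_i/F_j)^* \to M_i^* \to M_j^* \to 0$ with left-hand map $h_{ij}$ (the sequence is exact because the diagram is cartesian and $(-)^*$ is exact on the Gorenstein ring $A$). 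Writing $\bar{e}_i^*$ for the restriction of $e_i^* \in F_3^*$ to $M_3$, the key observation is $h_{31}(\eta_{31}) = h_{21}(\eta_{21}) \wedge h_{32}(\eta_{32}) = \bar{e}_{s_1+1}^* \wedge \cdots \wedge \bar{e}_{s_3}^*$ in $\bigwedge^{s_3-s_1}_A M_3^*$, using the canonical lift of $h_{21}(\eta_{21})$ from $M_2^*$ to $M_3^*$. Both $\phi_{31}(a \otimes \eta_{31})$ and $\phi_{32}(\phi_{21}(a \otimes \eta_{21}) \otimes \eta_{32})$ then equal $\tilde{a} \wedge h_{31}(\eta_{31})$ for any lift $\tilde{a}$ of $a$ to $\bigwedge^{r-s_3+s_1}_A M_3^*$, and independence from the choice of lift is the same wedge-kills-image argument used in the proof of Lemma \ref{base}.

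The main obstacle is the sign bookkeeping: the convention of Definition \ref{map} wedges $\det((F_i/F_j)^*)$ in front of a lift of $\det(F_j^*)$, and $(-1)^{(s_2-s_1)(s_3-s_2)}$ is precisely the discrepancy between applying this convention once (in $\Phi_{31}$) and twice (in $\Phi_{21} \circ \Phi_{32}$). Once the signs are carefully tabulated, the underlying associativity is immediate from the formula in Lemma \ref{base}.
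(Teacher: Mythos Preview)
Your proposal is correct and follows essentially the same route as the paper: both dualize to the maps $\phi_{ij}$ produced by Lemma \ref{base} from the exact sequences $(F_i/F_j)^* \to M_i^* \to M_j^* \to 0$, verify the associativity $\phi_{31}(a \otimes \eta_{31}) = \phi_{32}(\phi_{21}(a \otimes \eta_{21}) \otimes \eta_{32})$ by choosing compatible lifts, and then account for the sign $(-1)^{(s_2-s_1)(s_3-s_2)}$ via the determinant decompositions of Definition \ref{map}. The only organizational difference is that you fix an explicit basis $e_1,\ldots,e_{s_3}$ of $F_3$ adapted to the filtration (which is legitimate since each $F_j$ is injective, hence a direct summand of $F_i$) and tabulate the exponents $s_j(s_i-s_j)$ directly, whereas the paper works with generic elements $y \in \det((F_2/F_1)^*)$, $z \in \det((F_3/F_2)^*)$ and extracts the sign from the order reversal in $z \wedge \tilde{y}$ versus $\tilde{y} \wedge z$.
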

\begin{proof}
Let $i \in \{1, 2\}$. 
Applying Lemma \ref{base} to the exact sequence 
$(F_{i+1}/F_i)^* \xrightarrow{h_i} M_{i+1}^* \to M_i^* \to 0$, we have a map 
\begin{align*}
\phi_i \colon \bigwedge^{r-s_3+s_i}_A M_i^* \otimes_A \det((F_{i+1}/F_i)^*) \to 
\bigwedge^{r - s_3 + s_{i+1}}_A M_{i+1}^*. 
\end{align*}
Let $x_1 \in \bigwedge^{r- s_3 + s_1}_A M_1^*$, 
$y \in \det((F_2/F_1)^*)$, and $z \in \det((F_3/F_2)^*)$. 
Then we compute 
\begin{align*}
\phi_2 \circ (\phi_1 \otimes {\rm id}_{\det((F_3/F_2)^*)})(x_1 \otimes y \otimes z) 
&= \phi_2((x_2 \wedge (\wedge^{s_2-s_1}h_1(y))) \otimes z)
\\
&= x_3 \wedge w \wedge (\wedge^{s_3-s_2}h_2(z))
\end{align*}
where $x_{2}$ is a lift of $x_1$ to $\bigwedge^{r-s_3+s_1}_A M_{2}^*$, 
$x_{3}$ is a lift of $x_2$ to $\bigwedge^{r-s_3+s_1}_A M_{3}^*$, 
and $w$ is a lift of $\wedge^{s_2-s_1}h_1(y)$ to $\bigwedge^{s_2-s_1}_A M_3^*$. 
Let $\widetilde{y}$ be a lift of $y$ to $\bigwedge^{s_2-s_1}_A (F_3/F_1)^*$. 
We denote by $h_3 \colon (F_3/F_1)^* \to M_3^*$ the dual of the map $M_3 \to F_3/F_1$. 
Since $\wedge^{s_2-s_1}h_3(\widetilde{y})$ is a lift of $\wedge^{s_2-s_1}h_1(y)$, 
we may assume that $w = \wedge^{s_2-s_1}h_3(\widetilde{y})$. 
Let 
\begin{align*}
\phi_3 \colon \bigwedge^{r-s_3+s_1}_A M_1 \otimes_A \det((F_3/F_1)^*) \to \bigwedge^{r}_A M_3^*
\end{align*}
be the map defined by the exact sequence $(F_3/F_1)^* \xrightarrow{h_3} M_3^* \to M_1^* \to 0$ 
using Lemma \ref{base}. Then we have 
\begin{align*}
\phi_{3}(x_1 \otimes (z \wedge \widetilde{y})) &= x_3 \wedge (\wedge^{s_3-s_2}h_3(z)) \wedge (\wedge^{s_2-s_1}h_3(\widetilde{y}))
\\
&= (-1)^{(s_2-s_1)(s_3-s_2)} x_3 \wedge w \wedge (\wedge^{s_3-s_2}h_2(z)). 
\end{align*}
Note that the image of $y \otimes z$ under the isomorphism 
\begin{align*}
\det((F_2/F_1)^*) \otimes_A \det((F_3/F_2)^*) \simeq \det((F_3/F_1)^*)
\end{align*}
in Definition \ref{map} is $z \wedge \widetilde{y}$. Hence by taking dual, we get the desired equality. 
\end{proof}

\begin{proposition}\label{commlem2}
Let $A$ be a zero dimensional Gorenstein local ring and $r$ a non-negative integer. 
Suppose that we have the following commutative diagram of $A$-modules:
\begin{align*}
\xymatrix{
 0 \ar[r] & N \ar[r] \ar[d] & M \ar[r] \ar[d] & F \ar[d]
\\
 0 \ar[r] & N' \ar[r] & M' \ar[r] & F'
}
\end{align*}
where the horizontal rows are exact and both $F$ and $F'$ are free $A$-modules of rank $s \leq r$. 
Then the following diagram commutes: 
\begin{align*}
\xymatrix{
 \bigcap^{r}_A M  \ar[r] \ar[d] & \bigcap^{r-s}_A N \otimes_A \det(F) \ar[d]
\\
 \bigcap^{r}_A M' \ar[r] & \bigcap^{r-s}_A N' \otimes_A \det(F') 
}
\end{align*}
where the horizontal maps are defined by Definition \ref{map}. 
\end{proposition}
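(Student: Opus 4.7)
The plan is to reduce the claim to Lemma \ref{commlem} by a duality argument, exploiting that $A$ is a zero-dimensional Gorenstein local ring so that $(-)^* = \Hom_A(-,A)$ is an exact functor on $A$-modules. The key observation is that the horizontal arrow $\bigcap^{r}_A M \to \bigcap^{r-s}_A N \otimes_A \det(F)$ is, up to the canonical identifications $(\bigwedge^{r}_A M^*)^* = \bigcap^{r}_A M$ and $(\bigwedge^{r-s}_A N^* \otimes_A \det(F^*))^* \simeq \bigcap^{r-s}_A N \otimes_A \det(F)$, the $A$-linear dual of the map
\begin{align*}
\phi \colon \bigwedge^{r-s}_A N^* \otimes_A \det(F^*) \to \bigwedge^{r}_A M^*
\end{align*}
produced by Lemma \ref{base} from the dualized sequence $F^* \to M^* \to N^* \to 0$. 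The same description applies to the bottom horizontal arrow via the second row.

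First I would dualize the given diagram to obtain
\begin{align*}
\xymatrix{
F'^* \ar[r] \ar[d] & M'^* \ar[r] \ar[d] & N'^* \ar[r] \ar[d] & 0
\\
F^* \ar[r] & M^* \ar[r] & N^* \ar[r] & 0,
}
\end{align*}
and verify that each row is exact: surjectivity of $M^* \twoheadrightarrow N^*$ is immediate from injectivity of $N \hookrightarrow M$ and exactness of $(-)^*$, while equality of the kernel of $M^* \to N^*$ with the image of $F^* \to M^*$ follows by factoring $M \to F$ through $M/N \hookrightarrow F$. Specifically, the short exact sequence $0 \to N \to M \to M/N \to 0$ dualizes to $0 \to (M/N)^* \to M^* \to N^* \to 0$, and the injection $M/N \hookrightarrow F$ dualizes to a surjection $F^* \twoheadrightarrow (M/N)^*$ from a free module.

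I would then apply Lemma \ref{commlem} to this dualized diagram, taking $F'^*$ and $F^*$ in the roles of the ``$F$'' and ``$F'$'' of that lemma. Its conclusion gives commutativity of the resulting square of Lemma \ref{base} maps. A final application of $(-)^*$, together with the identifications recalled above, recovers precisely the diagram of the proposition. I do not anticipate a serious obstacle; the only bookkeeping concerns matching the map $\Phi$ of Definition \ref{map} with the dual of $\phi$. In our setting one has $F_1 = 0$, so $\det(F_1^*) = A$ and $F_2/F_1 = F$, and the formulation of $\Phi$ as $\bigcap^{r}_A M \otimes_A \det(F^*) \to \bigcap^{r-s}_A N$ is transported to the proposition's formulation $\bigcap^{r}_A M \to \bigcap^{r-s}_A N \otimes_A \det(F)$ by the canonical isomorphism $\det(F^*) \otimes_A \det(F) \simeq A$.
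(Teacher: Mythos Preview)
Your proposal is correct and is precisely the argument the paper has in mind: the paper's proof is the single line ``This proposition follows from Lemma \ref{commlem},'' and what you have written is exactly the dualize--apply Lemma \ref{commlem}--dualize-back unpacking of that line, including the verification that the dualized rows are right-exact and the identification of $\Phi$ with the dual of the Lemma \ref{base} map. No gap and no meaningful difference in approach.
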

\begin{proof}
This proposition follows from Lemma \ref{commlem}.  
\end{proof}

\section{Selmer Structures}

In this section, we review the results of \cite{MR1} and \cite{MR2}. 
Recall that $p$ is an odd prime number, $K$ is a number field, 
$(R, \fm_R)$ is a complete noetherian local ring with finite residue field $\Bbbk$ of characteristic $p$, 
and $T$ is a free $R$-module of finite rank with an $R$-linear continuous $G_{K}$-action 
which is unramified outside a finite set of places of $K$. 

Throughout this paper, 
we assume that the field $\overline{K}$ is contained in the complex number field $\bC$ and 
that the fixed separable closure $\overline{K}_\fq$ of $K_\fq$ containing $\overline{K}$ for each prime $\fq$ of $K$. 
Let $K(\fq)$ denote the $p$-part of the ray class field of $K$ modulo $\fq$ and 
$K(\fq)_\fq$ the closure of $K(\fq)$ in $\overline{K}_\fq$.
Let $\cD_\fq := \Gal(\overline{K}_\fq/ K_\fq)$ be the decomposition group at $\fq$ in $G_K$.  
Denote $H^1(K_\fq, T) = H^1(\cD_\fq, T)$.  
Let $\cI_\fq \subseteq \cD_\fq$ be the inertia group at $\fq$ and 
${\rm Fr}_\fq$ the Frobenius element of $\cD_\fq / \cI_\fq$. 
We write ${\rm loc}_\fq$ for the localization map $H^1(K, T) \to H^1(K_\fq, T)$. 
Define 
\begin{align*}
H^1_{\rm ur}(K_\fq, T) := \ker \left( H^{1}(K_{\fq}, T) \to H^{1}(\cI_\fq, T) \right)
\end{align*}
and 
\begin{align*}
H^1_{\rm tr}(K_\fq, T) := \ker \left( H^{1}(K_{\fq}, T) \to H^{1}(K(\fq)_\fq, T) \right)
\end{align*}
for each prime $\fq$ of $K$. 
Furthermore, we set $H^1_f(K_\fq, T) = H^1_{\rm ur}(K_\fq, T)$ if $T$ is unramified at a prime $\fq$ of $K$. 

\begin{definition}
A Selmer structure $\cF$ on $T$ is a collection of the following data:
\begin{itemize}
\item a finite set $\Sigma(\cF)$ of places of $K$, including all the infinite places, all the primes above $p$, 
and all the primes where $T$ is ramified,
\item a choice of $R$-submodule $H^{1}_{\cF}(K_{\fq}, T) \subseteq H^{1}(K_{\fq},T)$ for each $\fq \in \Sigma(\cF)$. 
\end{itemize}
Put $H^{1}_{\cF}(K_{\fq},T) = H^{1}_{\rm ur}(K_{\fq},T)$ for each prime $\fq \not\in \Sigma(\cF)$. 
We call $H^{1}_{\cF}(K_{\fq}, T)$ the local condition of $\cF$ at a prime $\fq$ of $K$. 
\end{definition}

Since $p$ is an odd prime, we have $H^1(K_v, T) = 0$ for any infinite place $v$ of $K$. 
Thus we ignore local condition at any infinite place of $K$.  

\begin{remark}
Let $\cF$ be a Selmer structure on $T$ and $R \to S$ a surjective ring homomorphism. 
Then $\cF$ induces a Selmer structure on the $S$-module $T \otimes_R S$, that we will denote by $\cF_S$. 
If there is no risk of confusion, then we write $\cF$ instead of $\cF_S$. 
\end{remark}

\begin{definition}
Let $\cF$ be a Selmer structure on $T$. 
Set
\begin{align*}
H^1_{/\cF}(K_\fq, T) := H^1(K_\fq, T)/H^1_{\cF}(K_\fq, T)
\end{align*} 
for any prime $\fq$ of $K$. 
We define the Selmer group $H_{\cF}^{1}(K, T) \subseteq H^{1}(K, T)$ associated with $\cF$ 
to be the kernel of the direct sum of localization maps
\begin{align*}
H_{\cF}^{1}(K,T) := \ker \left( H^{1}(K,T) \to \bigoplus_{\fq} 
H^{1}_{/\cF}(K_{\fq},T) \right)
\end{align*}
where $\fq$ runs through all the primes of $K$. 
\end{definition}

\begin{example}\label{cansel}
Suppose that $R$ is $p$-torsion-free. 
Then we define a canonical Selmer structure $\cF_{\rm can}$ on $T$ by the following data:
\begin{itemize}
\item $\Sigma(\cF_{\rm can}) = \{ \fq \mid T \text{ is ramified at } \fq \} \cup \{\fp \mid p \} 
\cup \{v \mid \infty \}$,  
\item we define a local condition at a prime $\fq \nmid p$ to be 
\begin{align*}
H^{1}_{\cF_{\rm can}}(K_{\fq}, T) = H^1_f(K_\fq, T) 
:= \ker \left( H^{1}(K_{\fq}, T) \to H^{1}(\cI_\fq, T \otimes_{\bZ_p} \bQ_p) \right),  
\end{align*}
\item we define a local condition at a prime $\fp \mid p$ to be 
$H^{1}_{\cF_{\rm can}}(K_{\fp}, T) = H^{1}(K_{\fp}, T)$. 
\end{itemize}
Note that $H^1_{\rm ur}(K_\fq, T) = H^{1}_f(K_{\fq}, T)$ for any prime $\fq$ where $T$ is unramified 
since the map $H^1(\cI_\fq, T) = \Hom(\cI_\fq, T) \to \Hom(\cI_\fq, T \otimes_{\bZ_p} \bQ_p) 
= H^1(\cI_\fq, T \otimes_{\bZ_p} \bQ_p)$ is injective. 
\end{example}

\begin{definition}
Let $\cF$ be a Selmer structure on $T$ and let $\fa$, $\fb$, and $\fc$
be pairwise relatively prime integral ideals of $K$. 
Define a Selmer structure $\cF^{\fa}_{\fb}(\fc)$ on $T$ by the following data:
\begin{itemize}
\item $\Sigma(\cF^{\fa}_{\fb}(\fc)) = \Sigma(\cF) \cup \{ \fq \mid \fa\fb\fc \}$,
\item define 
$H^{1}_{\cF_{\fb}^{\fa}(\fc)}(K_{\fq},T)= 
\begin{cases}
H^{1}(K_{\fq},T) & \text{if $\fq \mid \fa$}, 
\\
0 & \text{if $\fq \mid \fb$}, 
\\
H^1_{\rm tr}(K_\fq, T) & \text{if $\fq \mid \fc$}, 
\\
H^{1}_{\cF}(K_{\fq},T) & \text{otherwise}. 
\end{cases} $
\end{itemize}
\end{definition}

\begin{definition}
Let $\mu_{p^{\infty}}$ be the group of all $p$-power roots of unity. 
The Cartier dual of $T$ is defined by $T^{*}(1) := \Hom(T,\mu_{p^{\infty}})$. 
We have the local Tate paring
\begin{align*}
\langle \ ,\  \rangle_{\fq} \colon H^{1}(K_{\fq},T) \times H^{1}(K_{\fq},T^{*}(1)) \to \bQ_{p}/\bZ_{p}. 
\end{align*}
for each prime $\fq$ of $K$. 
Let $\cF$ be a Selmer structure on $T$. 
Put 
\begin{align*}
H^{1}_{\cF^{*}}(K_{\fq}, T^{*}(1)) 
:= \{ x \in H^{1}(K_{\fq}, T^{*}(1)) \mid \langle y ,x  \rangle_{\fq} = 0 
\text{ for any } y \in H^{1}_{\cF}(K_{\fq},T)\}. 
\end{align*}
Then the Selmer structure $\cF$ on $T$ defines a Selmer structure $\cF^*$ on $T^*(1)$. 
We define the dual Selmer group $H^{1}_{\cF^{*}}(K, T^{*}(1))$ associated with $\cF$ 
to be the kernel of the sum of localization maps
\begin{align*}
H^{1}_{\cF^{*}}(K, T^{*}(1)) := \ker \left( H^{1}(K, T^{*}(1)) 
\to \bigoplus_{\fq} H^{1}_{/\cF^*}(K_{\fq}, T^{*}(1)) \right) 
\end{align*}
where $\fq$ runs through all the primes of $K$. 
\end{definition}

\begin{remark}\label{dual}
If $R$ is a zero dimensional Gorenstein local ring, 
then there is a functorial isomorphism $M^* = \Hom_R(M, R) \simeq \Hom(M, \bQ_p/\bZ_p)$ as $R$-modules 
for any finitely generated $R$-module $M$. 
In fact, there exists a non-canonical isomorphism $R \simeq \Hom(R, \bQ_p/\bZ_p)$ as $R$-modules 
since 
\begin{align*}
L_R(\Hom(R, \bQ_p/\bZ_p)) = L_R(R) 
\end{align*}
and 
\begin{align*}
L_R(\Hom(R, \bQ_p/\bZ_p)[\fm_R]) = L_R(\Hom(\Bbbk, \bQ_p/\bZ_p)) = 1. 
\end{align*}
Then the image of $1$ under the isomorphism $R \simeq \Hom(R, \bQ_p/\bZ_p)$ is an injective map, 
and it induces a functorial isomorphism $M^* \simeq \Hom(M, \bQ_p/\bZ_p)$ 
since $L_R(M^*) = L_R(\Hom(M, \bQ_p/\bZ_p)) < \infty$. 
In particular, we have an $R[G_K]$-isomorphism
\begin{align*}
T^*(1) = \Hom(T, \bQ_p/\bZ_p) \otimes_{\bZ_p} \bZ_p(1) \simeq T^* \otimes_{\bZ_p} \bZ_p(1)
\end{align*}
and $T^*(1)$ is a free $R$-module of finite rank. Here $\bZ_p(1)$ is the Tate twist of $\bZ_p$. 
\end{remark}

\begin{theorem}[Poitou-Tate global duality]\label{pt}
Let $\cF_{1}$ and $\cF_2$ be Selmer structures on $T$. 
If $R$ is a zero dimensional Gorenstein local ring and 
$H^{1}_{\cF_{1}}(K_{\fq}, T) \subseteq H^{1}_{\cF_{2}}(K_\fq, T)$ for any prime $\fq$ of $K$, 
then we have the following exact sequence: 
\begin{align*}
0 \to H^{1}_{\cF_{1}}(K, T) \to H^{1}_{\cF_{2}}(K, T) 
\to \bigoplus_{\fq} H^{1}_{\cF_{2}}(K_{\fq}, T)/H^{1}_{\cF_{1}}(K_{\fq}, T) 
\to H^{1}_{\cF_{1}^{*}}(K, T^{*}(1))^* \to H^{1}_{\cF_{2}^{*}}(K, T^{*}(1))^* \to 0
\end{align*}
where $\fq$ runs through all the primes of $K$ which satisfies 
$H_{\cF_{1}}^{1}(K_{\fq}, T) \neq H^{1}_{\cF_{2}}(K_{\fq}, T)$. 
\end{theorem}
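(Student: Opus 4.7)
The plan is to deduce this from the classical Poitou--Tate nine-term exact sequence and extract the desired five-term sequence via three successive applications of the snake lemma. The key preliminary observation is Remark \ref{dual}: on finitely generated $R$-modules the functor $(-)^* = \Hom_R(-, R)$ is canonically isomorphic to the Pontryagin dual $\Hom(-, \bQ_p/\bZ_p)$, and is therefore exact. Since $R$ is a zero-dimensional Noetherian local ring with finite residue field, $R$ itself is finite, so $T$ and $T^*(1)$ are finite abelian groups and the classical Poitou--Tate theory applies directly.

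Next I will fix a finite set $\Sigma$ of places of $K$ containing $\Sigma(\cF_1) \cup \Sigma(\cF_2)$ together with every prime at which $H^1_{\cF_1}(K_\fq, T) \neq H^1_{\cF_2}(K_\fq, T)$. For each $i \in \{1, 2\}$ the classical Poitou--Tate sequence gives the exact sequence
\begin{align*}
0 \to H^1_{\cF_i}(K, T) \to H^1(K_\Sigma/K, T) \to \bigoplus_{\fq \in \Sigma} H^1_{/\cF_i}(K_\fq, T) \to H^1_{\cF_i^*}(K, T^*(1))^* \to H^2(K_\Sigma/K, T),
\end{align*}
where the Pontryagin dual has been rewritten via the preceding paragraph. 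I will stack the two sequences into a commutative ladder whose vertical maps are induced by the inclusions $\cF_1 \subseteq \cF_2$, the identity on $H^1(K_\Sigma/K, T)$, and the identity on $H^2(K_\Sigma/K, T)$. By local Tate duality the inclusions $H^1_{\cF_1}(K_\fq, T) \subseteq H^1_{\cF_2}(K_\fq, T)$ dualize to orthogonal inclusions $H^1_{\cF_2^*}(K_\fq, T^*(1)) \subseteq H^1_{\cF_1^*}(K_\fq, T^*(1))$, hence $H^1_{\cF_2^*}(K, T^*(1)) \subseteq H^1_{\cF_1^*}(K, T^*(1))$; exactness of $(-)^*$ then forces the rightmost vertical map to be surjective.

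Finally I will split each row into three short exact pieces by inserting the images of the two interior arrows, and apply the snake lemma three times in succession. The three connecting morphisms chain together into a single exact sequence
\begin{align*}
0 \to H^1_{\cF_1}(K, T) &\to H^1_{\cF_2}(K, T) \to \ker\left(\bigoplus_\fq H^1_{/\cF_1}(K_\fq, T) \to \bigoplus_\fq H^1_{/\cF_2}(K_\fq, T)\right) \\
&\to H^1_{\cF_1^*}(K, T^*(1))^* \to H^1_{\cF_2^*}(K, T^*(1))^* \to 0.
\end{align*}
The middle kernel equals $\bigoplus_\fq H^1_{\cF_2}(K_\fq, T)/H^1_{\cF_1}(K_\fq, T)$, since each local map $H^1/H^1_{\cF_1} \to H^1/H^1_{\cF_2}$ has kernel $H^1_{\cF_2}/H^1_{\cF_1}$; restricting the sum to primes with strict inclusion only drops zero summands.

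The hard part is purely bookkeeping: I must verify that the connecting homomorphisms produced by the snake lemma coincide with the natural maps, namely the restriction of the global-to-local localization map into the middle term and the map arising from the global Poitou--Tate pairing into $H^1_{\cF_1^*}(K, T^*(1))^*$. Once the ladder is drawn out carefully this is mechanical, and is the only content beyond the three-step diagram chase.
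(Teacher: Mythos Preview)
Your argument is correct and is essentially the standard derivation of the five-term sequence from Poitou--Tate duality, together with the identification of $(-)^*$ with the Pontryagin dual from Remark~\ref{dual}. The paper, however, does not carry out this argument: it simply cites \cite[Theorem~2.3.4]{MR1}, which already states the sequence with the Pontryagin dual in place of $(-)^*$, and then invokes Remark~\ref{dual} to replace $\Hom(-,\bQ_p/\bZ_p)$ by $\Hom_R(-,R)$. In other words, you have unpacked the proof of the cited result rather than quoting it. Your route is self-contained and makes the Gorenstein hypothesis visibly enter only at the dual-identification step; the paper's route is shorter but defers the real content to \cite{MR1}.
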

\begin{proof}
This theorem follows from \cite[Theorem 2.3.4]{MR1} and Remark \ref{dual}. 
\end{proof}

\begin{definition}[Cartesian Condition]
Suppose that $R$ is a zero dimensional Gorenstein local ring. 
We fix an injective map $\Bbbk \to R$. It induces an injective map $T/\fm_R T \to T$. 
We say that a Selmer structure $\cF$ on $T$ is cartesian if 
the map 
\begin{align*}
H^1_{/\cF}(K_\fq, T/ \fm_R T) \to H^1_{/\cF}(K_\fq, T)
\end{align*}
induced by the map $T/ \fm_R T \to T$ is injective for any prime $\fq \in \Sigma(\cF)$. 
\end{definition}

\begin{remark}\label{gorencar}
Suppose that $R$ is a zero dimensional Gorenstein local ring. 
By the definition of Gorenstein ring, the ideal $R[\fm_R]$ is principal, 
and the number of elements of the set 
$\left( \Hom_R(\Bbbk, R) \setminus \{ 0 \} \right) /R^{\times}$ is one. 
Hence the definition of cartesian condition is independent of the choice of the injective map $\Bbbk \to R$. 
\end{remark}

Let $R$ and $S$ be zero dimensional Gorenstein local rings and $\pi \colon R \to S$ a surjective ring homomorphism. 
Put $I = \ker(\pi)$. Since $R$ is an injective $R$-module, $R[I] \simeq \Hom_R(S, R)$ is an injective $S$-module. 
Since $S$ is a zero dimensional Gorenstein local ring, we conclude that there is an isomorphism 
$S \simeq R[I]$ as $R$-modules. 
In particular, there is an injective $R$-module homomorphism $S \to R$. 

\begin{lemma}\label{cartesiansurj}
Let $R$ and $S$ be zero dimensional Gorenstein local rings and $\pi \colon R \to S$ a surjective ring homomorphism. 
If a Selmer structure $\cF$ on $T$ is cartesian, then so is $\cF_S$. 
\end{lemma}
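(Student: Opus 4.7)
The plan is to exploit the factorization $\Bbbk \xrightarrow{\iota_S} S \xrightarrow{j} R$, where $\iota_S$ is the injection used to test cartesian-ness of $\cF_S$ and $j \colon S \hookrightarrow R$ is the $R$-module injection of the preceding paragraph. By Remark \ref{gorencar}, the injection $\iota_R \colon \Bbbk \hookrightarrow R$ used to test cartesian-ness of $\cF$ can be chosen to be exactly $j \circ \iota_S$. Tensoring over $R$ with $T$ then yields a factorization $T/\fm_R T \to T \otimes_R S \to T$ of the map $\iota_R \otimes T$, so on cohomology we obtain
\begin{align*}
H^1(K_\fq, T/\fm_R T) \xrightarrow{a} H^1(K_\fq, T \otimes_R S) \xrightarrow{b} H^1(K_\fq, T)
\end{align*}
whose composition is the map appearing in the cartesian condition for $\cF$ at $\fq$.

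Next I would verify that $a$ and $b$ descend to the relevant quotients by local conditions. Recall that $H^1_{\cF_S}(K_\fq, T \otimes_R S)$ is the image of $H^1_{\cF}(K_\fq, T)$ under the reduction $\pi_*$ induced by $R \twoheadrightarrow S$, and that $H^1_{\cF_\Bbbk}(K_\fq, T/\fm_R T)$ is the image of $H^1_{\cF}(K_\fq, T)$ under reduction modulo $\fm_R$; by transitivity, the Selmer structure on $T/\fm_R T \simeq (T \otimes_R S) \otimes_S \Bbbk$ is the same whether one induces it via $R \twoheadrightarrow \Bbbk$ or via $R \twoheadrightarrow S \twoheadrightarrow \Bbbk$. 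A short diagram chase then shows that composing $a$ (resp. $b$) with the appropriate reduction from $H^1(K_\fq, T)$ amounts to multiplication on $T$ by a lift of $\iota_S(1) \in S[\fm_S]$ (resp. by $j(1) \in R[I]$) followed by a reduction map; since $H^1_{\cF}$ is an $R$-submodule, this yields $a(H^1_{\cF_\Bbbk}) \subseteq H^1_{\cF_S}$ and $b(H^1_{\cF_S}) \subseteq H^1_{\cF}$. Hence $a$ and $b$ induce well-defined maps
\begin{align*}
\bar{a} \colon H^1_{/\cF_\Bbbk}(K_\fq, T/\fm_R T) \to H^1_{/\cF_S}(K_\fq, T \otimes_R S), \qquad \bar{b} \colon H^1_{/\cF_S}(K_\fq, T \otimes_R S) \to H^1_{/\cF}(K_\fq, T),
\end{align*}
and $\bar{a}$ is precisely the map whose injectivity one must establish for the cartesian-ness of $\cF_S$.

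Finally, the composition $\bar{b} \circ \bar{a}$ is by construction the cartesian map for $\cF$ at $\fq$, which is injective by hypothesis for every $\fq \in \Sigma(\cF)$. Since $\Sigma(\cF_S) \subseteq \Sigma(\cF)$, it follows that $\bar{a}$ is injective for every $\fq \in \Sigma(\cF_S)$, which is the cartesian condition for $\cF_S$. The only mildly technical step is the middle one, namely the bookkeeping verifying that $a$ and $b$ respect local conditions; but this is essentially formal once one identifies the relevant compositions with multiplications by socle elements of $S$ or elements of $R[I]$, so no real obstacle is anticipated.
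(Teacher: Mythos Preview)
Your approach is essentially the same as the paper's: factor the injection $\Bbbk \hookrightarrow R$ through $S$, observe that the induced maps on local cohomology respect the local conditions and hence descend to the quotients $H^1_{/\cF}$, and conclude that the injectivity of the composite forces injectivity of the first factor. The paper is terser—simply asserting that the injections $\Bbbk \to S$ and $S \to R$ induce the relevant maps on quotients—whereas you spell out the verification via multiplication by socle elements; both arguments are correct and amount to the same thing.
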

\begin{proof}
Let $S \to R$ and $\Bbbk \to S$ be injective $R$-module homomorphisms and $\fq \in \Sigma(\cF)$. 
Then these maps induce maps $H^1_{/\cF_S}(K_\fq, T \otimes_R S) \to H^1_{/\cF}(K_\fq, T)$ and 
$H^1_{/\cF}(K_\fq, T/ \fm_R T) \to H^1_{/\cF_S}(K_\fq, T \otimes_R S)$. 
By the definition of cartesinan condition and Remark \ref{gorencar}, the composition of the maps 
\begin{align*}
H^1_{/\cF_S}(K_\fq, T/ \fm_R T) = H^1_{/\cF}(K_\fq, T/ \fm_R T) \to 
H^1_{/\cF_S}(K_\fq, T \otimes_R S) \to H^1_{/\cF}(K_\fq, T)
\end{align*}
is injective, and thus so is $H^1_{/\cF_S}(K_\fq, T/ \fm_R T) \to H^1_{/\cF_S}(K_\fq, T \otimes_R S)$. 
\end{proof}

Let $\cH$ be the Hilbert class field of $K$ and $\cO_K$ the ring of integers of $K$. 
Set $\cH_{\infty} := \cH(\mu_{p^{\infty}}, (\cO_{K}^{\times})^{p^{-\infty}})$. 
In order to use the results of \cite{MR1} and \cite{MR2}, we will usually assume the following additional conditions. 
\begin{hypothesis}\label{standardhyp}
\begin{itemize}
\item[(H.1)] $(T/\fm_R T)^{G_{K}} = (T^{*}(1)[\fm_R])^{G_{K}} = 0$ and 
$T/\fm_R T$ is an irreducible $\Bbbk[G_{K}]$-module, 
\item[(H.2)] there is a $\tau \in {\rm Gal}(\overline{K}/\cH_{\infty})$ 
such that $T/(\tau - 1)T \simeq R$ as $R$-modules,  
\item[(H.3)] $H^{1}(\cH_{\infty}(T)/K, T/\fm_R T) = H^{1}(\cH_{\infty}(T)/K, T^{*}(1)[\fm_R])=0$ where 
$\cH_{\infty}(T)$ is the fixed field of the kernel of the map 
$\Gal(\overline{K}/\cH_{\infty}) \to {\rm Aut}(T)$. 
\end{itemize}
\end{hypothesis}

\begin{lemma}\label{mult}
Suppose that $R$ is a zero dimensional Gorenstein local ring and $(T/ \fm_R T)^{G_K} = 0$. 
Let $\cF$ be a cartesian Selmer structure on $T$. 
Then an injective map $\Bbbk \to R$ induces isomorphisms
\begin{itemize}
\item[(1)] $H^{1}(K, T/ \fm_R T) \simeq H^{1}(K, T)[\fm_R]$, 
\item[(2)] $H^{1}_{\cF}(K, T/\fm_R T) \simeq H^{1}_{\cF}(K, T)[\fm_R]$. 
\end{itemize}
\end{lemma}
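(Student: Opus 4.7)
For (1), first identify the image of $T/\fm_R T$ in $T$ under $\Bbbk \hookrightarrow R$ as $T[\fm_R] = xT$, where $x$ generates the socle $R[\fm_R]$; the Gorenstein assumption gives $\mathrm{ann}_R(x) = \fm_R$. From the resulting short exact sequence $0 \to T[\fm_R] \to T \to T/T[\fm_R] \to 0$, induction along the (finite, since $R$ is artinian) $\fm_R$-adic filtration on $T$ yields $T^{G_K} = (T/T[\fm_R])^{G_K} = 0$, using that each graded piece $\fm_R^iT/\fm_R^{i+1}T \simeq (\fm_R^i/\fm_R^{i+1}) \otimes_\Bbbk T/\fm_R T$ has trivial $G_K$-invariants by hypothesis. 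The long exact sequence then produces an injection $H^1(K, T/\fm_R T) \hookrightarrow H^1(K, T)$ whose image automatically lies in $H^1(K, T)[\fm_R]$, since the source is $\fm_R$-killed.

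The main obstacle is the reverse inclusion $H^1(K, T)[\fm_R] \subseteq \mathrm{image}$. The plan is a cocycle lift: given a class $c \in H^1(K, T)[\fm_R]$ represented by a cocycle $\gamma$, for each $y \in \fm_R$ there is a unique $t_y \in T$ with $y\gamma = dt_y$ (uniqueness from $T^{G_K} = 0$), and the map $y \mapsto t_y$ is $R$-linear, defining an element $\phi \in \Hom_R(\fm_R, T)$. Gorensteinness enters essentially here: $R$ is self-injective, so the free $R$-module $T$ is $R$-injective, giving $\mathrm{Ext}^1_R(\Bbbk, T) = 0$. Applying $\Hom_R(-, T)$ to the short exact sequence $0 \to \fm_R \to R \to \Bbbk \to 0$ then extends $\phi$ to some $\tilde t \in T = \Hom_R(R, T)$ with $t_y = y\tilde t$ for all $y \in \fm_R$, and the cocycle $\gamma - d\tilde t$ is cohomologous to $\gamma$ while taking values in $T[\fm_R]$, completing (1).

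For (2), the isomorphism of (1) is functorial in localization, so it suffices to check that $H^1_{/\cF}(K_\fq, T/\fm_R T) \to H^1_{/\cF}(K_\fq, T)$ is injective at every prime $\fq$. At $\fq \in \Sigma(\cF)$ this is precisely the cartesian hypothesis; at $\fq \notin \Sigma(\cF)$, where $T$ is unramified and the local condition is $H^1_{\mathrm{ur}}$, the same filtration/Ext-vanishing argument applies locally. A diagram chase on the localization sequences then identifies $H^1_\cF(K, T/\fm_R T)$ with $H^1_\cF(K, T) \cap H^1(K, T)[\fm_R] = H^1_\cF(K, T)[\fm_R]$.
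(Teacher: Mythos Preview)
Your argument for (1) is correct and genuinely different from the paper's. The paper does not lift cocycles; instead it embeds $T/xT$ into $T^d$ via multiplication by a set of generators $m_1,\ldots,m_d$ of $\fm_R$, observes that the cokernel has the form $D \otimes_R T$ for an $R$-module $D$ carrying trivial Galois action and hence has vanishing $G_K$-invariants via the $\fm_R$-adic filtration, and deduces $H^1(K, T/xT) \hookrightarrow H^1(K, T)^d$. Since the composite $H^1(K, T) \to H^1(K, T/xT) \hookrightarrow H^1(K, T)^d$ is multiplication by $(m_1,\ldots,m_d)$, the kernel of the first arrow is exactly $H^1(K, T)[\fm_R]$. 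Your cocycle-lift is arguably cleaner: it isolates precisely where Gorensteinness enters (self-injectivity of $R$, hence $R$-injectivity of the free module $T$, hence $\mathrm{Ext}^1_R(\Bbbk,T)=0$), whereas the paper stays at the level of long exact sequences and uses the Gorenstein hypothesis only through the identification $R[\fm_R]=xR$.

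For (2) your reduction to local injectivity of $H^1_{/\cF}(K_\fq, T/\fm_R T) \to H^1_{/\cF}(K_\fq, T)$ matches the paper, and the case $\fq \in \Sigma(\cF)$ is exactly the cartesian hypothesis. But at $\fq \notin \Sigma(\cF)$ your appeal to ``the same filtration/Ext-vanishing argument'' does not go through: that argument required $(T/\fm_R T)^{G} = 0$ for the relevant group, and there is no reason this holds for $G = G_{K_\fq}$ or $G = \cI_\fq$ (indeed $\cI_\fq$ acts trivially, so $(T/\fm_R T)^{\cI_\fq}=T/\fm_R T$). The paper instead uses the identification $H^1_{/\cF}(K_\fq, M) \simeq \Hom(\cI_\fq, M)^{\mathrm{Fr}_\fq = 1}$ coming from inflation--restriction and the triviality of the inertia action; injectivity is then immediate from $T/\fm_R T \hookrightarrow T$. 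This is a minor repair rather than a structural gap.
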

\begin{proof}
We fix an injective map $\Bbbk \to R$. Let $x$ be the image of $1$ under the fixed map $\Bbbk \to R$ 
and $m_1, \ldots, m_d$ generators of the maximal ideal $\fm_R$ of $R$. 
Then we have exact sequences $0 \to T \otimes_R \Bbbk \to T \to T/xT \to 0$ and 
$0 \to T/xT \xrightarrow{\times (m_1, \ldots, m_d)} T^{d}$. 
Since $(T/\fm_R T)^{G_K} = 0$, these exact sequences induce exact sequences 
\begin{align*}
0 \to H^1(K, T/\fm_R T) \to H^1(K, T) \to H^1(K, T/xT)
\end{align*}
and 
\begin{align*}
0 \to H^1(K, T/xT) \to H^1(K, T^{d}) = H^1(K, T)^d. 
\end{align*}
Since the composition of the maps $H^1(K, T) \to H^1(K, T/xT) \to H^1(K, T)^d$ 
is multiplication by $(m_1, \ldots, m_d)$, we have 
\begin{align*}
H^1(K, T/\fm_R T) &= \ker\left( H^1(K, T) \to H^1(K, T/xT) \right)
\\
&= \ker\left( H^1(K, T) \xrightarrow{\times (m_1, \ldots, m_d)} H^1(K, T)^d \right)
\\
&= H^1(K, T)[\fm_R]. 
\end{align*} 
To prove the assertion (2), we consider the following exact sequences:
\begin{align*}
\xymatrix{
0 \ar[r] & H^1_{\cF}(K, T/\fm_R T) \ar[r] \ar[d] & H^1(K, T/\fm_R T) \ar[r] \ar[d] & 
\bigoplus_\fq H^1_{/\cF}(K_\fq, T/\fm_R T) \ar[d]
\\
0 \ar[r] & H^1_{\cF}(K, T)[\fm_R] \ar[r] & H^1(K, T)[\fm_R] \ar[r] & \bigoplus_\fq H^1_{/\cF}(K_\fq, T)
}
\end{align*}
where $\fq$ runs through all the primes of $K$ and 
the vertical maps are induced by the fixed map $\Bbbk \to R$. 
Since the middle vertical map is an isomorphism, it suffices to show that the map 
$H^1_{/\cF}(K_\fq, T/\fm_R T) \to H^1_{/\cF}(K_\fq, T)$ is injective for any prime $\fq$ of $K$. 
If $\fq \in \Sigma(\cF)$, this map is injective by the definition of cartesian condition. 
Let $\fq \not\in \Sigma(\cF)$ and $M \in \{T, T/\fm_R T\}$. 
Then $M$ is unramified at $\fq$ and $H^1_{\cF}(K_\fq, M) = H^1_f(K_\fq, M)$.  
Since the cohomological dimension of $\cD_\fq/\cI_\fq \simeq \widehat{\bZ}$ is one, 
by the inflation-restriction exact sequence, we have 
\begin{align*}
H^1_{/\cF}(K_\fq, M) \simeq H^1(\cI_\fq, M)^{{\rm Fr}_\fq = 1} = \Hom(\cI_\fq, M)^{{\rm Fr}_\fq = 1}. 
\end{align*}
Hence the map $H^1_{/\cF}(K_\fq, T/\fm_R T) \to H^1_{/\cF}(K_\fq, T)$ is injective. 
\end{proof}

\begin{lemma}\label{dmult}
Let $\cF$ be a Selmer structure on $T$ and $I$ an ideal of $R$. 
Suppose that $R$ is an artinian local ring and $(T^*(1)[\fm_R])^{G_K} = 0$. 
Then the inclusion map $T^*(1)[I] \to T^*(1)$ induces isomorphisms
\begin{itemize}
\item[(1)] $H^{1}(K, T^*(1)[I]) \simeq H^{1}(K, T^*(1))[I]$, 
\item[(2)] $H^{1}_{\cF^*}(K, T^*(1)[I]) \simeq H^{1}_{\cF^*}(K, T^*(1))[I]$. 
\end{itemize}
\end{lemma}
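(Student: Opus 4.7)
The proof rests on two key properties of $M := T^{*}(1) = \Hom(T, \mu_{p^{\infty}})$. First, since $T$ is a free $R$-module and $\mu_{p^{\infty}}$ is divisible (hence injective in the category of abelian groups), $M$ is an injective $R$-module, via the adjunction $\Hom_R(-, \Hom(R, \mu_{p^{\infty}})) \simeq \Hom(-, \mu_{p^{\infty}})$. Second, $M$ is a finite $R$-module whose socle is $M[\fm_R]$, so every nonzero cyclic $R$-submodule has nonzero socle contained in $M[\fm_R]$. The hypothesis $(M[\fm_R])^{G_K} = 0$ therefore forces $M^{G_K} = 0$: any nonzero $m \in M^{G_K}$ would give a nonzero cyclic submodule $Rm \subseteq M^{G_K}$ whose socle $(Rm)[\fm_R]$ is a nonzero element of $(M[\fm_R])^{G_K}$, contradicting the hypothesis.

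For part (1), write $I = (x_1, \ldots, x_d)$. Applying the exact functor $\Hom(-, \mu_{p^{\infty}})$ to the presentation $T^d \xrightarrow{(x_i)} T \to T/IT \to 0$ produces an exact sequence $0 \to M[I] \to M \xrightarrow{(x_1, \ldots, x_d)} M^d$, so $Q := M/M[I]$ embeds in $M^d$ and $Q^{G_K} \hookrightarrow (M^d)^{G_K} = 0$. The long exact sequence of $0 \to M[I] \to M \to Q \to 0$ then yields an injection $H^{1}(K, M[I]) \hookrightarrow H^{1}(K, M)$ whose image is visibly contained in $H^{1}(K, M)[I]$.

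The harder direction is surjectivity onto $H^{1}(K, M)[I]$. Starting from a cocycle $c \in Z^{1}(K, M)$ whose class $\bar{c}$ is killed by $I$, I choose for each $i$ an element $m_i \in M$ with $x_i c(\sigma) = (\sigma - 1) m_i$. A direct computation gives $(\sigma - 1)(\sum a_i m_i) = (\sum a_i x_i) c(\sigma)$, so whenever $\sum a_i x_i = 0$ in $R$ we get $\sum a_i m_i \in M^{G_K} = 0$. This shows that the assignment $x_i \mapsto m_i$ defines a well-defined $R$-linear map $\phi : I \to M$, which extends through the inclusion $I \hookrightarrow R$ using the injectivity of $M$ to some $\widetilde{\phi} : R \to M$; setting $m := \widetilde{\phi}(1)$ gives $x_i m = m_i$ for all $i$. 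The cocycle $c'(\sigma) := c(\sigma) - (\sigma - 1) m$ is then cohomologous to $c$ and satisfies $x_i c'(\sigma) = 0$ for every $i$, so $c'$ takes values in $M[I]$ and represents the required class in $H^{1}(K, M[I])$.

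For part (2), the class $[c']$ built above is cohomologous to $[c]$ in $H^{1}(K, M)$, so at every prime $\fq$ of $K$ its localization in $H^{1}(K_\fq, M)$ agrees with that of $[c]$; provided that the Selmer structure $\cF^{*}$ on $M[I]$ is taken as the pullback of $\cF^{*}$ on $M$ under the inclusion (which is the natural convention compatible with the local Tate pairing), $[c']$ inherits the local $\cF^{*}$-conditions from $[c]$, giving the Selmer version of the isomorphism. The main technical input throughout is the injectivity of $M$ as an $R$-module, which plays the role analogous to the Gorenstein-specific embedding $\Bbbk \hookrightarrow R$ used in Lemma \ref{mult}; the main organizational hurdle is recognizing that $M^{G_K} = 0$ is precisely the vanishing needed to force the $m_i$'s into compatibility and thereby produce a single global lift $m$.
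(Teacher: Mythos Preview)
Your proof is correct. The paper itself does not give an argument here; it simply defers to \cite[Lemma 3.5.3]{MR1}, so there is no detailed comparison to make. Your direct cocycle-level argument is in the same spirit as the Mazur--Rubin proof but is in fact slightly more robust: in \cite{MR1} the ring is principal, so one can use the short exact sequence $0 \to M[\pi^k] \to M \xrightarrow{\pi^k} M \to 0$ (divisibility of $M$) and read the result off the long exact sequence directly. Over a general artinian local ring with an arbitrary ideal $I$, multiplication by a non-unit is not surjective on $M$, and your step of assembling the individual lifts $m_i$ into an $R$-linear map $\phi \colon I \to M$ (using $M^{G_K}=0$) and then extending to $R$ via injectivity of $M$ is exactly what is needed to replace that missing surjectivity. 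The observation that $M \simeq \Hom_{\bZ}(R,\mu_{p^\infty})^n$ is injective over $R$ is the crucial structural input, and you have identified and applied it correctly. The remark on part~(2), that $H^1_{\cF^*}(K_\fq, T^*(1)[I])$ is the pullback of $H^1_{\cF^*}(K_\fq, T^*(1))$, is also correct: this follows from the compatibility of the local Tate pairings for $T$ and $T/IT$ together with the definition of the propagated Selmer structure.
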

\begin{proof}
The proof of this lemma is same as that of \cite[Lemma 3.5.3]{MR1}. 
\end{proof}

\begin{definition} 
Let $\cF$ be a Selmer structure on $T$. 
Define a set $\cP(\cF)$ of primes of $K$ by $\cP(\cF) = \{ \fq \mid \fq \not\in \Sigma(\cF)\}$. 
Let $\tau$ be as in (H.2) and $q$ the order of the residue field $\Bbbk$ of $R$. 
Set $\cH_n = \cH(\mu_{q^n},(\cO_{K}^{\times})^{q^{-n}})$ for each positive integer $n$. 
We define a set $\cP_n(\cF)$ of primes of $K$ as 
\begin{align*}
\cP_n(\cF) = \{ \fq \in \cP(\cF) \mid \fq &\text{ is unramified in $\cH_n(T/\fm_R^n T)/K$}, 
\text{ ${\rm Fr}_\fq$ is conjugate to $\tau$ in $\Gal(\cH_n(T/\fm_R^n T)/K)$}  \}. 
\end{align*}
\end{definition}

\begin{remark}
If $\pi \colon R \to S$ is a surjective ring homomorphism, 
then we have $\cP_n(\cF) \subseteq \cP_n(\cF_S)$ for any positive integer $n$. 
\end{remark}

Let $\fq$ be a prime of $K$ where $T$ is unramified. 
We define the singular quotient at $\fq$ as 
\begin{align*}
H^1_{/f}(K_{\fq}, T) := H^{1}(K_{\fq}, T)/H^1_{f}(K_\fq, T). 
\end{align*}
We denote by ${\rm loc}^{/f}_\fq \colon H^1(K, T) \to H^1_{/f}(K_\fq, T)$ 
the composition of the localization map at $\fq$ 
and a surjective map $H^1(K_\fq, T) \to H^1_{/f}(K_\fq, T)$. 

Let $\cN (\cP)$ denote the set of square-free products of primes in a set $\cP$ of primes of $K$. 
By considering empty product, the trivial ideal $1$ is contained in $\cN(\cP)$. 

\begin{lemma}\label{local}
Suppose that $R$ is an artinian ring of length $n > 0$ and $T$ satisfies (H.2). 
If $\fq \in \cP_n(\cF)$, 
then both $H^1_f(K_{\fq}, T)$ and $H^1_{/f}(K_\fq, T)$ are free $R$-modules of rank one 
and the composition of maps
\begin{align*}
H^1_{\rm tr}(K_\fq, T) \to H^1(K_\fq, T) \to H^1_{/f}(K_\fq, T)
\end{align*}
is an isomorphism. 
Furthermore, we have canonical isomorphisms 
\begin{itemize}
\item[(1)] $H^1_f(K_\fq, T) \otimes_R R/I \simeq H^1_f(K_\fq, T/IT)$, 
\item[(2)] $H^1_{/f}(K_\fq, T) \otimes_R R/I \simeq H^1_{/f}(K_\fq, T/IT)$, 
\end{itemize}
for any ideal $I$ of $R$. 
In particular, the map $H^1(K_\fq, T) \otimes_R R/I \to H^1(K_\fq, T/IT)$ is an isomorphism. 
\end{lemma}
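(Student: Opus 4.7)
The plan is to reduce both $H^1_f(K_\fq, T)$ and $H^1_{/f}(K_\fq, T)$ to concrete $R$-modules built from $\tau$ acting on $T$, and then to verify freeness using splitting arguments valid over any local ring. First I would record the arithmetic input. Since $R$ is artinian of length $n$, $\fm_R^n = 0$, so $T = T/\fm_R^n T$, and the hypothesis $\fq \in \cP_n(\cF)$ forces ${\rm Fr}_\fq$ to be conjugate to $\tau$ in $\Gal(\cH_n(T)/K)$; after conjugating we may assume ${\rm Fr}_\fq$ and $\tau$ induce the same endomorphism of $T$. Since $\tau \in \Gal(\overline{K}/\cH_\infty) \subseteq \Gal(\overline{K}/\cH_n)$ and $\mu_{q^n} \subseteq \cH_n$, Frobenius fixes $\mu_{q^n}$, so $N\fq \equiv 1 \pmod{q^n}$; because $q \in \fm_R$ forces $q^n = 0$ in $R$, we conclude $N\fq = 1$ in $R$.

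Next, the standard inflation--restriction calculation at an unramified prime $\fq \nmid p$ yields $H^1_f(K_\fq, T) = H^1_{\rm ur}(K_\fq, T) = T/({\rm Fr}_\fq - 1)T = T/(\tau - 1)T$, which is free of rank one by (H.2). On the other side, $H^1_{/f}(K_\fq, T) \simeq H^1(\cI_\fq, T)^{\cD_\fq/\cI_\fq}$; using that $T$ is $p$-torsion, that $H^1(\cI_\fq, T) \simeq T(-1)$ via the pro-$p$ tame quotient of inertia, and that $N\fq = 1$ in $R$, this becomes $T^{{\rm Fr}_\fq = N\fq} = T^\tau$. To show $T^\tau$ is free of rank one, I would argue as follows: freeness of $T/(\tau-1)T \simeq R$ makes the surjection $T \twoheadrightarrow R$ split, so $(\tau-1)T$ is a direct summand of $T$, hence stably free, and therefore free of rank $d - 1$ (with $d = {\rm rank}_R T$) over the local ring $R$. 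The surjection $T \twoheadrightarrow (\tau-1)T$ then splits as well, producing $T \simeq T^\tau \oplus (\tau-1)T$ with $T^\tau$ free of rank one.

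For the isomorphism $H^1_{\rm tr} \xrightarrow{\sim} H^1_{/f}$, I plan a direct cocycle computation on the tame quotient of $G_{K_\fq}$, pro-generated by Frobenius $\phi$ and a tame inertia generator $\sigma$ subject to $\phi \sigma \phi^{-1} = \sigma^{N\fq}$. With $\sigma$ acting trivially on $T$ and $N\fq = 1$ in $R$, the cocycle condition collapses to $c(\sigma) \in T^\tau$, with $c(\phi) \in T$ unconstrained, while coboundaries form $((\tau-1)T) \times 0$. This produces a canonical splitting $H^1(K_\fq, T) = H^1_f \oplus T^\tau$ under which the quotient map to $H^1_{/f}$ is the second projection. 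The extension $K(\fq)_\fq / K_\fq$ is totally tamely ramified cyclic of degree $p^m := (N\fq - 1)_p \geq p^n$, so $p^m = 0$ in $R$, and $G_{K(\fq)_\fq}$ is tamely generated by $\phi$ and $\sigma^{p^m}$. A class $[c]$ restricts to a coboundary on $G_{K(\fq)_\fq}$ iff $c(\phi) \in (\tau-1)T$, since the condition $c(\sigma^{p^m}) = p^m c(\sigma) = 0$ is vacuous; this cuts out precisely the second summand. Hence $H^1_{\rm tr} = 0 \oplus T^\tau$ and the composition with $H^1 \twoheadrightarrow H^1_{/f}$ is the identity on $T^\tau$.

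Finally, for the base-change statements, (1) is immediate from right-exactness of $\otimes_R R/I$ applied to the presentation $T \xrightarrow{\tau - 1} T \to H^1_f(K_\fq, T) \to 0$, which matches the analogous presentation over $R/I$. For (2), the splitting $T \simeq T^\tau \oplus (\tau-1)T$ of free $R$-modules shows that $\ker(\tau - 1)$ commutes with $\otimes_R R/I$, giving $T^\tau \otimes_R R/I \simeq (T/IT)^\tau$, and the same argument over $R/I$ identifies this with $H^1_{/f}(K_\fq, T/IT)$. The ``in particular'' assertion then follows from the direct-sum decomposition $H^1(K_\fq, T) = H^1_f \oplus H^1_{/f}$, which is preserved by tensoring with $R/I$. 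The main obstacle is the freeness of $T^\tau$: (H.2) only provides freeness of the cokernel of $\tau - 1$, so one needs the two-step splitting above together with the fact that stably free modules over a local ring are free, in place of a Smith-normal-form argument that would require $R$ to be a PID.
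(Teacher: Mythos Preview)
The paper's proof is a one-line citation of \cite[Lemma~1.2.3 and Lemma~1.2.4]{MR1}, so there is no argument to compare against; your proposal is effectively a reconstruction of what those lemmas do. Your overall strategy is correct and standard: identify $H^1_f$ and $H^1_{/f}$ with $T/(\tau-1)T$ and $T^{\tau}$ via the tame description, and get freeness of $T^{\tau}$ by the two-step splitting ($T/(\tau-1)T$ free $\Rightarrow$ $(\tau-1)T$ free $\Rightarrow$ $T^{\tau}$ free) rather than any PID argument. The base-change statements follow as you say, since $(\tau-1)T$ being a direct summand of $T$ forces $(\tau-1)T \cap IT = I(\tau-1)T$, whence $(T/IT)^{\tau} = T^{\tau}/IT^{\tau}$.

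One point deserves more care. You assert that $K(\fq)_\fq/K_\fq$ is totally tamely ramified of degree exactly $p^m = (N\fq-1)_p$, with $G_{K(\fq)_\fq}$ generated by $\phi$ and $\sigma^{p^m}$. Total ramification is correct but needs justification: since ${\rm Fr}_\fq$ is conjugate to $\tau$, which lies in $\Gal(\overline{K}/\cH_\infty) \subseteq \Gal(\overline{K}/\cH)$, the prime $\fq$ splits completely in $\cH$ and is therefore principal; the Artin map then sends a global generator of $\fq$ to the identity in $\Gal(K(\fq)/K)$, so Frobenius at $\fq$ is trivial there. The degree, however, need not equal $p^m$: the inertia group is the $p$-part of $(\cO_K/\fq)^{\times}$ modulo the image of $\cO_K^{\times}$, hence cyclic of order $p^{m'}$ with $m' \leq m$. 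What rescues the argument is the condition that ${\rm Fr}_\fq$ is trivial on $(\cO_K^{\times})^{1/q^n} \subseteq \cH_n$, forcing every global unit to be a $q^n$-th power modulo $\fq$; this gives $m' \geq n$, so $p^{m'} = 0$ in $R$ and the vacuity of $c(\sigma^{p^{m'}}) = p^{m'} c(\sigma) = 0$ still holds. With this correction the computation of $H^1_{\rm tr}$ goes through unchanged.
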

\begin{proof}
This lemma follows from \cite[Lemma 1.2.3]{MR1} and \cite[Lemma 1.2.4]{MR1}. 
\end{proof}

\begin{corollary}\label{cartesian}
Suppose that $R$ is a zero dimensional Gorenstein local ring of length $n > 0$ and $T$ satisfies (H.2). 
Let $\fa, \fb, \fc \in \cN(\cP_n(\cF))$ with $\fa\fb\fc \in \cN(\cP_n(\cF))$. 
If a Selmer structure $\cF$ on $T$ is cartesian, then so is $\cF^\fa_\fb(\fc)$. 
\end{corollary}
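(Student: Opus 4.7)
The plan is to verify the cartesian condition prime by prime. Fix $\fq \in \Sigma(\cF^\fa_\fb(\fc)) = \Sigma(\cF) \cup \{\fq \mid \fa\fb\fc\}$. Since $\fa\fb\fc \in \cN(\cP_n(\cF))$, no prime divides both $\Sigma(\cF)$ and $\fa\fb\fc$, and since $\fa,\fb,\fc$ are pairwise coprime, no prime divides two of them. So the four cases below are disjoint.

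\textbf{Case $\fq\in\Sigma(\cF)$.} Here $H^1_{\cF^\fa_\fb(\fc)}(K_\fq,T) = H^1_\cF(K_\fq,T)$, so the relevant quotients coincide and the required injectivity is precisely the cartesian property of $\cF$.

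\textbf{Case $\fq\mid\fa$.} Here $H^1_{\cF^\fa_\fb(\fc)}(K_\fq,T) = H^1(K_\fq,T)$, so $H^1_{/\cF^\fa_\fb(\fc)}(K_\fq,\,\cdot\,) = 0$ and the map is trivially injective.

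\textbf{Cases $\fq\mid\fb$ and $\fq\mid\fc$.} These are the substantive cases. Fix $\fq \in \cP_n(\cF)$ dividing $\fb$ or $\fc$. By Lemma \ref{local}, the composition $H^1_{\rm tr}(K_\fq,T) \to H^1(K_\fq,T) \to H^1_{/f}(K_\fq,T)$ is an isomorphism of free rank-one $R$-modules, so the transverse subgroup splits off and we get an $R$-module decomposition
\begin{align*}
H^1(K_\fq, T) = H^1_f(K_\fq,T) \oplus H^1_{\rm tr}(K_\fq,T),
\end{align*}
exhibiting $H^1(K_\fq,T)$ as a free $R$-module of rank $2$. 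This decomposition is functorial with respect to maps of $R[G_K]$-modules that are unramified at $\fq$; in particular it is compatible with the map induced by $T/\fm_RT \to T$ (which is an injection of such modules, using that $T$ is free and that $\Bbbk \hookrightarrow R$ via the fixed injection). Hence $H^1_{/\cF^\fa_\fb(\fc)}(K_\fq,T)$ is isomorphic to $H^1(K_\fq,T)$ when $\fq\mid\fb$, and isomorphic to $H^1_f(K_\fq,T)$ (via the projection killing $H^1_{\rm tr}$) when $\fq\mid\fc$. In both cases the target is a free $R$-module, say $F$.

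The required map is then identified with the map $F \otimes_R \Bbbk \to F$ induced by $\Bbbk \hookrightarrow R$: on the source side, Lemma \ref{local} (with $I = \fm_R$) provides a canonical isomorphism $F \otimes_R \Bbbk \simeq F/\fm_R T\text{-version}$, i.e.\ $H^1(K_\fq,T)\otimes_R\Bbbk \simeq H^1(K_\fq,T/\fm_RT)$ and $H^1_f(K_\fq,T)\otimes_R\Bbbk \simeq H^1_f(K_\fq,T/\fm_RT)$, naturally with respect to the base-change map. Since $F$ is free and $\Bbbk \hookrightarrow R$ is injective, tensoring preserves injectivity, so $F \otimes_R \Bbbk \to F$ is injective, concluding these two cases.

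\textbf{Main obstacle.} The only non-automatic point is bookkeeping: one must confirm that the Lemma \ref{local} isomorphisms, combined with the freeness-based injection $F \otimes_R \Bbbk \hookrightarrow F$, genuinely compute the map on $H^1_{/\cF^\fa_\fb(\fc)}$ induced by $T/\fm_RT \to T$. This is a naturality check (functoriality of base change in Lemma \ref{local} and of the $H^1_f \oplus H^1_{\rm tr}$ splitting along the unramified map $T/\fm_RT \to T$), rather than a conceptual difficulty.
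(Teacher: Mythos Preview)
Your proof is correct and follows essentially the same approach as the paper: both reduce the non-trivial cases $\fq\mid\fa\fb\fc$ to the observation that $H^1_{/\cF^\fa_\fb(\fc)}(K_\fq,T)$ is a free $R$-module (via Lemma~\ref{local}) whose base change to $\Bbbk$ is canonically $H^1_{/\cF^\fa_\fb(\fc)}(K_\fq,T/\fm_RT)$, so that the map in question is $\mathrm{id}\otimes i$ on a free module and hence injective. The paper treats all primes $\fq\mid\fa\fb\fc$ uniformly without the explicit $H^1_f\oplus H^1_{\rm tr}$ splitting, but this is only a cosmetic difference.
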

\begin{proof}
We fix an injective map $i \colon \Bbbk \to R$. Let $\fq \mid \fa\fb\fc$ be a prime of $K$. 
We only need to show that the map 
\begin{align*}
H^1_{/\cF^\fa_\fb(\fc)}(K_\fq, T/ \fm_R T) \to H^1_{/\cF^\fa_\fb(\fc)}(K_\fq, T)
\end{align*}
induced by the map $i \colon \Bbbk \to R$ is injective. 
By the definition of the Selmer structure $\cF^\fa_\fb(\fc)$ and Lemma \ref{local}, 
the map $H^1(K_\fq, T) \to H^1(K_\fq, T/\fm_R T)$ induces an isomorphism 
$H^1_{/\cF^\fa_\fb(\fc)}(K_\fq, T) \otimes_R \Bbbk \simeq H^1_{/\cF^\fa_\fb(\fc)}(K_\fq, T/ \fm_R T)$. 
Since $H^1_{/\cF^\fa_\fb(\fc)}(K_\fq, T)$ is a free $R$-module by Lemma \ref{local} and 
the composition of the maps 
\begin{align*}
H^1_{/\cF^\fa_\fb(\fc)}(K_\fq, T) \otimes_R \Bbbk \simeq H^1_{/\cF^\fa_\fb(\fc)}(K_\fq, T/ \fm_R T) 
\to H^1_{/\cF^\fa_\fb(\fc)}(K_\fq, T) 
\end{align*}
is ${\rm id}_{H^1_{/\cF^\fa_\fb(\fc)}(K_\fq, T)} \otimes i$, 
the map $H^1_{/\cF^\fa_\fb(\fc)}(K_\fq, T/ \fm_R T) \to H^1_{/\cF^\fa_\fb(\fc)}(K_\fq, T)$ is injective. 
\end{proof}

\begin{definition}\label{core rank}
We define the core rank $\chi(\cF)$ of a Selmer structure $\cF$ on $T$ by 
\begin{align*}
\chi(\cF) := \dim_{\Bbbk}H^{1}_{\cF}(K, T/\fm_R T) - \dim_{\Bbbk}H^{1}_{\cF^{*}}(K, T^{*}(1)[\fm_R]). 
\end{align*}
\end{definition}

\begin{example}\label{cancore}
Let $R$ be the ring of integers of a finite extension of the field $\bQ_p$. 
Suppose that $(T/\fm_R T)^{G_K} = (T^*(1)[\fm_R])^{G_K} = 0$. 
Then by the same proof of \cite[Theorem 5.2.15]{MR1}, we have 
\begin{align*}
\chi(\cF_{\rm can}) = \sum_{v \mid \infty} {\rm corank}_{R} \left( H^{0}(K_{v}, T^{*}(1)) \right) + 
\sum_{\fp \mid p} {\rm rank}_R \left( H^2(K_\fq, T) \right) 
\end{align*}
where $\cF_{\rm can}$ is the canonical Selmer structure defined in Example \ref{cansel} and 
\begin{align*}
{\rm corank}_R(M) := {\rm rank}_R (\Hom(M, \bQ_p/\bZ_p))
\end{align*}
for any finitely generated $R$-module $M$. 
\end{example}

\begin{corollary}\label{corerank}
Suppose that $R$ is an artinian local ring of length $n > 0$ and $T$ satisfies (H.2). 
Let $\cF$ be a Selmer structure on $T$ and $\fa, \fb, \fc \in \cN(\cP_n(\cF))$ with 
$\fa\fb\fc \in \cN(\cP_n(\cF))$. 
Then we have 
\begin{align*}
\chi(\cF^\fa_\fb(\fc)) = \chi(\cF) + \nu(\fa) - \nu(\fb)
\end{align*}
where $\nu(\fn)$ denotes the number of prime factors of $\fn \in \cN(\cP_n(\cF))$. 
\end{corollary}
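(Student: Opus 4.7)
The plan is to apply Poitou--Tate global duality (Theorem \ref{pt}) over the residue field $\Bbbk$ to the Selmer structures induced on $T/\fm_R T$. Since $\Bbbk$ is a field, hence a zero-dimensional Gorenstein local ring, Theorem \ref{pt} applies; and since the core rank is by definition the difference of $\Bbbk$-dimensions of $H^1_{\cF}(K,T/\fm_R T)$ and $H^1_{\cF^*}(K,T^*(1)[\fm_R])$, each comparison reduces to an alternating sum of dimensions in the Poitou--Tate five-term sequence, which collapses to the sum of $\Bbbk$-dimensions of the local quotient conditions.

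For the $\fa$- and $\fb$-parts, I would apply Theorem \ref{pt} to the inclusions $\cF \subseteq \cF^\fa$ and $\cF^\fa_\fb \subseteq \cF^\fa$. In both cases the local conditions differ only at primes dividing $\fa$ and $\fb$ respectively, and by Lemma \ref{local} the relevant local quotients $H^1_{/f}(K_\fq, T/\fm_R T)$ (for $\fq \mid \fa$) and $H^1_f(K_\fq, T/\fm_R T)$ (for $\fq \mid \fb$) are free of rank one over $\Bbbk$. This immediately gives
\begin{align*}
\chi(\cF^\fa) - \chi(\cF) = \nu(\fa), \qquad \chi(\cF^\fa) - \chi(\cF^\fa_\fb) = \nu(\fb).
\end{align*}

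The subtle step is the $\fc$-part, where the two local conditions $H^1_f(K_\fq, T/\fm_R T)$ and $H^1_{\rm tr}(K_\fq, T/\fm_R T)$ are both one-dimensional subspaces of the two-dimensional space $H^1(K_\fq, T/\fm_R T)$, but neither contains the other, so Theorem \ref{pt} cannot be applied directly between $\cF^\fa_\fb$ and $\cF^\fa_\fb(\fc)$. The idea is to compare both structures with their common relaxation $\cF^{\fa\fc}_\fb$: by Lemma \ref{local}, each of the quotients $H^1(K_\fq, T/\fm_R T)/H^1_f(K_\fq, T/\fm_R T)$ and $H^1(K_\fq, T/\fm_R T)/H^1_{\rm tr}(K_\fq, T/\fm_R T)$ is one-dimensional over $\Bbbk$, and two applications of Theorem \ref{pt} yield
\begin{align*}
\chi(\cF^{\fa\fc}_\fb) - \chi(\cF^\fa_\fb) \;=\; \nu(\fc) \;=\; \chi(\cF^{\fa\fc}_\fb) - \chi(\cF^\fa_\fb(\fc)),
\end{align*}
so $\chi(\cF^\fa_\fb(\fc)) = \chi(\cF^\fa_\fb)$. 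Combining the three identities yields the desired formula. The main obstacle is precisely this detour through $\cF^{\fa\fc}_\fb$ at the $\fc$-primes, forced by the incomparability of the unramified and transverse conditions; once the right auxiliary Selmer structure is chosen, the computation is a routine dimension count.
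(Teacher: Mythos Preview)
Your argument is correct and follows essentially the same approach as the paper: apply Poitou--Tate duality (Theorem \ref{pt}) over the residue field $\Bbbk$ together with Lemma \ref{local} to compute the local dimension contributions. The only difference is organizational: the paper reaches $\cF^\fa_\fb(\fc)$ from $\cF$ in two steps via the single intermediate structure $\cF^{\fa\fc}$ (handling $\fa,\fc$ together and then $\fb,\fc$ together), whereas you use four steps via $\cF^\fa$, $\cF^\fa_\fb$, and $\cF^{\fa\fc}_\fb$; both routes rest on the same detour idea to handle the incomparable transverse and unramified conditions at primes dividing $\fc$.
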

\begin{proof}
Applying Theorem \ref{pt} with $\cF_1 = \cF$ and $\cF_2 = \cF^{\fa\fc}$, 
we have an exact sequence 
\begin{align*}
0 \to H^1_{\cF}(K, T/\fm_R T) \to &H^1_{\cF^{\fa\fc}}(K, T/\fm_R T) \to 
\bigoplus_{\fq \mid \fa\fc} H^1_{/f}(K_\fq, T/\fm_R T) 
\\
&\to H^1_{\cF^*}(K, T^*(1)[\fm_R])^* \to H^1_{(\cF^{\fa\fc})^*}(K, T^*(1)[\fm_R])^* \to 0. 
\end{align*}
By the definition of core rank and Lemma \ref{local}, we have 
\begin{align*}
\chi(\cF^{\fa\fc}) &= \chi(\cF) + \sum_{\fq \mid \fa\fc} \dim_\Bbbk H^1_{/f}(K_\fq, T/\fm_R T) 
= \chi(\cF) + \nu(\fa) + \nu(\fc). 
\end{align*}
Again using Lemma \ref{local} and Theorem \ref{pt} with $\cF_1 = \cF^\fa_\fb(\fc)$ and $\cF_2 = \cF^{\fa\fc}$, 
we have $\chi(\cF^\fa_\fb(\fc)) = \chi(\cF^{\fa\fc}) - \nu(\fb) - \nu(\fc) = \chi(\cF) + \nu(\fa) - \nu(\fb)$. 
\end{proof}

\begin{proposition}\label{chevotarev}
Suppose that $R$ is an artinian local ring and $T$ satisfies Hypothesis \ref{standardhyp}. 
Let $c \in H^{1}(K, T)$ and $c^* \in H^{1}(K, T^*(1))$ be non-zero elements and $n$ a positive integer. 
Then there is a set $\cQ \subseteq \cP_n(\cF)$ of positive density such that 
${\rm loc}_\fq(c) \neq 0$ and ${\rm loc}_\fq(c^*) \neq 0$ for every $\fq \in \cQ$.
\end{proposition}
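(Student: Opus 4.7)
The plan is a standard Chebotarev density argument, as in the proof of \cite[Proposition 3.6.1]{MR1}, broken into a residual reduction, a group-theoretic construction, and an application of Chebotarev density.

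Since $R$ is artinian, I may multiply $c$ and $c^*$ by suitable elements of $R$ to obtain non-zero classes in $H^1(K,T)[\fm_R]$ and $H^1(K, T^*(1))[\fm_R]$; by Lemma~\ref{mult}(1) and Lemma~\ref{dmult}(1) these are identified with non-zero classes in $H^1(K, T/\fm_R T)$ and $H^1(K, T^*(1)[\fm_R])$, and non-vanishing of their localizations implies the same for the original $c$ and $c^*$. So I assume $c$ and $c^*$ are residual. Using (H.3) and the inflation-restriction sequence, they correspond to non-zero $G_K$-equivariant homomorphisms
\begin{align*}
\psi_c \colon G_{\cH_\infty(T)} \to T/\fm_R T, \qquad
\psi_{c^*} \colon G_{\cH_\infty(T)} \to T^*(1)[\fm_R].
\end{align*}
By (H.1) and Matlis duality (Remark~\ref{dual}), both targets are irreducible $\Bbbk[G_K]$-modules, so $\psi_c$ and $\psi_{c^*}$ are surjective. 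Let $M$ be the fixed field of $\ker\psi_c \cap \ker\psi_{c^*}$; Goursat's lemma forces the $G_K$-equivariant injection $\Gal(M/\cH_\infty(T)) \hookrightarrow (T/\fm_R T) \oplus T^*(1)[\fm_R]$ to be either surjective or the graph of a $G_K$-isomorphism between the two factors.

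With $\tau$ as in (H.2), I consider $\gamma = \tau\sigma$ for some $\sigma \in \Gal(M/\cH_\infty(T))$ to be chosen. Since $\sigma$ fixes $\cH_n(T/\fm_R^n T)$, the image of $\gamma$ in $\Gal(\cH_n(T/\fm_R^n T)/K)$ equals that of $\tau$, so any prime $\fq$ with ${\rm Fr}_\fq$ conjugate to $\gamma$ lies in $\cP_n(\cF)$. Under the canonical isomorphism
\begin{align*}
H^1_{\rm ur}(K_\fq, T/\fm_R T) \simeq (T/\fm_R T)/(\tau-1)(T/\fm_R T) \simeq \Bbbk
\end{align*}
coming from (H.2), the cocycle identity $c(\tau\sigma) = c(\tau) + \tau \cdot \psi_c(\sigma)$ combined with the trivial action of $\tau$ on the $\Bbbk$-quotient shows that the image of ${\rm loc}_\fq c$ equals $\overline{c(\tau)} + \overline{\psi_c(\sigma)}$, and analogously for $c^*$. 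Hence the required non-vanishing reduces to finding $\sigma$ with $\overline{\psi_c(\sigma)} \neq -\overline{c(\tau)}$ and $\overline{\psi_{c^*}(\sigma)} \neq -\overline{c^*(\tau)}$ in the two $\Bbbk$-quotients.

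In both Goursat cases, the composite map $\Gal(M/\cH_\infty(T)) \to \Bbbk \oplus \Bbbk$ has surjective projections; its image has size $|\Bbbk|^2$ (resp.\ $|\Bbbk|$), while the ``bad'' locus (union of two forbidden cosets) has size at most $2|\Bbbk|-1$ (resp.\ $2$). Since $p$ is odd, $|\Bbbk| \geq 3$, so a valid $\sigma$ exists. Applying the Chebotarev density theorem to a finite Galois quotient of $\Gal(\overline{K}/K)$ containing the relevant data then produces a positive-density set $\cQ \subseteq \cP_n(\cF)$ of primes $\fq$ with ${\rm Fr}_\fq$ conjugate to $\gamma$, completing the proof. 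I expect the main obstacle to be the Goursat-based simultaneous treatment of $c$ and $c^*$, together with the technicality of passing from the infinite Galois extension $M/K$ to a finite quotient for Chebotarev.
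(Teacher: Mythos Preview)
Your sketch is exactly the argument the paper invokes (it simply cites \cite[Proposition~3.6.1]{MR1}), and the overall structure is sound. Two technical points deserve tightening. First, Lemma~\ref{mult}(1) as stated requires $R$ to be zero-dimensional Gorenstein (so that the socle $R[\fm_R]$ is one-dimensional and the map $\Bbbk \hookrightarrow R$ behaves as in the proof), whereas the proposition assumes only that $R$ is artinian; since every application in the paper is over a Gorenstein ring this is harmless in context, but strictly the citation does not cover the stated generality. Second, (H.1) asserts only $\Bbbk[G_K]$-irreducibility of $T/\fm_R T$, so the image of the group homomorphism $\psi_c$ is a priori just a nonzero $\bF_p[G_K]$-submodule and need not equal $T/\fm_R T$; what you actually need---and do get, since the $\Bbbk$-span of $\mathrm{Im}(\psi_c)$ is everything---is that the composite $G_{\cH_\infty(T)} \to T/\fm_R T \twoheadrightarrow \Bbbk$ is nonzero, after which the cruder observation that two proper cosets cover at most a $2/p < 1$ fraction of the group replaces the Goursat dichotomy.
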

\begin{proof}
The proof of this proposition is same as the proof of \cite[Proposition 3.6.1]{MR1}. 
\end{proof}

\begin{remark}
Proposition \ref{chevotarev} is a weaker version of \cite[Proposition 3.6.1]{MR1}. 
Thus we do not need the assumption (H.4) defined in \cite{MR1} or \cite{MR2} 
to prove Proposition \ref{chevotarev}. 
\end{remark}

\begin{lemma}\label{easy}
Suppose that $R$ is an artinian local ring and $T$ satisfies Hypothesis \ref{standardhyp}. 
Let $c \in H^1(K, T)$ with $Rc \simeq R$ as $R$-modules and $n$ a positive integer. 
Then there are infinitely many primes $\fq \in \cP_n(\cF)$ 
such that $R \cdot {\rm loc}_\fq(c) = H^1_f(K_\fq, T)$. 
\end{lemma}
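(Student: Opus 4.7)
The plan is to reduce the assertion, by Nakayama's lemma, to a single nonvanishing of a localization that can be handled by Proposition \ref{chevotarev}. Since $R$ is artinian local, its socle $R[\fm_R]$ is nonzero, so fix any nonzero $x \in R[\fm_R]$. The hypothesis $Rc \simeq R$ forces $\mathrm{Ann}_R(c) = 0$, so the class $xc \in H^1(K,T)$ is nonzero.

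Next, apply Proposition \ref{chevotarev} to the nonzero class $xc$ (together with an arbitrary nonzero $c^* \in H^1(K, T^*(1))$, if one exists; if not, the same Chebotarev argument applied without $c^*$ suffices, as the $c^*$-condition is an extra constraint rather than a prerequisite for the $c$-part). This produces a set $\cQ \subseteq \cP_n(\cF)$ of positive density, and in particular infinite, such that ${\rm loc}_\fq(xc) \neq 0$ for every $\fq \in \cQ$.

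For each such $\fq$, since $\fq \not\in \Sigma(\cF)$, the module $T$ is unramified at $\fq$, so ${\rm loc}_\fq(c) \in H^1_{\rm ur}(K_\fq, T) = H^1_f(K_\fq, T)$. By Lemma \ref{local}, $H^1_f(K_\fq, T)$ is a free $R$-module of rank one; after fixing an $R$-linear isomorphism $H^1_f(K_\fq, T) \simeq R$, write ${\rm loc}_\fq(c) = r_\fq \in R$. Then $x r_\fq = {\rm loc}_\fq(xc) \neq 0$, so $r_\fq$ cannot lie in $\fm_R$, since otherwise $x r_\fq$ would lie in $\fm_R \cdot R[\fm_R] = 0$. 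Hence $r_\fq$ is a unit in $R$ and $R \cdot {\rm loc}_\fq(c) = H^1_f(K_\fq, T)$, as desired. The main point to check is the Chebotarev step, and I expect it to go through directly by Proposition \ref{chevotarev} once the nonvanishing of $xc$ has been established; the rest of the argument is essentially the socle trick together with Nakayama's lemma.
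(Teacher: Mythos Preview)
Your argument follows the same line as the paper's: both pick a nonzero socle element $x$ (equivalently, an $x$ with $\mathrm{Ann}_R(x) = \fm_R$), note $xc \neq 0$, invoke Proposition~\ref{chevotarev}, and then use $H^1_f(K_\fq,T) \simeq R$ to upgrade nonvanishing of $x\cdot\mathrm{loc}_\fq(c)$ to $R\cdot\mathrm{loc}_\fq(c) = H^1_f(K_\fq,T)$.

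Two small points need adjustment. First, the implication ``$T$ is unramified at $\fq$, so $\mathrm{loc}_\fq(c) \in H^1_{\rm ur}(K_\fq,T)$'' is false as written: unramifiedness of the module $T$ at $\fq$ does not force the global \emph{class} $c$ to be unramified there. The correct statement, which the paper invokes, is that $\mathrm{loc}_\fq(c) \in H^1_f(K_\fq,T)$ for all but finitely many primes $\fq$; discarding these finitely many exceptions from your $\cQ$ still leaves an infinite set. Second, Lemma~\ref{local} is stated for $\fq \in \cP_{L_R(R)}(\cF)$, so if the given $n$ is smaller than $L_R(R)$ you cannot directly cite it for $\fq \in \cP_n(\cF)$. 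The fix is to replace $n$ by $n' = \max(n, L_R(R)+1)$ and work in $\cP_{n'}(\cF) \subseteq \cP_n(\cF)$; the paper handles this by opening with ``we may assume that $n > L_R(R)$.''
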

\begin{proof}
We may assume that $n > L_R(R)$. 
Note that ${\rm loc}_\fq(c) \in H_{f}^1(K_\fq, T)$ for all but finitely many primes $\fq$ of $K$ and 
that $H^1_f(K_\fq, T)$ is free of rank one for any prime $\fq \in \cP_n (\cF)$ by Lemma \ref{local}. 
Since $R$ is an artinian local ring, there is an element $x \in R$ such that $\fm_R = {\rm Ann}_{R}(x)$. 
Put $c' = xc$. 
Since $Rc \simeq R$ as $R$-modules, the element $c'$ is non-zero. 
Hence by Proposition \ref{chevotarev}, there are infinitely many primes $\fq \in \cP_n(\cF)$ 
such that ${\rm loc}_\fq (c') \neq 0$ and ${\rm loc}_\fq (c) \in H^1_f (K_\fq, T)$. 
Since $\fm_R = {\rm Ann}_R(x)$, 
and $H^1_{f}(K_\fq, T) \simeq R$ as $R$-modules, we have $R \cdot {\rm loc}_\fq(c) = H^1_f(K_\fq, T)$. 
\end{proof}

\begin{lemma}\label{basis}
Suppose that $R$ is an artinian local ring and $T$ satisfies Hypothesis \ref{standardhyp}. 
Let $M$ be a free $R$-submodule of $H^{1}(K, T)$ of rank $s \geq 0$ and $R \to A$ a ring homomorphism.  
Then the composition of the maps 
\begin{align*}
M \otimes_R A \to H^{1}(K, T) \otimes_R A \to H^{1}(K, T \otimes_R A)
\end{align*}
is a split injection. 
Here $T \otimes_R A$ is equipped with the discrete topology. 
\end{lemma}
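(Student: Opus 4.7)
The plan is to construct an explicit $A$-linear retraction of the map $M \otimes_R A \to H^1(K, T \otimes_R A)$ via localization at carefully chosen primes.

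First I would find primes $\fq_1, \ldots, \fq_s \in \cP_n(\cF)$, with $n = L_R(R)$, such that the combined localization
\[
\Lambda \colon M \to \bigoplus_{j=1}^s H^1_f(K_{\fq_j}, T), \quad c \mapsto ({\rm loc}_{\fq_j}(c))_j,
\]
is an $R$-module isomorphism. This is proved by induction on $s$. Choose a basis $c_1, \ldots, c_s$ of $M$, apply Lemma \ref{easy} to $c_s$ to produce $\fq_s \in \cP_n(\cF)$ with $R \cdot {\rm loc}_{\fq_s}(c_s) = H^1_f(K_{\fq_s}, T)$, and since each individual class in $H^1(K, T)$ is unramified outside a finite set of primes, further insist that ${\rm loc}_{\fq_s}(c_i) \in H^1_f(K_{\fq_s}, T)$ for every $i$. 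Subtracting appropriate $R$-multiples of $c_s$ from $c_1, \ldots, c_{s-1}$ places these new basis vectors in $\ker({\rm loc}_{\fq_s})$, and the inductive hypothesis---applied with the added constraint that each chosen $\fq_j$ also leaves $c_s$ unramified---yields $\fq_1, \ldots, \fq_{s-1}$. In the modified basis the matrix of $\Lambda$ is block upper triangular with a unit on the diagonal, hence invertible.

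Next I would build the splitting. Since $H^1_{\rm ur}(K_{\fq_j}, -)$ on a module unramified at $\fq_j$ is the cokernel of ${\rm Fr}_{\fq_j} - 1$, it commutes with $\otimes_R A$; combining with $\Lambda$ yields canonical isomorphisms
\[
M \otimes_R A \xrightarrow{\sim} \bigoplus_j H^1_f(K_{\fq_j}, T) \otimes_R A \xrightarrow{\sim} \bigoplus_j H^1_{\rm ur}(K_{\fq_j}, T \otimes_R A).
\]
Assuming the direct sum decomposition $H^1(K_{\fq_j}, T \otimes_R A) = H^1_{\rm ur}(K_{\fq_j}, T \otimes_R A) \oplus H^1_{\rm tr}(K_{\fq_j}, T \otimes_R A)$ (an analogue of Lemma \ref{local} for $T \otimes_R A$), let $\pi_j$ be the projection onto the unramified summand and define
\[
\sigma \colon H^1(K, T \otimes_R A) \xrightarrow{({\rm loc}_{\fq_j})_j} \bigoplus_j H^1(K_{\fq_j}, T \otimes_R A) \xrightarrow{(\pi_j)_j} \bigoplus_j H^1_{\rm ur}(K_{\fq_j}, T \otimes_R A) \xrightarrow{\sim} M \otimes_R A.
\]
For any $c \in M$, the class ${\rm loc}_{\fq_j}(c)$ lies in $H^1_{\rm ur}(K_{\fq_j}, T)$ by construction, so its image in $H^1(K_{\fq_j}, T \otimes_R A)$ lies in $H^1_{\rm ur}(K_{\fq_j}, T \otimes_R A)$, on which $\pi_j$ acts as the identity. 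The composition $\sigma \circ (M \otimes_R A \to H^1(K, T \otimes_R A))$ therefore equals $\Lambda \otimes_R {\rm id}_A$, an isomorphism, verifying that $\sigma$ is the desired left inverse.

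The hard part will be establishing the direct sum decomposition $H^1(K_{\fq_j}, T \otimes_R A) = H^1_{\rm ur} \oplus H^1_{\rm tr}$ for an arbitrary $R$-algebra $A$, since Lemma \ref{local} only covers the case $A = R/I$. Nevertheless, the ingredients driving Lemma \ref{local}---that $T \otimes_R A$ is $p$-power torsion (because $R$ is artinian with residue characteristic $p$) and unramified at $\fq_j$, and that $K(\fq_j)_{\fq_j}/K_{\fq_j}$ is a tame ramified $p$-extension whose structure is independent of the coefficient ring---all transfer from $T$. An inflation-restriction argument parallel to the proof of Lemma \ref{local} should therefore yield the decomposition and complete the construction of $\sigma$.
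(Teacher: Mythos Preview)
Your proposal is correct and follows the same approach as the paper: choose primes via Lemma \ref{easy} so that localization gives an isomorphism $M \simeq \bigoplus_i H^1_f(K_{\fq_i}, T)$, then base-change using $H^1_f(K_{\fq_i}, T) \otimes_R A \simeq H^1_f(K_{\fq_i}, T \otimes_R A)$ to obtain the retraction. The paper's proof is terser---it neither spells out the inductive choice of the $\fq_i$ nor explicitly invokes the decomposition $H^1 = H^1_{\rm ur} \oplus H^1_{\rm tr}$ over $A$ that you correctly flag as needed for the splitting---so your careful treatment of these points is an improvement rather than a genuine divergence.
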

\begin{proof}
Set $n = L_R(R)$. 
By Lemma \ref{easy}, there are primes $\fq_{1}, \cdots, \fq_{s} \in \cP_n(\cF)$ such that 
the sum of localization maps 
$\bigoplus_{i=1}^{s}{\rm loc}_{\fq_{i}} \colon H^1(K, T) \to \bigoplus_{i=1}^{s}H^{1}(K_{\fq_i}, T)$ 
induces an isomorphism $M \simeq \bigoplus_{i=1}^{s}H^{1}_{f}(K_{\fq}, T)$. 
Let $1 \leq i \leq s$. 
Since $T$ is unramified at $\fq_i$, so is $T \otimes_R A$, which implies that 
\begin{align*}
H^{1}_{f}(K_{\fq_i}, T) \otimes_R A &\simeq \left( T/({\rm Fr}_{\fq_i}-1)T \right) \otimes_R A 
\\
&\simeq (T \otimes_R A)/ \left( ({\rm Fr}_{\fq_i} - 1)(T \otimes_R A) \right)
\\
&\simeq H^1_f(K_{\fq_i}, T \otimes_R A). 
\end{align*}
Therefore the map $M \otimes_R A \to \bigoplus_{i=1}^{s}H^{1}_{f}(K_{\fq}, T \otimes_R A)$ 
is an isomorphism. 
\end{proof}

\section{Stark Systems over Zero Dimensional Gorenstein Local Rings}

In this section, we define a Stark system over a zero dimensional Gorenstein local ring. 
Under some mild assumptions, 
we will prove that the module of Stark systems is free of rank one and 
control all the Fitting ideals of the Pontryagin dual of dual Selmer groups using Stark systems. 
Furthermore, we will show that these results generalize the works of B. Mazur and K. Rubin 
in \cite{MR1} and \cite{MR2}. 

Throughtout this section, we assume that $R$ is a zero dimensional Gorenstrein local ring and 
that $\cF$ is a Selmer structure on $T$. 
Suppose that $T$ satisfies Hypothesis \ref{standardhyp}. 

For simplicity, we write $\cP = \cP_{L_R(R)}(\cF)$ and $\cN = \cN(\cP_{L_R(R)}(\cF))$. 
Let $\nu(\fn)$ denote the number of prime factors of $\fn \in \cN$. 

\subsection{Stark Systems over Zero Dimensional Gorenstein Local Rings}

Let $r$ be a non-negative integer and $\fn \in \cN$. Define 
$W_{\fn} :=  \bigoplus_{\fq \mid \fn} H^{1}_{/f}(K_{\fq}, T)^* 
= \bigoplus_{\fq \mid \fn} \Hom_R \left( H^{1}_{/f}(K_{\fq}, T), R \right)$ and 
\begin{align*} 
X_{\fn}(T, \cF) := \bigcap^{r + \nu(\fn)}_R H^{1}_{\cF^{\fn}}(K, T) \otimes_R 
{\rm det}(W_{\fn}). 
\end{align*}
If there is no risk of confusion, we abbreviate $X_{\fn}(T, \cF)$ to $X_{\fn}$. 
Let $\fm \mid \fn$ and $1 \leq i \leq \nu(\fn\fm^{-1})$. 
Order the primes $\fq_1, \ldots, \fq_{\nu(\fn\fm^{-1})}$ dividing $\fn\fm^{-1}$. 
We have the following cartesian diagram: 
\begin{align*}
\xymatrix{
H^{1}_{\cF^{\fm \fq_1 \cdots \fq_{i-1}}}(K, T) \ar[d] \ar@{^{(}->}[r] 
& H^{1}_{\cF^{\fm \fq_1 \cdots \fq_i}}(K, T) \ar[d] 
\\
\bigoplus_{\fq \mid \fm \fq_1 \cdots \fq_{i-1}} H^{1}_{/f}(K_{\fq}, T) \ar@{^{(}->}[r] & 
\bigoplus_{\fq \mid \fm \fq_1 \cdots \fq_{i}} H^{1}_{/f}(K_{\fq}, T). 
}
\end{align*}
By Definition \ref{map}, we obtain a map 
$\Phi_{\fm \fq_1 \cdots \fq_{i}, \fm \fq_1 \cdots \fq_{i-1}} 
\colon X_{\fm \fq_1 \cdots \fq_i} \to X_{\fm \fq_1 \cdots \fq_{i-1}}$ 
for each $1 \leq i \leq \nu(\fn\fm^{-1})$. 
Then we define a map 
\begin{align*}
\Phi_{\fn, \fm} \colon X_{\fn} \to X_\fm
\end{align*} 
by $\Phi_{\fn, \fm} = \Phi_{\fm\fq_1, \fm} \circ \cdots \circ \Phi_{\fn, \fn\fq_{\nu(\fn\fm^{-1})}^{-1}}$. 

\begin{lemma}\label{compatible}
The map $\Phi_{\fn, \fm}$ is independent of the choice of the ordering 
$\fn\fm^{-1} = \fq_1 \cdots \fq_{\nu(\fn\fm^{-1})}$. 
Furthermore, we have $\Phi_{\fn_{1}, \fn_{3}} = \Phi_{\fn_{2}, \fn_{3}} \circ \Phi_{\fn_{1}, \fn_{2}}$ 
for any $\fn_{1} \in \cN$, $\fn_{2} \mid \fn_{1}$, and $\fn_{3} \mid \fn_{2}$. 
\end{lemma}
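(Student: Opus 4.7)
The plan is to prove the two assertions in sequence: first, independence of the ordering, and second, the composition property, which will follow almost formally once the first is established.

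For ordering independence, since adjacent transpositions generate the symmetric group on $\nu(\fn\fm^{-1})$ letters, I would reduce the claim to the case of swapping two adjacent primes $\fq_i$ and $\fq_{i+1}$ in the chosen ordering. With the rest of the ordering held fixed, the only segment of the iterated composition that changes is the two-step piece from $X_{\fm\fq_1\cdots\fq_{i+1}}$ down to $X_{\fm\fq_1\cdots\fq_{i-1}}$, passing through either $X_{\fm\fq_1\cdots\fq_i}$ in the original ordering or $X_{\fm\fq_1\cdots\fq_{i-1}\fq_{i+1}}$ in the swapped one. Each of these intermediates fits into a three-level ladder whose squares are cartesian: this follows from the fact that, for any $\fm' \mid \fn$, the subgroup $H^1_{\cF^{\fm'}}(K, T)$ is by definition the preimage in $H^1_{\cF^{\fn}}(K, T)$ of $\bigoplus_{\fq \mid \fm'} H^1_{/f}(K_\fq, T)$. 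Applying Proposition \ref{homlem} to each such ladder with $s_2-s_1 = s_3-s_2 = 1$, both two-step compositions are identified with $(-1)^{1\cdot 1}$ times the single map provided by Definition \ref{map} for the outer cartesian square, which depends only on the unordered set of primes removed. Hence the two orderings produce the same composition.

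For the composition property, once ordering independence is in hand, I would choose the ordering of the prime factors of $\fn_1\fn_3^{-1}$ that lists the $\nu(\fn_2\fn_3^{-1})$ primes dividing $\fn_2\fn_3^{-1}$ first and the $\nu(\fn_1\fn_2^{-1})$ primes dividing $\fn_1\fn_2^{-1}$ afterwards. Unwinding the definition, the iterated composition defining $\Phi_{\fn_1, \fn_3}$ then literally factors through $X_{\fn_2}$, with the first $\nu(\fn_1\fn_2^{-1})$ one-prime steps assembling into $\Phi_{\fn_1, \fn_2}$ and the remaining $\nu(\fn_2\fn_3^{-1})$ steps into $\Phi_{\fn_2, \fn_3}$, which gives the desired identity. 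The only delicate point is the sign bookkeeping in the ordering argument, but because both applications of Proposition \ref{homlem} there always come with the same sign $(-1)^{1\cdot 1}$, the signs cancel and no real obstacle arises.
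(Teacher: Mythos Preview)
Your proposal is correct and follows the same approach as the paper, which simply records that the lemma follows from Proposition~\ref{homlem}. You have supplied the details the paper omits: the reduction to adjacent transpositions, the observation that both two-step compositions equal $(-1)^{1\cdot 1}\Phi_{31}$ for the \emph{same} outer map $\Phi_{31}$, and the factorization of $\Phi_{\fn_1,\fn_3}$ through $X_{\fn_2}$ via a compatible ordering once independence is known.
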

\begin{proof}
This lemma follows from Proposition \ref{homlem}. 
\end{proof}

\begin{remark}\label{sign}
Let $\fn \in \cN$ and $\fm \mid \fn$. 
Applying Lemma \ref{base} to the exact sequence 
\begin{align*}
W_{\fn\fm^{-1}} \to H^1_{\cF^\fn}(K, T)^* \to H^1_{\cF^\fm}(K, T)^* \to 0, 
\end{align*}
we get a map $\widetilde{\Phi}_{\fn, \fm} \colon 
\bigcap^{r + \nu(\fn)}_R H^1_{\cF^\fn}(K, T) \otimes_R \det(W_{\fn\fm^{-1}}) \to 
\bigcap^{r + \nu(\fm)}_R H^1_{\cF^\fm}(K, T)$. 
By Proposition \ref{homlem}, we see that 
\begin{align*}
\Phi_{\fn, \fm} = 
(-1)^{\mu(\fn\fm^{-1})} 
\widetilde{\Phi}_{\fn, \fm} \otimes {\rm id}_{\det(W_{\fm})} 
\end{align*}
where $\mu(\fn\fm^{-1}) = \sum_{i=1}^{\nu(\fn\fm^{-1})-1}i$. 
\end{remark}

\begin{definition}
Let $\cQ$ be a subset of $\cP$. 
We define the module ${\bf SS}_{r}(T, \cF, \cQ)$ of Stark systems of rank $r$ for $(T, \cF, \cQ)$ by 
\begin{align*}
{\bf SS}_{r}(T, \cF, \cQ) := \varprojlim_{\fn \in \cN(\cQ)} X_{\fn}
\end{align*}
where the inverse limit is taken with respect to the maps $\Phi_{\fn, \fm}$. 
We call an element of ${\bf SS}_{r}(T, \cF, \cQ)$ a Stark system of rank $r$ 
associated with the tuple $(T, \cF, \cQ)$. 
\end{definition}

\begin{definition}
We say that $\fn \in \cN$ is a core vertex (for $\cF$) if 
$H^{1}_{(\cF^{\fn})^{*}}(K, T^{*}(1)) = H^{1}_{\cF_{\fn}^{*}}(K, T^{*}(1)) = 0$. 
\end{definition}

\begin{remark}\label{corerem}
\begin{itemize}
\item[(1)] By Lemma \ref{chevotarev}, 
the set $\cN(\cP_n(\cF))$ has a core vertex for any positive integer $n$. 
\item[(2)] If $\fq \in \cP$ and $\fn \in \cN(\cP \setminus \{ \fq \})$ is a core vertex, 
then $\fn\fq \in \cN$ is also a core vertex. 
\item[(3)] Let $\fn \in \cN$ be a core vertex, $S$ a zero dimensional Gorenstein local ring, 
and $\pi \colon R \to S$ a surjective ring homomorphism. 
Then $\fn$ is also a core vertex for $\cF_S$ by Lemma \ref{dmult}. 
\end{itemize}
\end{remark}

\begin{lemma}\label{freelemma}
Suppose that the Selmer structure $\cF$ on $T$ is cartesian. 
If $\fn \in \cN$ is a core vertex, 
then the $R$-module $H^1_{\cF^\fn}(K, T)$ is free of rank $\chi(\cF) + \nu(\fn)$. 
\end{lemma}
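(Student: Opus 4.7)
The plan is to establish both an upper and a lower bound on the length of $M := H^1_{\cF^\fn}(K, T)$ and to combine them with a socle computation to force freeness of rank $d := \chi(\cF) + \nu(\fn)$.

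For the setup and upper bound: By Corollary \ref{cartesian}, $\cF^\fn$ is cartesian, and by Corollary \ref{corerank}, $\chi(\cF^\fn) = d$. Since $\fn$ is a core vertex, $H^1_{(\cF^\fn)^*}(K, T^*(1)) = 0$, so Lemma \ref{dmult} yields $H^1_{(\cF^\fn)^*}(K, T^*(1)[\fm_R]) = 0$. Combined with Lemma \ref{mult}(2), this gives $\dim_\Bbbk M[\fm_R] = \dim_\Bbbk H^1_{\cF^\fn}(K, T/\fm_R T) = d$. By Matlis duality for zero-dimensional Gorenstein rings, the natural isomorphism $(M/\fm_R M)^* \simeq M^*[\fm_R]$ and reflexivity give $\dim_\Bbbk M^*/\fm_R M^* = \dim_\Bbbk M[\fm_R] = d$; Nakayama's lemma then produces a surjection $R^d \twoheadrightarrow M^*$, which dualizes (using $M \simeq M^{**}$) to an injection $M \hookrightarrow R^d$, so $L_R(M) \leq d \cdot L_R(R)$.

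For the lower bound I would induct on $L_R(R)$. The base case $R = \Bbbk$ is immediate since $M = M[\fm_R]$ has $\Bbbk$-dimension $d$. For the inductive step, let $x$ generate the socle ideal $I := R[\fm_R]$ and consider the short exact sequence $0 \to T/\fm_R T \xrightarrow{x} T \to T/IT \to 0$ of $R[G_K]$-modules. By Lemma \ref{cartesiansurj} and Remark \ref{corerem}(3), the inductive hypothesis applies to $R/I$, so $H^1_{\cF^\fn}(K, T/IT)$ is free of rank $d$ over $R/I$ and of length $d \cdot (L_R(R) - 1)$. The crucial claim is that the induced Selmer sequence
\begin{align*}
0 \to H^1_{\cF^\fn}(K, T/\fm_R T) \to M \to H^1_{\cF^\fn}(K, T/IT) \to 0
\end{align*}
is exact; granted this, additivity of length yields $L_R(M) = d + d(L_R(R) - 1) = d \cdot L_R(R)$, which combined with the upper bound forces the injection $M \hookrightarrow R^d$ to be an isomorphism.

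The hard part will be proving exactness of this Selmer sequence, and in particular the surjectivity of the right-hand map. The injectivity of the left map and middle exactness follow from Lemma \ref{mult} together with the observation that $\ker(M \to H^1(K, T/IT)) = M \cap H^1(K, T)[\fm_R] = M[\fm_R]$, which is identified with $H^1_{\cF^\fn}(K, T/\fm_R T)$ via the cartesian condition. For surjectivity, given $\bar{c} \in H^1_{\cF^\fn}(K, T/IT)$, one first lifts to a class $c \in H^1(K, T)$ (the obstruction lives in $H^2(K, T/\fm_R T)$) and then adjusts $c$ by a class in the image of $H^1(K, T/\fm_R T) \hookrightarrow H^1(K, T)$ to enforce the Selmer condition at each $\fq \in \Sigma(\cF^\fn)$. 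The cartesian condition at these primes, together with Lemma \ref{local} at primes $\fq \in \cP$, allows such an adjustment; global control of the lifting obstruction is handled by a local-global duality argument in the spirit of the proof of \cite[Lemma 3.5.5]{MR1}.
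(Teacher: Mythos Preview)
Your upper bound argument is correct, but the inductive lower bound has a structural gap that cannot be repaired: for a zero-dimensional Gorenstein local ring $R$ with socle $I = R[\fm_R]$, the quotient $R/I$ is almost never Gorenstein once $R$ is not principal. Take $R = \Bbbk[x,y]/(x^2,y^2)$ (or, in the paper's own setting, $\Lambda/(p^2,X^2)$): the socle is $(xy)$, and $R/(xy) \cong \Bbbk[x,y]/(x^2,xy,y^2)$ has two-dimensional socle $(x,y)$, hence is not Gorenstein. Thus Lemma~\ref{cartesiansurj} and Remark~\ref{corerem}(3) do not apply to $R/I$, the Selmer structure $\cF_{R/I}$ need not be cartesian, and your inductive hypothesis is simply unavailable. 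There is in general no chain of Gorenstein quotients of $R$ with successive kernels isomorphic to $\Bbbk$, which is exactly why the paper does not attempt any induction on $L_R(R)$. Even setting this aside, your surjectivity sketch for $M \to H^1_{\cF^\fn}(K,T/IT)$ leaves both the global $H^2$-obstruction to lifting and the local adjustment step unjustified; the appeal to \cite[Lemma~3.5.5]{MR1} does not close this, as that argument relies on the principal-ideal structure.

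The paper's proof avoids all of this by working entirely over the fixed ring $R$. It chooses auxiliary ideals $\fa,\fm \in \cN$ coprime to $\fn$ (the latter via \cite[Theorem~11.6]{MR2}) so that $H^1_{\cF_\fa(\fm)}(K,T)$ and its dual Selmer group both vanish; Poitou--Tate (Theorem~\ref{pt}) then identifies $H^1_{\cF^{\fm\fn}}(K,T)$ with an explicit direct sum of the free rank-one local modules of Lemma~\ref{local}, and a second application of Theorem~\ref{pt} realizes $H^1_{\cF^\fn}(K,T)$ as the kernel of a surjection from this free module onto another free module. Freeness of the correct rank follows at once, with no induction and no control of $H^2$ required.
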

\begin{proof}
Let $\fn \in \cN$ be a core vertex. 
We take an ideal $\fa \in \cN$ such that $(\fa, \fn) = 1$ and $\nu(\fa) = \chi(\cF)$. 
By Corollary \ref{corerank}, we have $\chi(\cF_\fa) = \chi(\cF) - \nu(\fa) = 0$. 
Then by \cite[Theorem 11.6]{MR2}, there is an ideal $\fm \in \cN$ such that $(\fa\fn, \fm) = 1$ and 
$H^1_{\cF_\fa(\fm)^*}(K, T^*(1)[\fm_R]) = 0$. 
Then we have 
\begin{align*}
\dim_\Bbbk H^1_{\cF_\fa(\fm)}(K, T/\fm_R T) = \chi(\cF_\fa(\fm)) = \chi(\cF_\fa) = 0 
\end{align*}
where the first equality follows from $H^1_{\cF_\fa(\fm)^*}(K, T^*(1)[\fm_R]) = 0$ and 
the second equality follows from Corollary \ref{corerank}. 
Since $\cF$ is cartesian, it follows from Lemma \ref{mult}, Lemma \ref{dmult}, and Corollary \ref{cartesian} that 
\begin{align*}
H^1_{\cF_\fa(\fm)}(K, T) = H^1_{\cF_\fa(\fm)^*}(K, T^*(1)) = 0. 
\end{align*}
Applying Theorem \ref{pt} with $\cF_1 = \cF_\fa(\fm)$ and $\cF_2 = \cF^{\fm\fn}$, we see that 
the canonical map 
\begin{align*}
H^1_{\cF^{\fm\fn}}(K, T) \to \bigoplus_{\fq \mid \fa} H^1_f(K_\fq, T) \oplus 
\bigoplus_{\fq \mid \fm}H^1(K_\fq, T)/H^1_{\rm tr}(K_\fq, T) 
\oplus \bigoplus_{\fq \mid \fn}H^1_{/f}(K_\fq, T)
\end{align*}
is an isomorphism. Hence by Lemma \ref{local}, $H^1_{\cF^{\fm\fn}}(K, T)$ is a free $R$-module of rank 
$\chi(\cF) + \nu(\fm) + \nu(\fn)$. 
Since $\fn$ is a core vertex, by Theorem \ref{pt}, we have an exact sequence 
\begin{align*}
0 \to H^1_{\cF^\fn}(K, T) \to H^1_{\cF^{\fm\fn}}(K, T) \to \bigoplus_{\fq \mid \fm} H^1_{/f}(K_\fq, T) \to 0. 
\end{align*}
Again by Lemma \ref{local}, $\bigoplus_{\fq \mid \fm} H^1_{/f}(K, T)$ is a free $R$-module of rank $\nu(\fm)$, 
which implies that 
$H^1_{\cF^\fn}(K, T)$ is free of rank $\chi(\cF) + \nu(\fn)$. 
\end{proof}

\begin{theorem}\label{core}
Suppose that $\cF$ is cartesian and $r = \chi(\cF) \geq 0$. 
Let $\cQ$ be a subset of $\cP$. 
If the set $\cN(\cQ)$ has a core vertex, then the projection map 
\begin{align*}
{\bf SS}_r(T, \cF, \cQ) \to X_{\fn}(T, \cF)
\end{align*}
is an isomorphism for any core vertex $\fn \in \cN(\cQ)$. 
In particular, the module ${\bf SS}_{r}(T, \cF, \cQ)$ is a free $R$-module of rank one. 
\end{theorem}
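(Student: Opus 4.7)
My plan is to show that once the indexing ideal has absorbed a core vertex $\fn$, every further transition map in the inverse system becomes an isomorphism, so the inverse limit ${\bf SS}_r(T, \cF, \cQ)$ collapses onto $X_\fn$. By Remark \ref{corerem}(2), multiples of a core vertex in $\cN(\cQ)$ are again core vertices, so the subset $\{\fn' \in \cN(\cQ) : \fn \mid \fn'\}$ is cofinal in $\cN(\cQ)$ and consists entirely of core vertices. Using Lemma \ref{compatible} to factor transition maps and induction on $\nu(\fn'\fn^{-1})$, it therefore suffices to verify that for any core vertex $\fn \in \cN(\cQ)$ and any $\fq \in \cQ$ coprime to $\fn$, the transition map $\Phi_{\fn\fq, \fn} \colon X_{\fn\fq} \to X_\fn$ is an isomorphism.

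For this, I would first invoke Lemma \ref{freelemma} to see that $H^1_{\cF^\fn}(K, T)$ and $H^1_{\cF^{\fn\fq}}(K, T)$ are free of ranks $r + \nu(\fn)$ and $r + \nu(\fn) + 1$ respectively. Consequently the exterior bidual coincides with the top exterior power on each, and both $X_\fn$ and $X_{\fn\fq}$ are free $R$-modules of rank one. Next, applying Theorem \ref{pt} with $\cF_1 = \cF^\fn$ and $\cF_2 = \cF^{\fn\fq}$, together with the vanishing of the two dual Selmer groups at the core vertices $\fn$ and $\fn\fq$, yields
\begin{align*}
0 \to H^1_{\cF^\fn}(K, T) \to H^1_{\cF^{\fn\fq}}(K, T) \to H^1_{/f}(K_\fq, T) \to 0.
\end{align*}
Because $H^1_{/f}(K_\fq, T)$ is free of rank one by Lemma \ref{local}, this short exact sequence splits; moreover, the induced map on vertical cokernels in the cartesian square defining $\Phi_{\fn\fq, \fn}$ is an isomorphism $H^1_{\cF^{\fn\fq}}/H^1_{\cF^\fn} \xrightarrow{\sim} F_2/F_1 = H^1_{/f}(K_\fq, T)$.

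Unwinding Definition \ref{map}, $\Phi_{\fn\fq, \fn}$ is, after the canonical decomposition $\det(F_2^*) \simeq \det((F_2/F_1)^*) \otimes \det(F_1^*)$, the dual of the map $\phi$ of Lemma \ref{base} attached to the exact sequence $H^1_{/f}(K_\fq, T)^* \to H^1_{\cF^{\fn\fq}}(K, T)^* \to H^1_{\cF^\fn}(K, T)^* \to 0$. Under the splitting together with the isomorphism on cokernels above, taking a basis of $H^1_{\cF^\fn}(K, T)^*$ and extending it by a lift of the image of the generator of $H^1_{/f}(K_\fq, T)^*$ shows that $\phi$ sends a generator of its free rank-one source to a generator of its free rank-one target. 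Hence $\phi$ is an isomorphism, so $\Phi_{\fn\fq, \fn}$ is too.

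Putting the pieces together, all transition maps within the cofinal subsystem of core vertices are isomorphisms, so the projection ${\bf SS}_r(T, \cF, \cQ) \to X_\fn$ is an isomorphism of free rank-one $R$-modules for each core vertex $\fn \in \cN(\cQ)$. The main obstacle is the careful unwinding in the third paragraph: one must check that, under the splitting of the short exact sequence of free modules, the construction of Lemma \ref{base} produces the expected wedge isomorphism rather than a nontrivial scalar, and that the decomposition of $\det(F_2^*)$ intertwines compatibly with it; Proposition \ref{homlem} and Remark \ref{sign} ensure no hidden sign or scaling issues remain.
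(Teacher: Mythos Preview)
Your proposal is correct and follows essentially the same approach as the paper: reduce to showing that $\Phi_{\fn\fq,\fn}$ is an isomorphism whenever $\fn$ is a core vertex, then use Poitou--Tate (Theorem \ref{pt}) together with Lemma \ref{freelemma} to obtain a short exact sequence of free modules whose dual, via Lemma \ref{base}, yields an isomorphism on top exterior powers. The paper is simply terser---it leaves the cofinality reduction implicit in the phrase ``we only need to show'' and records the isomorphism of the Lemma \ref{base} map directly from the ranks, without explicitly choosing a splitting or basis as you do in your third paragraph.
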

\begin{proof}
Let $\fn \in \cN(\cQ)$ be a core vertex. 
We only need to show that 
the map $\Phi_{\fn\fq, \fn}$ is an isomorphism for any prime $\fq \in \cQ$ with $\fq \nmid \fn$. 
Note that $\fn\fq$ is also a core vertex. 
By Theorem \ref{pt} and Lemma \ref{freelemma}, we have the following exact sequence of free $R$-modules: 
\begin{align*}
0 \to H^{1}_{/f}(K_{\fq}, T)^* \to H^{1}_{\cF^{\fn\fq}}(K, T)^* \to H^{1}_{\cF^{\fn}}(K, T)^* \to 0.  
\end{align*}
Since ${\rm rank}_R \left( H^1_{\cF^{\fn\fq}}(K, T) \right) = r + \nu(\fn\fq)$ 
and ${\rm rank}_R \left( H^1_{\cF^\fn}(K, T) \right) = r + \nu(\fn)$, 
the map 
\begin{align*}
\bigwedge^{r+\nu(\fn)}_R H^1_{\cF^{\fn\fq}}(K,T)^* \otimes_R H^1_{/f}(K_\fq, T)^* \to \bigwedge^{r+\nu(\fn\fq)}_R H^1_{\cF^\fn}(K,T)^*
\end{align*}
defined by Lemma \ref{base} is an isomorphism, 
and thus so is $\Phi_{\fn\fq, \fn}$. 
\end{proof}


\begin{lemma}\label{keylem}
Let $s$ and $t$ be non-negative integers and 
$0 \to N \to R^{s+t} \to R^{s} \to M \to 0$ an exact sequence of $R$-modules. 
If $\phi \in \Hom_R \left(\bigwedge^r_R N^*, R \right)$ is the image of a basis of 
$\bigcap^{s+t}_R R^{s+t} \otimes_R \det\left( (R^{s})^* \right)$ under the map 
\begin{align*}
\bigcap^{s+t}_R R^{s+t} \otimes_R \det\left( (R^{s})^* \right) \to \bigcap^{t}_R N 
= \Hom_R \left(\bigwedge^r_R N^*, R \right)
\end{align*}
defined in Definition \ref{map}, then we have ${\rm im}(\phi) = {\rm Fitt}_{0, R}(M)$. 
\end{lemma}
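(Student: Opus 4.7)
The plan is to unwind the map of Definition \ref{map} in this explicit case and match its image with the description of ${\rm Fitt}_{0,R}(M)$ in terms of the maximal minors of a presentation matrix. First, I would read the statement as an instance of Definition \ref{map} with $M_2 = R^{s+t}$, $F_2 = R^s$, $M_1 = N$, $F_1 = 0$, where the middle map $f \colon R^{s+t} \to R^s$ of the given exact sequence serves as the right-hand vertical arrow. The square is tautologically cartesian because $N = \ker f$. Applying $(-)^*$ to $0 \to N \to R^{s+t} \xrightarrow{f} R^s \to M \to 0$, which is exact since $R$ is zero-dimensional Gorenstein, produces the exact sequence $(R^s)^* \xrightarrow{h} (R^{s+t})^* \to N^* \to 0$ that feeds into Lemma \ref{base} in the construction of $\Phi$.

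Next, I would compute $\phi$ on a standard generating set of $\bigwedge^t_R N^*$. Fix the standard bases $v_1, \ldots, v_{s+t}$ of $R^{s+t}$ and $e_1, \ldots, e_s$ of $R^s$, and let $A = (a_{ij})$ be the $s \times (s+t)$ matrix of $f$, so $h(e_i^*) = \sum_j a_{ij} v_j^*$. Unwinding the construction, $\phi$ is the dual, in the sense of Definition \ref{map}, of the Lemma \ref{base} map $\bigwedge^{t}_R N^* \otimes_R \det((R^s)^*) \to \bigwedge^{s+t}_R (R^{s+t})^*$, evaluated on the chosen basis $v_1 \wedge \cdots \wedge v_{s+t} \otimes (e_1^* \wedge \cdots \wedge e_s^*)$. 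For a multi-index $I = \{i_1 < \cdots < i_t\} \subseteq \{1,\ldots,s+t\}$ with complement $J$, taking $v_I^* = v_{i_1}^* \wedge \cdots \wedge v_{i_t}^*$ as a lift of $\bar v_I^* \in \bigwedge^t_R N^*$ yields
\[
\phi(\bar v_I^*) \;=\; \bigl\langle v_1 \wedge \cdots \wedge v_{s+t},\; v_I^* \wedge h(e_1^*) \wedge \cdots \wedge h(e_s^*) \bigr\rangle \;=\; \pm \det(A_{[s], J}),
\]
since in the expansion $h(e_1^*) \wedge \cdots \wedge h(e_s^*) = \sum_K \det(A_{[s], K})\, v_K^*$ only the summand $K = J$ survives after wedging with $v_I^*$.

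Finally, as $I$ ranges over the $t$-element subsets of $\{1,\ldots,s+t\}$, the classes $\bar v_I^*$ generate $\bigwedge^{t}_R N^*$, so ${\rm im}(\phi)$ equals the ideal of $R$ generated by all $s \times s$ minors of $A$; but $R^{s+t} \xrightarrow{A} R^s \to M \to 0$ is a presentation of $M$, and this ideal is by definition ${\rm Fitt}_{0,R}(M)$. The only points needing care are the exactness of the dualized sequence (which is where Gorensteinness enters) and the bookkeeping of the shuffle signs in the wedge expansion; both are mechanical, so once the identification with Definition \ref{map} is made the argument is routine.
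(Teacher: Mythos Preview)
Your proposal is correct and follows essentially the same approach as the paper: both dualize the given exact sequence to obtain $(R^s)^* \to (R^{s+t})^* \to N^* \to 0$, unwind the Lemma~\ref{base}/Definition~\ref{map} construction on standard bases, and identify the resulting values as $\pm$ the $s\times s$ minors of the presentation matrix $A$, which by definition generate ${\rm Fitt}_{0,R}(M)$. The paper phrases the computation via the auxiliary composite $\Psi$ obtained by precomposing with the surjection $\bigwedge^t_R (R^{s+t})^* \twoheadrightarrow \bigwedge^t_R N^*$, whereas you compute $\phi$ directly on lifted generators; these are the same calculation.
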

\begin{proof}
Applying Lemma \ref{base} to the exact sequence 
$(R^s)^* \to (R^{s+t})^* \to N^* \to 0$, we have a map 
\begin{align*}
\bigwedge^{t}_R N^* \otimes_R \det((R^s)^*) \to \bigwedge^{s+t}_R (R^{s+t})^* \simeq R 
\end{align*}
and ${\rm im}(\phi)$ is its image. 
The injective map $N \to R^{s+t}$ induces a surjective map 
$\bigwedge^{t}_R (R^{s+t})^* \otimes_R \det((R^s)^*) \to \bigwedge^{t}_R N^* \otimes_R \det((R^s)^*)$. 
Let 
\begin{align*}
\Psi \colon \bigwedge^{t}_R (R^{s+t})^* \otimes_R \det((R^s)^*) \to 
\bigwedge^{t}_R N^* \otimes_R \det((R^s)^*) \to R. 
\end{align*}
Then we have ${\rm im}(\phi) = {\rm im}(\Psi)$. 
Let $r_1, \ldots, r_{s+t}$ be the standard basis of $R^{s+t}$ and 
$r_1', \ldots, r_s'$ be the standard basis of $R^s$. 
We denote by $r_1^* \ldots, r_{s+t}^*$ (resp. $r_1'^*, \ldots, r_s'^*)$ the dual basis of 
$r_1 \ldots, r_{s+t}$ (resp. $r_1', \ldots, r_s')$. 
Let $A = (a_{ij})$ be the $s \times (s+t)$-matrix defined by the map $R^{s+t} \to R^{s}$. 
If $\{i_1 < \cdots < i_t \} \subseteq \{1, \ldots, s+t\}$, then we compute 
\begin{align*}
\Psi(r_{i_1}^* \wedge \cdots \wedge r_{i_t}^* \otimes r_1'^* \wedge \cdots \wedge r_s'^*) = 
\pm \det((a_{kj_\nu})_{k, \nu})
\end{align*}
where $\{j_{1} < \cdots < j_{s} \}$ is the complement of $\{i_1 < \cdots < i_t \}$ in $\{1, \ldots, s+t\}$. 
By the definition of Fitting ideals, we have ${\rm im}(\phi) = {\rm im}(\Psi) = {\rm Fitt}_{0, R} (M)$.  
\end{proof}

\begin{definition}
Let $\cQ$ be a subset of $\cP$ and $\epsilon \in {\bf SS}_{r}(T, \cF, \cQ)$. 
Fix an isomorphism $H^1_{/f}(K_\fq, T) \simeq R$ for each prime $\fq \in \cQ$. 
Using these isomorphisms, we have an isomorphism 
\begin{align*}
i_\fn \colon X_{\fn} \simeq \bigcap^{r+\nu(\fn)}_R H^1_{\cF^\fn}(K, T) 
= \Hom_R \left( \bigwedge^{r+\nu(\fn)}_R H^1_{\cF^\fn}(K, T)^*, R \right)
\end{align*}
for each ideal $\fn \in \cN(\cQ)$. 
Then we define an ideal $I_{i}(\epsilon) \subseteq R$ by 
\begin{align*}
I_i(\epsilon) := \sum_{\fn \in \cN(\cQ), \nu(\fn) = i} 
{\rm im}(i_\fn(\epsilon_{\fn})) 
\end{align*}
for any non-negative integer $i$. It is easy to see that the ideal $I_i(\epsilon)$ is 
independent of the choice of the isomorphisms $H^1_{/f}(K_\fq, T) \simeq R$. 
\end{definition}

\begin{theorem}\label{control}
Suppose that $\cF$ is cartesian and $r = \chi(\cF) \geq 0$. 
Let $\cQ$ be an infinite subset of $\cP$ such that $\cN(\cQ)$ has a core vertex.  
If $\epsilon = \{\epsilon_\fn\}_{\fn \in \cN(\cQ)}$ is a basis of ${\bf SS}_{r}(T, \cF, \cQ)$, 
then we have 
\begin{align*}
I_{i}(\epsilon) = {\rm Fitt}_{i, R}\left( H^{1}_{\cF^*}(K, T^*(1))^* \right)
\end{align*}
for any non-negative integer $i$. 
\end{theorem}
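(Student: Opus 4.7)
The plan is to compute each contribution ${\rm im}(i_\fn(\epsilon_\fn))$ to $I_i(\epsilon)$ as a zeroth Fitting ideal produced by Poitou--Tate, and then to sum these into ${\rm Fitt}_{i, R}(M)$, where $M := H^1_{\cF^*}(K, T^*(1))^*$, by means of a Chebotarev density argument.

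For the first step, given $\fn \in \cN(\cQ)$, the squarefree ideal $\fm := {\rm lcm}(\fm_0, \fn)$, where $\fm_0$ is the core vertex supplied by hypothesis, still lies in $\cN(\cQ)$ and is again a core vertex by repeated application of Remark \ref{corerem}(2). Theorem \ref{pt} applied with $\cF_1 = \cF^\fn$ and $\cF_2 = \cF^\fm$, together with $H^1_{(\cF^\fm)^*}(K, T^*(1)) = 0$, then produces the four-term exact sequence
\begin{align*}
0 \to H^1_{\cF^\fn}(K, T) \to H^1_{\cF^\fm}(K, T) \to \bigoplus_{\fq \mid \fm\fn^{-1}} H^1_{/f}(K_\fq, T) \to H^1_{(\cF^\fn)^*}(K, T^*(1))^* \to 0,
\end{align*}
whose two middle terms are free $R$-modules by Lemma \ref{freelemma} and Lemma \ref{local}. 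This puts us in the setting of Lemma \ref{keylem} with $s = \nu(\fm) - \nu(\fn)$ and $t = r + \nu(\fn)$, and the relevant map is the map $\widetilde{\Phi}_{\fm,\fn}$ of Remark \ref{sign}, which is compared with $\Phi_{\fm,\fn}$ by the same remark. Combining this with the defining Stark system relation $\Phi_{\fm,\fn}(\epsilon_\fm) = \epsilon_\fn$ yields
\begin{align*}
{\rm im}(i_\fn(\epsilon_\fn)) = {\rm Fitt}_{0, R}\left(H^1_{(\cF^\fn)^*}(K, T^*(1))^*\right).
\end{align*}

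A further application of Theorem \ref{pt}, now with $\cF \subseteq \cF^\fn$, identifies $H^1_{(\cF^\fn)^*}(K, T^*(1))^*$ with $M/K_\fn$, where $K_\fn$ is the image of $\bigoplus_{\fq \mid \fn} H^1_{/f}(K_\fq, T) \to M$ and is therefore generated by at most $\nu(\fn) = i$ elements. The elementary inclusion ${\rm Fitt}_{0, R}(M/N) \subseteq {\rm Fitt}_{i, R}(M)$ whenever $N$ is $i$-generated, summed over $\fn$, gives $I_i(\epsilon) \subseteq {\rm Fitt}_{i, R}(M)$. For the reverse inclusion I would use the standard identity
\begin{align*}
{\rm Fitt}_{i, R}(M) = \sum_{x_1, \ldots, x_i \in M} {\rm Fitt}_{0, R}\left(M/\langle x_1, \ldots, x_i \rangle\right),
\end{align*}
and reduce to the density claim that for every $x \in M$ there are infinitely many $\fq \in \cQ$ with $x \in K_\fq$. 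Given this claim, for any tuple $(x_j)$ one chooses pairwise distinct $\fq_j \in \cQ$ with $x_j \in K_{\fq_j}$; then $\fn = \fq_1 \cdots \fq_i \in \cN(\cQ)$ satisfies $\langle x_1, \ldots, x_i \rangle \subseteq K_\fn$, and ${\rm Fitt}_{0, R}(M/\langle x_1, \ldots, x_i \rangle) \subseteq {\rm Fitt}_{0, R}(M/K_\fn) \subseteq I_i(\epsilon)$.

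The density claim is the main obstacle. Via Remark \ref{dual} and local Tate duality, $K_\fq$ identifies with the annihilator in $M$ of $\ker({\rm loc}_\fq \colon H^1_{\cF^*}(K, T^*(1)) \to H^1_f(K_\fq, T^*(1)))$, so the condition $x \in K_\fq$ translates into a finite list of non-vanishing conditions ${\rm loc}_\fq(c^*) \neq 0$, one for each class $c^* \in H^1_{\cF^*}(K, T^*(1))$ on which $x$ does not vanish. Proposition \ref{chevotarev} supplies, for each such $c^*$, a positive density set of suitable $\fq \in \cP$, and intersecting these finitely many conditions still yields infinitely many primes; the delicate point is to arrange that these primes can actually be chosen inside $\cQ$ and not merely in $\cP$, which I expect to follow from the infinitude of $\cQ$ and the core-vertex hypothesis on $\cN(\cQ)$ together with the density assertion in Proposition \ref{chevotarev}.
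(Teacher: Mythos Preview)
Your first two steps are correct and match the paper: the Poitou--Tate sequence together with Lemma \ref{keylem} gives ${\rm im}(i_\fn(\epsilon_\fn)) = {\rm Fitt}_{0,R}(M/K_\fn)$, and since $K_\fn$ is $\nu(\fn)$-generated this immediately yields $I_i(\epsilon) \subseteq {\rm Fitt}_{i,R}(M)$.

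The gap is in the reverse inclusion. Your density claim --- that for every $x \in M$ there are infinitely many $\fq \in \cQ$ with $x \in K_\fq$ --- cannot be established from the stated hypotheses. The set $\cQ$ is an \emph{arbitrary} infinite subset of $\cP$ such that $\cN(\cQ)$ contains a core vertex; it may well have density zero in $\cP$. Proposition \ref{chevotarev} produces positive-density sets of primes in $\cP$, but intersecting a positive-density set with a density-zero set can be empty, so no amount of Chebotarev gives you primes in $\cQ$. The ``infinitude of $\cQ$'' and ``core-vertex hypothesis'' you invoke are cardinality conditions, not density conditions, and do not interact with Chebotarev in the way you hope.

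The paper avoids this entirely by a linear-algebra observation that makes Chebotarev unnecessary. Since $\cQ$ is infinite and $\cN(\cQ)$ has a core vertex, Remark \ref{corerem}(2) lets you enlarge that core vertex within $\cN(\cQ)$ to obtain a core vertex $\fn$ with $\nu(\fn) \geq i$. The exact sequence $e(1)$ for this $\fn$ is a \emph{free presentation} of $M$ by a $\nu(\fn) \times (r+\nu(\fn))$ matrix. Now the point: for each $\fm \mid \fn$ with $\nu(\fm) = i$, the sequence $e(\fm)$ is obtained by deleting the $i$ rows indexed by $\fq \mid \fm$, and ${\rm Fitt}_{0,R}(M/K_\fm)$ is the ideal of maximal minors of that $(\nu(\fn)-i)$-row submatrix. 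Summing over all such $\fm$ recovers \emph{all} $(\nu(\fn)-i)\times(\nu(\fn)-i)$ minors of the full matrix, which is exactly ${\rm Fitt}_{i,R}(M)$. Since every such $\fm$ divides $\fn \in \cN(\cQ)$, each contributes to $I_i(\epsilon)$, and the inclusion ${\rm Fitt}_{i,R}(M) \subseteq I_i(\epsilon)$ follows. No appeal to primes outside the divisors of a single $\fn \in \cN(\cQ)$ is needed.
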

\begin{proof}
Let $i$ be a non-negative integer. Since $\cN(\cQ)$ has a core vertex and 
$\cQ$ is an infinite set, we can take a core vertex $\fn \in \cN(\cQ)$ with $\nu(\fn) \geq i$. 
Fix an isomorphism $H^1_{/f}(K_\fq, T) \simeq R$ for each prime $\fq \in \cQ$. 
These isomorphisms induce an isomorphism 
$i_\fm \colon X_{\fm} \simeq \bigcap^{r+\nu(\fm)}_R H^1_{\cF^\fm}(K, T)$ for each ideal $\fm \in \cN(\cQ)$. 
Since $\fn$ is a core vertex, by Theorem \ref{pt}, we have an exact sequence 
\begin{align*}
0 \to H^{1}_{\cF^{\fm}}(K, T) \to H^{1}_{\cF^\fn}(K, T) 
\to \bigoplus_{\fq \mid \fm^{-1}\fn}H^{1}_{/f}(K_{\fq}, T) 
\to H^{1}_{\cF_{\fm}^*}(K, T^*(1))^* \to 0 
\end{align*}
for an ideal $\fm \mid \fn$. 
We denote by $e(\fm)$ this exact sequence. 
Note that $H^1_{\cF^\fn}(K, T)$ is free of rank $r + \nu(\fn)$ by Lemma \ref{freelemma} and that 
$H^1_{/f}(K_\fq, T)$ is free of rank one for any prime $\fq \in \cQ$ by Lemma \ref{local}. 
Since $\fn$ is a core vertex, by Theorem \ref{core}, the element $\epsilon_\fn$ is a basis of $X_\fn$. 
Thus for any ideal $\fm \mid \fn$, the exact sequence $e(\fm)$ and 
the element $i_\fm(\epsilon_\fm)$ satisfy the assumptions in Lemma \ref{keylem}. Hence we have 
\begin{align*}
{\rm im}(i_\fm(\epsilon_\fm)) = {\rm Fitt}_{0, R}\left(H^1_{\cF_\fm^*}(K, T^*(1))^* \right). 
\end{align*}
By comparing the exact sequences $e(1)$ and $e(\fm)$ for any $\fm \mid \fn$ with $\nu(\fm) = i$, 
we see that 
\begin{align*}
{\rm Fitt}_{i, R}\left( H^{1}_{\cF^*}(K, T^*(1))^* \right) 
= \sum_{\fm \mid \fn, \nu(\fm) = i} {\rm Fitt}_{0, R}\left( H^{1}_{\cF_\fm^*}(K, T^*(1))^* \right). 
\end{align*}
Thus we have 
\begin{align*}
{\rm Fitt}_{i, R}\left( H^{1}_{\cF^*}(K, T^*(1))^* \right) 
= \sum_{\fm \mid \fn, \nu(\fm) = i} {\rm im}(i_\fm(\epsilon_\fm)) \subseteq I_i(\epsilon) 
\end{align*}
for any core vertex $\fn \in \cN(\cQ)$ with $\nu(\fn) \geq i$.  
Let $\fm \in \cQ$ with $\nu(\fm) = i$. Then there is a core vertex $\fn \in \cN(\cQ)$ with $\fm \mid \fn$. 
Thus we have 
\begin{align*}
{\rm im}(i_\fm(\epsilon_\fm)) \subseteq \sum_{\fm \mid \fn, \nu(\fn) = i} {\rm im}(i_\fm(\epsilon_\fm))
= {\rm Fitt}_{i, R}\left( H^{1}_{\cF^*}(K, T^*(1))^* \right). 
\end{align*}
Hence we get the desired equality. 
\end{proof}

The following proposition will be used in Section 6. 

\begin{proposition}\label{Fitt}
Suppose that $\cF$ is cartesian and $r = \chi(\cF) \geq 0$. 
Let $\cQ$ be a subset of $\cP$ such that $\cN(\cQ)$ has a core vertex. 
Let $\epsilon = \{\epsilon_\fn\}_{\fn \in \cN(\cQ)}$ be a basis of ${\bf SS}_{r}(T, \cF, \cQ)$. 
For $\fn \in \cN(\cQ)$, let 
$l_\fn \colon \bigwedge^{r}_R H^{1}_{\cF^\fn}(K, T) \to \bigcap^{r}_R H^{1}_{\cF^\fn}(K, T)$ denote the canonical map. 
If there is a free $R$-submodule $\sum_{1 \leq i\leq r}Rc_i$ of $H^{1}_{\cF}(K, T)$ of rank $r$, 
then there is a unique element $\theta(\epsilon) \in R$ such that 
$\theta(\epsilon) R = {\rm Fitt}_{0, R}\left(H^{1}_{\cF^*}(K, T^*(1))^* \right)$ and 
$\epsilon_{1} = l_1(\theta(\epsilon) c_{1} \wedge \cdots \wedge c_{r})$. 
In particular, we have $\epsilon_1 = 0$ if there is an injective map $R^{r+1} \to H^1_\cF(K, T)$. 
\end{proposition}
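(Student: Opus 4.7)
The plan is to fix a core vertex $\fn \in \cN(\cQ)$ and explicitly compute $\epsilon_1 = \Phi_{\fn, 1}(\epsilon_\fn)$ using a basis of $U_\fn := H^1_{\cF^\fn}(K, T)$ adapted to the submodule $M := \sum_i R c_i$. By Theorem \ref{core} the element $\epsilon_\fn$ is a basis of $X_\fn$, and by Lemma \ref{freelemma} the $R$-module $U_\fn$ is free of rank $r + \nu(\fn)$. Applying Lemma \ref{basis} with $A = R$, the submodule $M$ is a direct summand of $H^1(K, T)$ and hence of $U_\fn$; choose a free complement $P \subseteq U_\fn$ of rank $\nu(\fn)$ with basis $d_1, \ldots, d_{\nu(\fn)}$, producing a basis $c_1, \ldots, c_r, d_1, \ldots, d_{\nu(\fn)}$ of $U_\fn$. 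Since $\fn$ is a core vertex, Theorem \ref{pt} yields the exact sequence
\[
0 \to H^1_\cF(K, T) \to U_\fn \to \bigoplus_{\fq \mid \fn} V_\fq \to H^1_{\cF^*}(K, T^*(1))^* \to 0
\]
with $V_\fq := H^1_{/f}(K_\fq, T) \simeq R$ by Lemma \ref{local}. Fix bases $v_\fq \in V_\fq$, and let $\rho_\fq \in U_\fn^*$ denote the composition $U_\fn \to V_\fq \xrightarrow{v_\fq^*} R$; because $c_i \in H^1_\cF(K, T) = \ker$, the identity $\rho_\fq|_M = 0$ forces the above map to factor as $U_\fn = M \oplus P \to P \to \bigoplus V_\fq$, whose matrix $A = (\rho_\fq(d_k))_{\fq, k}$ is square of size $\nu(\fn)$. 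In particular $\mathrm{Fitt}_{0, R}(H^1_{\cF^*}(K, T^*(1))^*) = \det(A) \cdot R$.

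Write $\epsilon_\fn = u_\fn \cdot l_{U_\fn}(c_1 \wedge \cdots \wedge d_{\nu(\fn)}) \otimes (v_{\fq_1}^* \wedge \cdots \wedge v_{\fq_{\nu(\fn)}}^*)$ with $u_\fn \in R^\times$. Unwinding Definition \ref{map} and applying the defining pairing of $l_{U_\fn}$, one obtains, for any $\phi_1, \ldots, \phi_r \in H^1_\cF(K, T)^*$ with arbitrary lifts $\tilde\phi_i \in U_\fn^*$,
\[
\epsilon_1(\phi_1 \wedge \cdots \wedge \phi_r) = u_\fn \cdot \det N,
\]
where $N$ is the $(r + \nu(\fn)) \times (r + \nu(\fn))$ matrix with rows $\tilde\phi_1, \ldots, \tilde\phi_r, \rho_{\fq_1}, \ldots, \rho_{\fq_{\nu(\fn)}}$ and columns $c_1, \ldots, c_r, d_1, \ldots, d_{\nu(\fn)}$. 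The lower-left $\nu(\fn) \times r$ block of $N$ vanishes by $\rho_\fq|_M = 0$, the upper-left $r \times r$ block is $(\phi_i(c_j))$ independent of the choice of lifts (any two lifts differ by a functional vanishing on $H^1_\cF(K, T) \supseteq M$), and the lower-right block is $A$; the resulting block-triangular structure gives $\det N = \det(\phi_i(c_j)) \cdot \det(A)$. Since $l_1(c_1 \wedge \cdots \wedge c_r)(\phi_1 \wedge \cdots \wedge \phi_r) = \det(\phi_i(c_j))$ by the definition of $l_1$, this yields $\epsilon_1 = l_1(u_\fn \det(A) \cdot c_1 \wedge \cdots \wedge c_r)$. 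Setting $\theta(\epsilon) := u_\fn \det(A)$ gives both $\epsilon_1 = l_1(\theta(\epsilon) \cdot c_1 \wedge \cdots \wedge c_r)$ and $\theta(\epsilon) R = \mathrm{Fitt}_{0, R}(H^1_{\cF^*}(K, T^*(1))^*)$; uniqueness follows because the splitting $H^1_\cF(K, T) = M \oplus M'$ furnishes dual functionals $\phi_i$ with $\det(\phi_i(c_j)) = 1$, so $\theta \cdot l_1(c_1 \wedge \cdots \wedge c_r) = 0$ implies $\theta = 0$.

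For the final assertion, suppose $R^{r+1} \hookrightarrow H^1_\cF(K, T)$ with basis $c_1, \ldots, c_{r+1}$. Apply the argument above to the rank-$r$ submodule $\sum_{i \leq r} R c_i$, but extend the basis of $U_\fn$ to $c_1, \ldots, c_{r+1}, f_1, \ldots, f_{\nu(\fn) - 1}$; the corresponding square matrix $A$ then has a zero column coming from $c_{r+1}$, because $c_{r+1} \in H^1_\cF(K, T)$ gives $\rho_\fq(c_{r+1}) = 0$. Hence $\theta(\epsilon) = 0$ and $\epsilon_1 = 0$. The principal technical obstacle is to unwind Definition \ref{map} into the matrix form displayed above; once this is done, the single identity $\rho_\fq|_M = 0$ simultaneously produces the clean factorization through $l_1(c_1 \wedge \cdots \wedge c_r)$, matches the Fitting-ideal description via the cokernel of $A$, and forces the vanishing in the overdetermined case.
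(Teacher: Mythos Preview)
Your proof is correct and follows essentially the same strategy as the paper: pick a core vertex $\fn$, extend $c_1,\ldots,c_r$ to a basis of the free module $H^1_{\cF^\fn}(K,T)$, use Poitou--Tate to identify $H^1_{\cF^*}(K,T^*(1))^*$ with the cokernel of the localization map restricted to the complement, and recognize $\theta(\epsilon)$ as a unit times the determinant of that square map. The only real difference is in the bookkeeping of $\Phi_{\fn,1}$: the paper invokes Proposition~\ref{commlem2} (functoriality of the bidual map under a map of short exact sequences) to transport $\epsilon_\fn$ down, whereas you unwind Definition~\ref{map} directly into the $(r+\nu(\fn))\times(r+\nu(\fn))$ determinant $\det N$ and read off the block-triangular factorization by hand. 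Both computations encode the same identity; yours is more elementary but you have silently absorbed the sign $(-1)^{\mu(\fn)}$ of Remark~\ref{sign} (recall $\Phi_{\fn,1}$ is defined as a composition, not a single application of Definition~\ref{map}) into your unit $u_\fn$, which is harmless for the conclusion but worth making explicit.
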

\begin{proof}
Put $M = \sum_{1 \leq i\leq r}Rc_i$. 
Note that $M$ is an injective $R$-module since $R$ is a zero dimensional Gorenstein local ring. 
Hence the map $M \to H^1_{\cF}(K, T)$ is a split injection, 
the composition of the maps $\bigwedge^r_R M \simeq \bigcap^r_R M \to \bigcap^r_R H^1_{\cF}(K, T)$ 
is injective. 
Thus uniqueness follows from $\epsilon_{1} = l_1(\theta(\epsilon) c_{1} \wedge \cdots \wedge c_{r})$. 
 
To show existence, take a core vertex $\fn \in \cN(\cQ)$. 
Put $\fn = \fq_1 \cdots \fq_{\nu(\fn)}$. 
By Remark \ref{sign}, we have $\Phi_{\fn, 1} = (-1)^{\mu(\fn)}\widetilde{\Phi}_{\fn, 1}$ 
where $\widetilde{\Phi}_{\fn, 1}$ is the map defined in Remark \ref{sign} and $\mu(\fn) = \sum_{i=1}^{\mu(\fn)-1}i$. 
Since $M$ is an injective $R$-module and $H^1_{\cF^\fn}(K, T)$ is a free $R$-module of rank 
$r + \nu(\fn)$ by Lemma \ref{freelemma}, 
we can take elements $c_{r+1}, \ldots, c_{r+\nu(\fd)} \in H^{1}_{\cF^{\fn}}(K, T)$ 
such that $H^{1}_{\cF^{\fn}}(K, T) = \bigoplus_{i=1}^{r+\nu(\fn)}Rc_{i}$. 
We also take a basis $w^* = w_{1}^* \wedge \cdots \wedge w_{\nu(\fn)}^*$ of ${\rm det}(W_{\fn})$ 
such that $w_i^* \in H^1_{/f}(K_{\fq_i} T)^*$ and 
$l_\fn(c_{1} \wedge \cdots \wedge c_{r+\nu(\fn)}) \otimes w^* = \epsilon_\fn$. 
We denote by $w_i \in H^1_{/f}(K_{\fq_i}, T)$ the dual basis of $w_i^*$. 
Let $\theta(\epsilon)'$ be the determinant of 
the composition of the maps 
\begin{align*}
\varphi \colon \bigoplus_{i=r+1}^{r+\nu(\fn)} Rc_{i} \to H^1_{\cF^\fn}(K, T) 
\xrightarrow{\oplus_{\fq \mid \fn}{\rm loc}_{\fq}^{/f}} \bigoplus_{\fq \mid \fn}
H^1_{/f}(K_\fq, T) = \bigoplus_{i=1}^{\nu(\fn)}Rw_{i}. 
\end{align*}
By Theorem \ref{pt} and $c_{1}, \ldots, c_{r} \in H^{1}_{\cF}(K, T)$, we have 
$H^{1}_{\cF^*}(K, T^*(1))^* = {\rm coker}\left( \varphi \right)$. 
By the definition of Fitting ideals, 
we have $\theta(\epsilon)' R = {\rm Fitt}_{0, R}\left( H^{1}_{\cF^*}(K, T^*(1))^* \right)$. 
Applying Proposition \ref{commlem2} to the following commutative diagram 
\begin{align*}
\xymatrix{
0 \ar[r] & M \ar[r] \ar[d] & H_{\cF^{\fn}}(K, T) \ar[r] \ar[d]^{\rm id} 
& \bigoplus_{i = r+1}^{r + \nu(\fn)} R c_{i} 
\ar[r] \ar[d]^{\varphi} & 0
\\
0 \ar[r] & H_{\cF}^{1}(K, T) \ar[r] & H_{\cF^{\fn}}^{1}(K, T) \ar[r] & \bigoplus_{i=1}^{\nu(\fn)} R w_{i}, 
}
\end{align*}
we have the following commutative diagram:
\begin{align*}
\xymatrix{
\bigcap^{r + \nu(\fn)}_R H_{\cF^{\fn}}(K, T) \ar[r] \ar[d]^{\rm id} 
& \bigcap^{r}_R M \otimes_R \det\left( \bigoplus_{i = r+1}^{r + \nu(\fn)} Rc_i \right) 
\ar[d]_{j \otimes \det(\varphi)} 
\\
\bigcap^{r + \nu(\fn)}_R H_{\cF^{\fn}}(K, T) \ar[r] 
& \bigcap^{r}_R H_{\cF}^{1}(K, T) \otimes_R \det\left( \bigoplus_{i = 1}^{\nu(\fn)} Rw_i \right)
}
\end{align*}
where $j \colon \bigcap^r_R M \to \bigcap^r_R H^1_{\cF}(K, T)$ is the canonical injection. 
Since the right vertical map sends 
$l_1(c_{1} \wedge \cdots \wedge c_{r}) \otimes (c_{r+1} \wedge \cdots \wedge c_{r+\nu(\fn)})$ to 
$l_1(\theta(\epsilon)' c_{1} \wedge \cdots \wedge c_{r}) \otimes (w_1 \wedge \cdots \wedge w_r)$, 
we have $\epsilon_1 = \Phi_{\fn, 1}(\epsilon_\fn) 
= (-1)^{\mu(\fn)} \widetilde{\Phi}_{\fn, 1}(l_1(c_1 \wedge \cdots \wedge c_{r+\nu(\fn)}) \otimes w^*)
= (-1)^{\mu(\fn)} l_1(\theta(\epsilon)' c_1 \wedge \cdots \wedge c_r)$. 
This completes the proof. 
\end{proof}

Let $\cQ$ be a subset of $\cP$ such that $\cN(\cQ)$ has a core vertex 
and $S$ a zero dimensional Gorenstein local ring. 
Suppose that $\cF$ is cartesian and that there is a surjective ring homomorphism $\pi \colon R \to S$. 
Take a core vertex $\fn \in \cN(\cQ)$ for $\cF$. 
By Lemma \ref{cartesiansurj}, Remark \ref{corerem} (3), and Lemma \ref{freelemma}, we have a map 
\begin{align*}
\bigcap^{r + \nu(\fn)}_R H^1_{\cF^\fn}(K, T) \simeq &\bigwedge^{r + \nu(\fn)}_R H^1_{\cF^\fn}(K, T) 
\to \bigwedge^{r + \nu(\fn)}_R H^1_{\cF^\fn_S}(K, T \otimes_R S) \simeq \bigcap^{r + \nu(\fn)}_R 
H^1_{\cF^\fn_S}(K, T \otimes_R S). 
\end{align*}
If $r = \chi(\cF) \geq 0$, we get a map 
\begin{align*}
\phi_\pi \colon {\bf SS}_{r}(T, \cF, \cQ) &\simeq X_\fn(T, \cF) 
\to X_\fn(T \otimes_R S, \cF_S) \simeq {\bf SS}_{r}(T \otimes_R S, \cF, \cQ) 
\end{align*}
by Theorem \ref{core}. 
It is easy to see that the map $\phi_\pi$ is independent of the choice of the core vertex $\fn \in \cN(\cQ)$. 

\begin{proposition}\label{surj}
Suppose that $\cF$ is cartesian and $r = \chi(\cF) \geq 0$. 
Let $\cQ$ be a subset of $\cP$ such that $\cN(\cQ)$ has a core vertex, $S$ a zero dimensional Gorenstein local ring, 
and $\pi \colon R \to S$ a surjective ring homomorphism. 
\begin{itemize}
\item[(1)] The map $\phi_{\pi}$ induces an isomorphism 
\begin{align*}
{\bf SS}_{r}(T, \cF, \cQ) \otimes_R S \simeq {\bf SS}_{r}(T \otimes_R S, \cF_S, \cQ).
\end{align*}
\item[(2)] We have $I_{i}(\epsilon)S = I_{i}(\phi_{\pi}(\epsilon))$ 
for any $\epsilon \in {\bf SS}_r(T, \cF, \cQ)$ and non-negative integer $i$. 
\end{itemize}
\end{proposition}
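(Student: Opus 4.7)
The plan is to fix a core vertex $\fn \in \cN(\cQ)$, which exists by hypothesis, and reduce both statements via Theorem \ref{core}. By Lemma \ref{cartesiansurj} and Remark \ref{corerem}(3), $\cF_S$ is cartesian and $\fn$ remains a core vertex for $\cF_S$. Moreover, since the residue field $\Bbbk$ is common to $R$ and $S$, $(T \otimes_R S)/\fm_S(T \otimes_R S) \simeq T/\fm_R T$ as $\Bbbk[G_K]$-modules (and similarly for the Cartier dual), so the core rank satisfies $\chi(\cF_S) = \chi(\cF) = r$. Theorem \ref{core} then identifies ${\bf SS}_r(T, \cF, \cQ) \simeq X_\fn(T, \cF)$ and ${\bf SS}_r(T \otimes_R S, \cF_S, \cQ) \simeq X_\fn(T \otimes_R S, \cF_S)$ as free modules of rank one over $R$ and $S$ respectively, and by construction $\phi_\pi$ is induced on inverse limits by the natural map between these $X_\fn$'s.

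For part (1), both $X_\fn(T, \cF) \otimes_R S$ and $X_\fn(T \otimes_R S, \cF_S)$ are free $S$-modules of rank one, so by Nakayama's lemma it suffices to show that the induced map is an isomorphism modulo $\fm_S$. Both reductions compute the same $\Bbbk$-vector space $\bigwedge^{r+\nu(\fn)}_\Bbbk H^1_{\cF^\fn}(K, T/\fm_R T) \otimes_\Bbbk \det(W_\fn(T/\fm_R T))$: on the source side, $\cF^\fn$ is cartesian by Corollary \ref{cartesian} and $H^1_{\cF^\fn}(K, T)$ is free by Lemma \ref{freelemma}, so combining Lemma \ref{mult}(2) with the Gorenstein identification $\Bbbk \simeq R[\fm_R]$ yields $H^1_{\cF^\fn}(K, T) \otimes_R \Bbbk \simeq H^1_{\cF^\fn}(K, T/\fm_R T)$, and the $W_\fn$ factor reduces analogously via Lemma \ref{local}(2); the same analysis applies on the target side for $(T \otimes_R S, \cF_S)$. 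Under these identifications the natural map becomes the identity, which completes part (1).

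For part (2), by $R$-linearity of both $\phi_\pi$ and $I_i$, it suffices to treat the case $\epsilon$ a basis of ${\bf SS}_r(T, \cF, \cQ)$; then $\phi_\pi(\epsilon)$ is a basis of ${\bf SS}_r(T \otimes_R S, \cF_S, \cQ)$ by part (1). Fix $\fm \in \cN(\cQ)$ with $\nu(\fm) = i$; iterating Remark \ref{corerem}(2), extend a core vertex of $\cN(\cQ)$ by the prime factors of $\fm$ not already present to obtain a core vertex $\fn \in \cN(\cQ)$ with $\fm \mid \fn$. Applying Theorem \ref{pt} with $\cF_1 = \cF^\fm$ and $\cF_2 = \cF^\fn$ (and the core-vertex hypothesis on $\fn$) presents $M_\fm := H^1_{\cF^*_\fm}(K, T^*(1))^*$ as the cokernel of a map $H^1_{\cF^\fn}(K, T) \to \bigoplus_{\fq \mid \fn\fm^{-1}} H^1_{/f}(K_\fq, T)$ between free $R$-modules. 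Lemma \ref{keylem} then gives ${\rm im}(i_\fm(\epsilon_\fm)) = {\rm Fitt}_{0, R}(M_\fm)$, and the parallel computation over $S$ yields ${\rm im}(i_\fm(\phi_\pi(\epsilon)_\fm)) = {\rm Fitt}_{0, S}(M_{\fm, S})$ with $M_{\fm, S}$ the analogous cokernel for $(T \otimes_R S, \cF_S)$.

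Combining the base-change isomorphism $H^1_{\cF^\fn}(K, T) \otimes_R S \simeq H^1_{\cF^\fn_S}(K, T \otimes_R S)$ (from part (1)) with $H^1_{/f}(K_\fq, T) \otimes_R S \simeq H^1_{/f}(K_\fq, T \otimes_R S)$ (Lemma \ref{local}(2)) and the functoriality of the localization maps, the presentation of $M_{\fm, S}$ is the base change of that of $M_\fm$, so $M_{\fm, S} \simeq M_\fm \otimes_R S$. The standard base-change property of Fitting ideals gives ${\rm Fitt}_{0, S}(M_{\fm, S}) = {\rm Fitt}_{0, R}(M_\fm) \cdot S$, and summing over $\fm \in \cN(\cQ)$ with $\nu(\fm) = i$ yields $I_i(\phi_\pi(\epsilon)) = I_i(\epsilon) \cdot S$. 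The main obstacle lies in securing the base-change isomorphism at core vertices: it underpins both parts and relies on the cartesian hypothesis (so that Lemma \ref{mult}(2) governs the reduction mod $\fm_S$) together with the freeness of Selmer groups at core vertices (Lemma \ref{freelemma}), which allows the exterior biduals to be replaced by exterior powers and ranks to be compared directly.
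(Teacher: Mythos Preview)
Your proof is correct, but it diverges from the paper's in two interesting ways.

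For part (1), the paper argues more directly: by Lemma \ref{basis} (applied with $A = S$) the map $H^1_{\cF^\fn}(K, T) \otimes_R S \to H^1_{\cF^\fn_S}(K, T \otimes_R S)$ is a split injection, and since both sides are free of the same rank over the artinian ring $S$ (Lemma \ref{freelemma}) it is an isomorphism; this immediately gives $X_\fn(T,\cF)\otimes_R S \simeq X_\fn(T\otimes_R S,\cF_S)$. Your Nakayama route is also valid. A point worth noting is that your identifications via Lemma \ref{mult}(2) together with $\Bbbk \simeq R[\fm_R]$ actually unwind to the map induced by the \emph{surjection} $T \to T/\fm_R T$ (both steps are multiplication by a generator of $R[\fm_R]$ and they cancel), so the compatibility with $T \to T\otimes_R S$ you assert does hold; but this is not entirely obvious and deserves a sentence. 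In effect you have replaced the paper's use of Lemma \ref{basis} by Lemma \ref{mult}(2) plus freeness.

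For part (2), the two arguments are genuinely different. The paper proves the termwise equality ${\rm im}(\phi_\pi(\epsilon)_\fm) = {\rm im}(\epsilon_\fm)S$ by an explicit commutative diagram on exterior powers of duals, without interpreting either side. You instead identify ${\rm im}(i_\fm(\epsilon_\fm))$ with ${\rm Fitt}_{0,R}(M_\fm)$ via Lemma \ref{keylem} (exactly as in the proof of Theorem \ref{control}), observe that the presentation of $M_\fm$ base-changes to that of $M_{\fm,S}$ using the isomorphism at the core vertex from part (1), and then invoke base change of Fitting ideals. Your approach is more conceptual and makes the equality transparent once the Fitting interpretation is in hand; the paper's approach is more self-contained, works for arbitrary $\epsilon$ without first reducing to a basis, and avoids re-running the argument of Theorem \ref{control}.
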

\begin{proof}
Let $\fn \in \cN(\cQ)$ be a core vertex. 
Note that $\cF_S$ is cartesian by Lemma \ref{cartesiansurj}. 
Hence by Lemma \ref{basis} and Lemma \ref{freelemma}, 
the map $X_\fn(T, \cF) \to X_{\fn}(S, \cF_S)$ induces an isomorphism 
$X_{\fn}(T, \cF) \otimes_R S \simeq X_{\fn}(S, \cF_S)$. 
Hence the map $\phi_\pi$ induces an isomorphism 
${\bf SS}_{r}(T, \cF, \cQ) \otimes_R S \simeq {\bf SS}_{r}(S, \cF_S, \cQ)$. 

We will prove the assertion (2). 
Let $\fm \in \cN(\cQ)$. 
Since $\cN(\cQ)$ has a core vertex, there is a core vertex $\fn \in \cN(\cQ)$ with $\fm \mid \fn$. 
We fix an isomorphism $H^1_{/f}(K_\fq, T) \simeq R$ for each prime $\fq \mid \fn$. 
Let $\star \in  \{\fm, \fn \}$. 
Using these isomorphisms, we regard the element $\epsilon_\star$ (resp. $\phi_{\pi}(\epsilon)_\star$) 
as an element of $\bigcap^{r+\nu(\star)}_R H^1_{\cF^\star}(K, T)$ 
(resp. $\bigcap^{r+\nu(\star)}_S H^1_{\cF^\star_S}(K, T \otimes_R S)$). 
We put $H_\star = H^1_{\cF^\star}(K, T)^*$ and $H_\star' = H^1_{\cF^\star_S}(K, T \otimes_R S)^*$. 
Applying Lemma \ref{base} to the exact sequence $W_{\fn\fm^{-1}} \to H_\fn \to H_\fm \to 0$, 
we get a map 
\begin{align*}
\psi \colon \bigwedge^{r + \nu(\fm)}_R H_\fm \simeq 
\bigwedge^{r + \nu(\fm)}_R H_\fm \otimes_R \det(W_{\fn\fm^{-1}}) \to \bigwedge^{r+\nu(\fn)}_R H_\fn 
\end{align*}
where the first isomorphism is induced by the fixed isomorphisms $H^1_{/f}(K_\fq, T) \simeq R$. 
In the same way, we also get a map $\psi' \colon \bigwedge^{r + \nu(\fm)}_R H_\fm' 
\to \bigwedge^{r+\nu(\fn)}_R H_\fn'$. 
Then we have the following commutative diagram:
\begin{align*}
\xymatrix{
\bigwedge^{r+\nu(\fm)}_R H_\fn \ar[r]^{j} \ar[d]^-{s} & 
\bigwedge^{r+\nu(\fm)}_R H_\fm \ar[r]^-{\psi}  & 
\bigwedge^{r+\nu(\fn)}_R H_\fn \ar[r]^-{\epsilon_\fn} \ar[d]^-{t} & R \ar[d]^{\pi}
\\
\bigwedge^{r+\nu(\fm)}_S H_\fn' \ar[r]^{j'}  & 
\bigwedge^{r+\nu(\fm)}_S H_\fm' \ar[r]^-{\psi'}  & 
\bigwedge^{r+\nu(\fn)}_S H_\fn' \ar[r]^-{\phi_{\pi}(\epsilon)_\fn}  & S
}
\end{align*}
where the maps $s$ and $t$ are induced by a map 
\begin{align*}
H_\fn \to H_\fn \otimes_R S \simeq (H^1_{\cF^\fn}(K, T) \otimes_R S)^* \simeq H'_\fn
\end{align*} 
and the map $j$ (resp. $j'$) is induced by the injective map 
$H^1_{\cF^\fm}(K, T) \to H^1_{\cF^\fn}(K, T)$ 
(resp. $H^1_{\cF^\fm_S}(K, T \otimes_R S) \to H^1_{\cF^\fn_S}(K, T \otimes_R S)$). 
By the definition of Stark system, we have $\phi_{\pi}(\epsilon)_\fm = \phi_{\pi}(\epsilon)_\fn \circ \psi'$ and 
$\epsilon_\fm = \epsilon_\fn \circ \psi$. 
Since the maps $s$, $j$, and $j'$ are surjective, we have 
\begin{align*}
{\rm im}(\phi_{\pi}(\epsilon)_\fm) 
= {\rm im}\left( \phi_{\pi}(\epsilon)_\fm \circ j' \circ s \right) 
= {\rm im} \left( \pi \circ \epsilon_\fm \circ j \right) 
= {\rm im}(\epsilon_\fm)S. 
\end{align*}
This completes the proof. 
\end{proof}

\subsection{Stark Systems over Principal Artinian Local Rings}

Suppose that $R$ is a principal artinian local ring, $\cF$ is cartesian, and $r = \chi(\cF) \geq 0$.  
We will show that there is a canonical isomorphism 
from the module of Stark systems defined in \cite{MR2} to 
the module of Stark systems defined in the previous subsection. 

Let $\cQ$ be an infinite subset of $\cP_{L_R(R)}(\cF)$ such that $\cN(\cQ)$ has a core vertex. 
We put 
\begin{align*}
W'_\fn := \bigoplus_{\fq \mid \fn} H^1_{\rm tr}(K_\fq, T)^* 
= \bigoplus_{\fq \mid \fn}\Hom_R \left( H^1_{\rm tr}(K_\fq, T), R \right)
\end{align*}
for each ideal $\fn \in \cN(\cQ)$. 
By Lemma \ref{local}, the composition of the maps
\begin{align*}
H^1_{\rm tr}(K_\fq, T) \to H^1(K, T) \xrightarrow{{\rm loc}^{/f}_\fq} H^1_{/f}(K_\fq, T)
\end{align*}
is an isomorphism for any prime $\fq \in \cQ$. 
Hence we have a canonical isomorphism $j_{\fn} \colon W'_{\fn} \to W_{\fn}$ for any ideal $\fn \in \cN(\cQ)$. 

Let $Y_{\fn} := \bigwedge^{r + \nu(\fn)}_R H^{1}_{\cF^{\fn}}(K, T) \otimes_R \det(W'_{\fn})$ and 
$\Psi_{\fn, \fm} \colon Y_\fn \to Y_\fm$ the map defined in \cite[Definition 6.3]{MR2}. 
Then the module ${\bf SS}_r (T, \cF, \cQ)' $ of Stark systems defined in \cite{MR2} 
is the inverse limit 
\begin{align*}
{\bf SS}_r (T, \cF, \cQ)' := \varprojlim_{\fn \in \cN(\cQ)} Y_{\fn} 
\end{align*}
with respect to the maps $\Psi_{\fn, \fm}$. 

\begin{lemma}\label{comparison}
Let $0 \to N \to M \xrightarrow{h} F$ be an exact sequence of finitely generated $R$-modules, 
$F$ a free $R$-module of rank one, and $s$ a positive integer. 
Then the following diagram commutes
\begin{align*}
\xymatrix@C=60pt{
\bigwedge^s_R M \ar[r]^-{\widehat{h}} \ar[d]^-{l_M} & \bigwedge^{s-1}_R N \otimes_R F \ar[d]^-{l_N \otimes {\rm id}_F}
\\
\bigcap^s_R M \ar[r]^-{(-1)^{s-1}\widetilde{h}} & \bigcap^{s-1}_R N \otimes_R F
}
\end{align*}
where $l_N$ and $l_M$ are the canonical maps, $\widehat{h}$ is the map defined in \cite[Proposition A.1]{MR2}, 
and $\widetilde{h}$ is the dual of the map defined by the exact sequence 
$F^* \xrightarrow{h^*} M^* \to N^* \to 0$ using Lemma \ref{base}. 
\end{lemma}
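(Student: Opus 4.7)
The strategy is to verify the commutativity by pairing both sides against a generic decomposable element of $\bigwedge^{s-1}_R N^* \otimes_R F^*$. Since a homomorphism into $\bigcap^{s-1}_R N \otimes_R F$ is determined by its pairing against $\bigwedge^{s-1}_R N^* \otimes_R F^*$, it suffices to check that the two resulting functionals coincide when evaluated on each generator $m = m_1 \wedge \cdots \wedge m_s \in \bigwedge^s_R M$.

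Fix such an $m$ together with $g_1 \wedge \cdots \wedge g_{s-1} \otimes f^* \in \bigwedge^{s-1}_R N^* \otimes_R F^*$, and choose arbitrary lifts $\widetilde{g}_i \in M^*$ of each $g_i$. Along the down-then-right path, the construction of $\widetilde{h}$ via Lemma \ref{base} applied to $F^* \xrightarrow{h^*} M^* \to N^* \to 0$ gives
\begin{align*}
\widetilde{h}(l_M(m))(g_1 \wedge \cdots \wedge g_{s-1} \otimes f^*) = l_M(m)\bigl(\widetilde{g}_1 \wedge \cdots \wedge \widetilde{g}_{s-1} \wedge h^*(f^*)\bigr) = \det(\Theta),
\end{align*}
where $\Theta$ is the $s \times s$ matrix whose rows $1 \leq i \leq s-1$ have entries $\widetilde{g}_i(m_j)$ and whose last row is $(f^*(h(m_j)))_j$. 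Cofactor expansion along the last row yields
\begin{align*}
\sum_{j=1}^s (-1)^{s+j} f^*(h(m_j)) \det\bigl((\widetilde{g}_i(m_k))_{i \leq s-1,\, k \neq j}\bigr).
\end{align*}
Along the right-then-down path, the formula $\widehat{h}(m) = \sum_{j=1}^s (-1)^{j-1} (m_1 \wedge \cdots \widehat{m}_j \cdots \wedge m_s) \otimes h(m_j)$ from \cite[Proposition A.1]{MR2}, interpreted canonically inside $\bigwedge^{s-1}_R N \otimes_R F$, pairs via $l_N \otimes \mathrm{id}_F$ with the lifted functional to produce
\begin{align*}
\sum_{j=1}^s (-1)^{j-1} \det\bigl((\widetilde{g}_i(m_k))_{i \leq s-1,\, k \neq j}\bigr) f^*(h(m_j)).
\end{align*}

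Comparing the two sums, the ratio of signs is $(-1)^{s+j}/(-1)^{j-1} = (-1)^{s-1}$, exactly matching the sign asserted in the lemma. The step requiring the most care is verifying that the evaluation along the right-then-down path is independent of the choice of lifts $\widetilde{g}_i$; this follows from the observation that replacing $\widetilde{g}_i$ by $\widetilde{g}_i + h^*(\eta_i)$ amounts to adding $\eta_i$ times the final row of $\Theta$ to row $i$, which preserves $\det(\Theta)$ and hence its cofactor expansion. This determinant identity, combined with the cofactor expansion above, is the entire content of the computation.
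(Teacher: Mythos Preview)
Your proof is correct and takes a genuinely different route from the paper's. The paper exploits the principal artinian local ring hypothesis to decompose $M = Rm \oplus N_0$ and $N = Im \oplus N_0$, then verifies commutativity on a spanning set of $\bigwedge^s_R M$ consisting of wedges $m_1 \wedge \cdots \wedge m_s$ with $m_2,\ldots,m_s \in N_0 \subseteq N$; on such elements both $\widehat{h}$ and the determinant collapse to a single term, making the check immediate. You instead work with an arbitrary decomposable element and reduce everything to the cofactor expansion of $\det(\Theta)$ along its last row, which is slicker and avoids choosing an adapted basis.

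One point deserves a cleaner justification than the phrase ``interpreted canonically inside $\bigwedge^{s-1}_R N \otimes_R F$''. The contraction formula you quote naturally lives in $\bigwedge^{s-1}_R M \otimes_R F$, and what you are really using is that the composite $\bigwedge^s_R M \xrightarrow{\widehat{h}} \bigwedge^{s-1}_R N \otimes_R F \to \bigwedge^{s-1}_R M \otimes_R F$ is this contraction, together with the functoriality $l_N(n)(g_1\wedge\cdots\wedge g_{s-1}) = l_M(\iota(n))(\widetilde{g}_1\wedge\cdots\wedge\widetilde{g}_{s-1})$ for any lifts $\widetilde{g}_i$ of the $g_i$ along $\iota^*\colon M^*\to N^*$. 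This functoriality is what makes your right-then-down evaluation via lifts legitimate; your row-operation argument establishes lift-independence but not, by itself, that the resulting value equals the honest pairing in $\bigcap^{s-1}_R N$. Once stated, it is immediate, so this is a matter of exposition rather than a gap.
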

\begin{proof}
We may assume that $F = R$. 
Since $R$ is a principal artinian local ring, we can write $M = Rm \oplus N_0$ and $N = Im \oplus N_0$ 
for some ideal $I$ of $R$. Then $\bigwedge^s_R M$ is generated by the set 
\begin{align*}
E = \{m_1 \wedge \cdots \wedge m_s \mid m_2, \ldots, m_{s} \in N_0, m_1 \in \{m\} \cup N_0\}. 
\end{align*}
Let $f = f_1 \wedge \cdots \wedge f_{s-1} \in \bigwedge^{s-1}_R N^*$ and 
$\widetilde{f}_i$ a lift of $f_i$ in $M^*$. 
Note that 
$\widetilde{h} \circ l_M(m_1 \wedge \cdots \wedge m_s)(f_1 \wedge \cdots \wedge f_{s-1}) = 
\det\left( (\widetilde{f}_i(m_j))_{1 \leq i \leq s, 1 \leq j \leq s} \right)$ where $\widetilde{f}_s = h$. 
Thus if $m = m_1 \wedge \cdots \wedge m_s \in E$, then 
\begin{align*}
(-1)^{s-1} \widetilde{h} \circ l_M(m)(f) 
&= (-1)^{s-1} \det\left( \left(\widetilde{f}_i(m_j) \right)_{1 \leq i \leq s, 1 \leq j \leq s} \right) 
\\
&= h(m_1) \det\left( \left(\widetilde{f}_i(m_j) \right)_{1 \leq i < s, 2 \leq j \leq s} \right)
\\
& = h(m_1) l_N(m_2 \wedge \cdots \wedge m_s)(f)
\\
& = l_N \circ \widehat{h}(m)(f) 
\end{align*}
where the second equality follows from $\widetilde{f}_s(m_i) = h(m_i) = 0$ for $2 \leq i \leq s$, 
the third equality follows from $m_2, \ldots, m_s \in N_0 \subseteq N$, 
and the last equality follows from $\widehat{h}(m) = m_2 \wedge \cdots \wedge m_s \otimes h(m_1)$. 
This completes the proof. 
\end{proof}

Let $\fn \in \cN(\cQ)$ and 
$l_{\fn} \colon \bigwedge^{r+\nu(\fn)}_R H^{1}_{\cF^{\fn}}(K, T) \to \bigcap^{r+\nu(\fn)}_R H^{1}_{\cF^{\fn}}(K, T)$ 
the canonical map. 
Put $\mu(\fn) = \sum_{i=1}^{r+\nu(\fn)-1}i$. 
Define a map $C_{\fn} \colon Y_{\fn} \to X_{\fn}$ by $C_\fn = (-1)^{\mu(\fn)} l_\fn \otimes j_{\fn}$. 

\begin{proposition}\label{cstark}
The maps $C_{\fn} \colon Y_{\fn} \to X_{\fn}$ induce an isomorphism 
\begin{align*}
C \colon {\bf SS}_{r}(T, \cF, \cQ)' \simeq {\bf SS}_{r}(T, \cF, \cQ). 
\end{align*}
\end{proposition}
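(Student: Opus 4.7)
The plan is first to check that the maps $C_\fn$ commute with the transition maps $\Psi_{\fn,\fm}$ and $\Phi_{\fn,\fm}$, so that they induce a well-defined map $C$ on the inverse limits, and then to verify that $C$ is an isomorphism by evaluating at a core vertex.

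For compatibility, I would reduce to the case of a single-prime step $\fn = \fm\fq$ using Lemma \ref{compatible} and the analogous factorization property of $\Psi_{\fn,\fm}$ in \cite{MR2}. In this case, Theorem \ref{pt} and Lemma \ref{local} provide an exact sequence
\begin{align*}
0 \to H^1_{\cF^\fm}(K, T) \to H^1_{\cF^\fn}(K, T) \xrightarrow{h} H^1_{/f}(K_\fq, T)
\end{align*}
whose last term is free of rank one. By construction, $\Psi_{\fn,\fm}$ is built from the map $\widehat{h}$ of \cite[Proposition A.1]{MR2} (composed with the identifications $j_\fn$, $j_\fm$), while by Remark \ref{sign} the map $\Phi_{\fn,\fm}$ carries no additional sign over a single prime and is built from the dual map $\widetilde{h}$. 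Lemma \ref{comparison} then yields the identity
\begin{align*}
l_\fm \circ \widehat{h} = (-1)^{s-1}\, \widetilde{h} \circ l_\fn
\end{align*}
with $s = r+\nu(\fn)$, and the elementary computation $\binom{r+\nu(\fn)}{2} - \binom{r+\nu(\fn)-1}{2} = s-1$ shows that the sign $(-1)^{s-1}$ is exactly cancelled by the prefactors $(-1)^{\mu(\fn)}$ and $(-1)^{\mu(\fm)}$ appearing in the definitions of $C_\fn$ and $C_\fm$. Hence $C_\fm \circ \Psi_{\fn,\fm} = \Phi_{\fn,\fm} \circ C_\fn$.

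Given compatibility, the $C_\fn$ induce a map $C$ on inverse limits. To see that $C$ is an isomorphism, fix a core vertex $\fn \in \cN(\cQ)$. By Theorem \ref{core}, the projection ${\bf SS}_r(T,\cF,\cQ) \to X_\fn$ is an isomorphism, and the analogous statement for ${\bf SS}_r(T,\cF,\cQ)'$ from \cite{MR2} identifies this module with $Y_\fn$. By Lemma \ref{freelemma}, $H^1_{\cF^\fn}(K,T)$ is a free $R$-module, so $l_\fn$ is an isomorphism, and since $j_\fn$ is also an isomorphism, $C_\fn \colon Y_\fn \to X_\fn$ is an isomorphism; via the commutative squares established above, so is $C$.

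The main obstacle is the sign bookkeeping in the compatibility step, where the sign $(-1)^{s-1}$ from the comparison between $\widehat{h}$ and $\widetilde{h}$, the global sign $(-1)^{\mu(\fn\fm^{-1})}$ appearing in Remark \ref{sign} (trivial for a single prime), and the prefactor $(-1)^{\mu(\fn)}$ in the definition of $C_\fn$ must all be reconciled. Once this is done, the remainder of the argument is a direct application of Theorem \ref{core} and Lemma \ref{freelemma}.
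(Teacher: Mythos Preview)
Your proposal is correct and follows essentially the same approach as the paper's proof: use Lemma \ref{comparison} to verify compatibility of the $C_\fn$ with the transition maps, and then invoke Theorem \ref{core} together with the analogous result \cite[Theorem 6.7]{MR2} and Lemma \ref{freelemma} at a core vertex to conclude that $C$ is an isomorphism. The paper's proof simply compresses the sign bookkeeping and the reduction to a single-prime step that you spell out in detail.
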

\begin{proof}
By Lemma \ref{comparison} and the definition of 
the transition maps $\Psi_{\fn, \fm}$ and $\Phi_{\fn, \fm}$, the maps $C_\fn$ induces a map 
$C \colon {\bf SS}_{r}(T, \cF, \cQ)' \to {\bf SS}_{r}(T, \cF, \cQ)$. 
By Lemma \ref{freelemma} or \cite[Proposition 3.3]{MR2}, the map $C_\fn \colon Y_{\fn} \to X_{\fn}$ is an isomorphism 
for any core vertex $\fn \in \cN(\cQ)$. 
Thus by Theorem \ref{core} and \cite[Theorem 6.7]{MR2}, the map $C$ is an isomorphism. 
\end{proof}

\begin{lemma}\label{compalem}
Let $A$ be a zero dimensional Gorenstein local ring, $M$ a finitely generated $A$-module, and $s$ a positive integer. 
Let 
\begin{align*}
l_M \colon \bigwedge^s_A M \to \bigcap^s_A M = \Hom_R \left( \bigwedge_A^s M^*, A \right) 
\end{align*}
denote the canonical map and $x \in \bigwedge^s_A M$. 
\begin{itemize}
\item[(1)] Let $J$ be an ideal of $A$. If $x \in J \bigwedge^s_A M$, 
then ${\rm im}(l_M(x)) \subseteq J$. 
\item[(2)] Suppose that there is a free $A$-submodule $M' \subseteq M$ of rank $s$ such that 
$x \in {\rm im}(\bigwedge^{s}_A M' \to \bigwedge^s_A M)$. Then $x \in {\rm im}(l_M(x))\bigwedge^s_A M$. 
\end{itemize}
\end{lemma}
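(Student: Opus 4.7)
For part (1), the approach is immediate from the definition of $l_M$ as a determinant pairing. The map $l_M$ is $A$-linear, so if $x = \sum_{k} a_k y_k$ with $a_k \in J$ and $y_k \in \bigwedge^s_A M$, then for every $f \in \bigwedge^s_A M^*$ we have $l_M(x)(f) = \sum_k a_k\, l_M(y_k)(f) \in J$. Hence the image ideal ${\rm im}(l_M(x))$ lies in $J$. No Gorenstein hypothesis is needed here.

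For part (2), the plan is to exploit injectivity of free modules over a zero-dimensional Gorenstein ring. Let $e_1, \ldots, e_s$ be an $A$-basis of $M'$. Since $\bigwedge^s_A M'$ is free of rank one on $e_1 \wedge \cdots \wedge e_s$, the assumption gives an element $a \in A$ so that the image of $x$ in $\bigwedge^s_A M$ equals $a\,(e_1 \wedge \cdots \wedge e_s)$. My goal reduces to showing $a \in {\rm im}(l_M(x))$, since then
\begin{align*}
x = a\,(e_1 \wedge \cdots \wedge e_s) \in {\rm im}(l_M(x)) \cdot \bigwedge_A^s M.
\end{align*}

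To produce $a$ as a value of $l_M(x)$, I would construct dual elements $e_1^*, \ldots, e_s^* \in M^* = \Hom_A(M,A)$ with $e_i^*(e_j) = \delta_{ij}$. Because $A$ is a zero-dimensional Gorenstein local ring, free $A$-modules are injective, so the inclusion $M' \hookrightarrow M$ admits an $A$-linear retraction $p\colon M \to M'$. Composing $p$ with the coordinate projections of $M' = \bigoplus_i A e_i$ gives the required $e_i^* \in M^*$. Then a direct computation from the definition of $l_M$ yields
\begin{align*}
l_M(x)(e_1^* \wedge \cdots \wedge e_s^*) = a \cdot \det\bigl(e_i^*(e_j)\bigr) = a,
\end{align*}
so $a \in {\rm im}(l_M(x))$ and part (2) follows.

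The only step that uses nontrivial structure is the construction of the retraction $p$, which is where the zero-dimensional Gorenstein hypothesis enters (through the injectivity of finitely generated free $A$-modules). Neither part involves a substantive obstacle; the main care is simply to make sure the determinant formula is unpacked correctly when computing $l_M(x)(e_1^* \wedge \cdots \wedge e_s^*)$.
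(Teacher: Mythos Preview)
Your proof is correct and essentially identical to the paper's: for (1) both use $A$-linearity of $l_M$, and for (2) both use that free modules over a zero-dimensional Gorenstein ring are injective to split $M' \hookrightarrow M$ (the paper writes this as a direct sum $M = M' \oplus N$, you phrase it as a retraction $p$), then evaluate $l_M(x)$ on the resulting dual basis to recover the coefficient $a$.
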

\begin{proof}
Since ${\rm im}(l_M(ay)) = {\rm im}(a \cdot l_M(y)) = a \cdot {\rm im}(l_M(y))$ for any 
$y \in \bigwedge^s_A M$ and $a \in A$, the assertion (1) holds. 
We will show the assertion (2). 
Since $A$ is a zero dimensional Gorenstein local ring, we can write $M = M' \oplus N$ 
for some $A$-submodule $N \subseteq M$. 
Let $y_1, \ldots, y_s$ be a basis of $M'$. We take the element $y_i^* \in M^*$ such that $y_i^*(y_i) = 1$, 
$y_i^*(y_j) = 0$ if $i \neq j$, and $y_i^*(N) = 0$.  
We write $x = ay_1 \wedge \cdots \wedge y_s$ for some $a \in A$. 
Then we have $l_M(x)(y_1^* \wedge \cdots \wedge y_s^*) = a$. 
This completes the proof.  
\end{proof}

\begin{lemma}\label{takebasis}
Let $0 \to N \to M \xrightarrow{h} F$ be an exact sequence of finitely generated $R$-modules, 
$F$ a free $R$-module of rank one, and $s$ a positive integer. 
Let $\widehat{h} \colon \bigwedge^s_R M \to \bigwedge^{s-1}N \otimes_R F$ denote the map 
defined in \cite[Proposition A.1]{MR2} and $x \in \bigwedge^s_R M$. 
Suppose that there is a free $R$-submodule $M' \subseteq M$ of rank $s$ such that 
$x \in {\rm im}\left( \bigwedge^{s}_R M' \to \bigwedge^s_R M \right)$. 
Then there is a free $R$-submodule $N' \subseteq M' \cap N$ of rank $s-1$ such that 
$\widehat{h}(y) \in {\rm im}\left( \bigwedge^{s-1}_R N' \to \bigwedge^{s-1}_R N \right) \otimes_R F 
\subseteq \bigwedge^{s-1}_R N \otimes_R F$. 
\end{lemma}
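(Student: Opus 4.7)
The plan is to choose a basis of $M'$ adapted to the restriction $h|_{M'}$ and then evaluate $\widehat{h}(x)$ on this basis using the explicit formula recorded in the proof of Lemma \ref{comparison}. Throughout I use the hypothesis of this subsection that $R$ is a principal artinian local ring, so every ideal of $R$ is principal.

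First, I would locate a good generator. The submodule $h(M') \subseteq F \simeq R$ is a principal ideal, say $h(M') = Re$. If $e = 0$, then $M' \subseteq N$ and any free $R$-submodule $N' \subseteq M'$ of rank $s - 1$ works, since then $\widehat{h}(x) = 0$. Assume $e \neq 0$. A Nakayama argument shows that some lift $m_1 \in M'$ of $e$ lies outside $\fm_R M'$: if every lift lay in $\fm_R M'$, then $e$ would lie in $\fm_R h(M') = \fm_R \cdot Re$, forcing $Re = 0$. Since $R$ is local and $M'$ is free, such an $m_1$ extends to a basis $m_1, m_2', \ldots, m_s'$ of $M'$. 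Writing $h(m_i') = b_i e$ for $i \geq 2$, I replace $m_i'$ by $m_i := m_i' - b_i m_1$. This is still a basis of $M'$, and $h(m_i) = 0$ for $i \geq 2$, so $m_i \in M' \cap N$. Set $N' := \bigoplus_{i=2}^{s} R m_i$, a free $R$-submodule of $M' \cap N$ of rank $s - 1$.

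Finally, writing $x = c \cdot m_1 \wedge m_2 \wedge \cdots \wedge m_s$ for some $c \in R$, the formula for $\widehat{h}$ established in the proof of Lemma \ref{comparison} gives
\begin{align*}
\widehat{h}(x) = c \cdot \widehat{h}(m_1 \wedge m_2 \wedge \cdots \wedge m_s) = c \cdot (m_2 \wedge \cdots \wedge m_s) \otimes h(m_1),
\end{align*}
which visibly lies in the image of $\bigwedge^{s-1}_R N' \otimes_R F \to \bigwedge^{s-1}_R N \otimes_R F$. The main subtlety is the invocation of Lemma \ref{comparison}'s formula in this last step: that lemma phrases the formula relative to a global splitting $M = Rm \oplus N_0$ with $N = Im \oplus N_0$, whereas $m_1$ was chosen only inside $M'$. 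The remedy is that since $M/N$ embeds as a principal ideal of $F$, such a global splitting of $M$ exists over a principal artinian local ring, and the formula we need depends only on having $m_2, \ldots, m_s \in N$; alternatively, one can rederive the formula directly from the definition of $\widehat{h}$ via Lemma \ref{base} applied to $F^* \to M^* \to N^* \to 0$, checking that whenever $m_2, \ldots, m_s \in N$ the value $\widehat{h}(m_1 \wedge \cdots \wedge m_s)$ is forced to equal $(m_2 \wedge \cdots \wedge m_s) \otimes h(m_1)$.
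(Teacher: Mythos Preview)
Your proof is correct and follows essentially the same approach as the paper: both arguments find a basis of $M'$ in which all but one basis element lies in $N$, set $N'$ to be the span of those $s-1$ elements, and then read off $\widehat{h}(x)$ from the explicit formula for $\widehat{h}$. The paper dispatches the first step in one line (``since $R$ is a principal artinian local ring, there is a basis $y_1,\ldots,y_s$ of $M'$ with $y_1,\ldots,y_{s-1}\in N$''), whereas you spell it out via the Nakayama argument and the column operation $m_i' \mapsto m_i' - b_i m_1$; this extra detail is fine and makes the structure-theory input explicit. Your closing caveat about the applicability of the formula from Lemma~\ref{comparison} is more cautious than necessary: once $m_2,\ldots,m_s \in N$, every term in the alternating-sum expression for $\widehat{h}(m_1\wedge\cdots\wedge m_s)$ except the first vanishes, which is exactly the direct rederivation you sketch.
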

\begin{proof}
Since $R$ is a principal artinian local ring, there is a basis $y_1, \ldots, y_s$ of $M'$ 
such that $y_1, \ldots, y_{s-1} \in N$. 
Put $N' = \sum_{i=1}^{s-1}Ry_i$. 
Then $N'$ is a free $R$-submodule of $M' \cap N$ of rank $s-1$ and 
$\widehat{h}(y_1 \wedge \cdots \wedge y_s) = 
(-1)^s y_1 \wedge \cdots \wedge y_{s-1} \otimes h(y_s) \in {\rm im}\left( \bigwedge^{s-1}_R N' 
\to \bigwedge^{s-1}_R N \right) \otimes_R F$. 
\end{proof}

The following proposition says that Theorem \ref{control} 
is a generalization of \cite[Theorem 8.5]{MR2}. 

\begin{proposition}\label{gen}
Let $\epsilon'$ be a basis of ${\bf SS}_{r}(T, \cF, \cQ)'$ and $k = L_R(R)$. 
Then 
\begin{align*}
\partial \varphi_{\epsilon'}(t) 
= \max \{ n \in \{0,1, \ldots, k-1, \infty\} \mid I_{t}(C(\epsilon')) \subseteq \fm_R^n\}. 
\end{align*}
Here $\partial \varphi_{\epsilon'} \colon \bZ_{\geq 0} \to \bZ_{\geq 0} \cup \{ \infty \}$ 
is the map defined in \cite[Definition 8.1]{MR2}. 
\end{proposition}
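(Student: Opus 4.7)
The plan is to reduce the proposition to a direct consequence of Theorem \ref{control} and \cite[Theorem 8.5]{MR2} via the isomorphism $C$ of Proposition \ref{cstark}. Since $R$ is a principal artinian local ring of length $k$, every ideal of $R$ equals $\fm_R^n$ for a unique $n \in \{0, 1, \ldots, k-1, \infty\}$ (with the convention $\fm_R^\infty = 0$), so the right-hand side of the claimed equality is simply the $\fm_R$-adic valuation of the ideal $I_t(C(\epsilon'))$.

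By Proposition \ref{cstark}, $C(\epsilon')$ is a basis of ${\bf SS}_r(T, \cF, \cQ)$. Applying Theorem \ref{control} gives
\[
I_t(C(\epsilon')) = {\rm Fitt}_{t, R}\bigl(H^1_{\cF^*}(K, T^*(1))^*\bigr).
\]
Over a principal artinian local ring, every finitely generated $R$-module is non-canonically isomorphic to its Matlis dual (as noted in the second remark of the introduction), so Fitting ideals are invariant under $M \mapsto M^*$; hence the right-hand side may be replaced by ${\rm Fitt}_{t, R}(H^1_{\cF^*}(K, T^*(1)))$. On the other hand, \cite[Theorem 8.5]{MR2}, read through the classification of finitely generated modules over a principal ideal local ring in terms of elementary divisors, identifies $\partial\varphi_{\epsilon'}(t)$ with the $\fm_R$-adic valuation of ${\rm Fitt}_{t, R}(H^1_{\cF^*}(K, T^*(1)))$. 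Combining these two descriptions yields the desired equality.

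The main obstacle is the last identification: matching the definition of $\partial\varphi_{\epsilon'}(t)$ in \cite[Definition 8.1]{MR2} (phrased in terms of $\fm_R$-divisibility of $\epsilon'_\fn$ inside the modules $Y_\fn$ for $\nu(\fn) = t$) with the Fitting-ideal description above. This amounts to a careful bookkeeping check: one must track the sign $(-1)^{\mu(\fn)}$ and the identification $j_\fn$ entering the definition of the comparison map $C_\fn$, and compare the $\fm_R$-adic valuation of $\epsilon'_\fn \in Y_\fn$ with that of $i_\fn(C(\epsilon')_\fn)$ via Lemma \ref{compalem}. At a core vertex $\fn$ the module $H^1_{\cF^\fn}(K,T)$ is free by Lemma \ref{freelemma}, so both parts of Lemma \ref{compalem} apply and give an exact match; the general case reduces to this one since the minimum defining $\partial\varphi_{\epsilon'}(t)$ is attained at ideals dividing a core vertex, just as in the proof of Theorem \ref{control}.
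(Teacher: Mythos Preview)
Your main strategy --- invoke Theorem \ref{control} for $C(\epsilon')$ and \cite[Theorem 8.5]{MR2} for $\epsilon'$, then compare the two via the Fitting ideals of the dual Selmer group --- is valid, but the paper takes a different and more direct route that never passes through the Selmer group. The paper shows that for each individual $\fm \in \cN(\cQ)$ and each $c \geq 0$, the conditions $\epsilon'_\fm \in \fm_R^c Y_\fm$ and ${\rm im}(i_\fm(C(\epsilon')_\fm)) \subseteq \fm_R^c$ are equivalent; the proposition then follows by taking the minimum over $\fm$ with $\nu(\fm)=t$. The forward implication is Lemma \ref{compalem}(1). For the reverse, the paper picks a core vertex $\fn$ with $\fm \mid \fn$ and applies Lemma \ref{takebasis} repeatedly along the chain from $\fn$ down to $\fm$ to produce a free $R$-submodule of $H^1_{\cF^\fm}(K,T)$ of rank $r+\nu(\fm)$ whose top exterior power contains (the image of) $\epsilon'_\fm$; then Lemma \ref{compalem}(2) applies at $\fm$ itself. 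Your last paragraph gestures at this direct approach but misses the essential role of Lemma \ref{takebasis}: the point is not that the comparison is easy at a core vertex, but that the free-submodule hypothesis needed for Lemma \ref{compalem}(2) can be transported from a core vertex down to any $\fm$ dividing it.

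The two approaches buy different things. Yours is shorter once both control theorems are available, but it inverts the logical purpose of the proposition: the paper presents Proposition \ref{gen} precisely to show that Theorem \ref{control} \emph{recovers} \cite[Theorem 8.5]{MR2}, so one wants an argument that does not already assume the latter. The paper's termwise comparison establishes an equality of Stark-system invariants independent of any Selmer-theoretic interpretation.
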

\begin{proof}
We fix an isomorphism $H^1_{/f}(K_\fq, T) \simeq R$ for each prime $\fq \in \cQ$. 
Then we get an isomorphism $i_\fn \colon X_\fn \simeq \bigcap^{r + \nu(\fn)}_R H^1_{\cF^\fn}(K, T)$ 
for each ideal $\fn \in \cN(\cQ)$. 
Let $c$ be a non-negative integer and $\fm \in \cN(\cQ)$. 
Then we only need to show that the following assertions are equivalent. 
\begin{itemize}
\item[(i)] $\epsilon_\fm' \in \fm_R^c Y_\fm$. 
\item[(ii)] ${\rm im} \left( i_\fm(C(\epsilon')_\fm) \right) = {\rm im} \left( i_\fm(C_\fm(\epsilon'_\fm)) \right) 
\subseteq \fm^c_R$. 
\end{itemize}
By Lemma \ref{compalem} (1), (i) implies (ii). 
We will show that (ii) implies (i). 
Since we assume that $\cN(\cQ)$ has a core vertex, there is a core vertex $\fn \in \cN(\cQ)$ with $\fm \mid \fn$. 
Since $H^1_{\cF^\fn}(K, T)$ is a free $R$-module of rank $r + \nu(\fn)$, by Lemma \ref{takebasis}, 
there is a free $R$-submodule $M \subseteq H^1_{\cF^\fm}(K, T)$ of rank $r + \nu(\fm)$ such that 
$\epsilon_\fn' \in {\rm im}\left(\bigwedge^{r+\nu(\fm)}_R M \to \bigwedge^{r+\nu(\fm)}_R H^1_{\cF^\fm}(K, T) \right) 
\otimes_R \det(W_\fm')$. Thus by Lemma \ref{compalem} (2), we see that (ii) implies (i). 
\end{proof}

\section{Stark Systems over Gorenstein Local Rings}

In this section, we assume that $R$ is a complete Gorenstein local ring and 
that $T$ satisfies Hypothesis \ref{standardhyp}. 
Since $R$ is a complete Gorenstein local ring, there is an increasing sequence 
$J_1 \subseteq J_2 \subseteq J_3 \subseteq \cdots$ of ideals of $R$ 
such that $R/J_n$ is a zero dimensional Gorenstein local ring and 
a canonical map $R \to \varprojlim_{n}R/J_n$ is an isomorphism 
where the inverse limit is taken with respect to canonical maps $R/J_{n+1} \to R/J_n$. 
In fact, since a Gorenstein local ring is Cohen-Macaulay, there is a 
regular sequence $x_1, \ldots, x_d \in \fm_R$ such that $\sqrt{(x_1, \ldots, x_d)} = \fm_R$. 
Put $J_n = (x_1^n, \ldots, x_d^n)$ for each positive integer $n$. 
Since $x_1^n, \ldots, x_d^n$ is a regular sequence and $\sqrt{J_n} = \fm_R$, 
$R/J_n$ is a zero dimensional Gorenstein local ring. 
Furthermore the map $R \to \varprojlim_{n}R/J_n$ is an isomorphism since $R$ is a complete noetherian local ring. 
Throughout this section, we fix such a sequence $\{J_n\}_{n \geq 1}$ of ideals of $R$. 
 
Let $n$ be a positive integer and $\cF_{n}$ a Selmer structure on $T/J_n T$ such that 
\begin{align*}
H^1_{\cF_{n}}(K_\fq, T/J_{n}T) = H^1_{(\cF_{n+1})_{R/J_{n}}}(K_\fq, T/J_{n}T) 
\end{align*}
for any prime $\fq$ of $K$. 
Then we have $r := \chi(\cF_n) = \chi(\cF_{n+1})$ for any positive integer $n$. 
In order to use the results of the previous section, we assume that $r \geq 0$ and that 
the Selmer structure $\cF_{n}$ is cartesian for all positive integer $n$. 
Put $\cF = \{\cF_n \}_{n \geq 1}$. 

\begin{remark}\label{indeprem}
Let $S$ be a zero dimensional Gorenstein local ring and $\pi \colon R \to S$ a surjective ring homomorphism. 
Since the kernel of $\pi$ is an open ideal, there is a positive integer $n$ such that $I_n \subseteq \ker(\pi)$. 
Hence the collection of the Selmer structures $\cF$ defines a Selmer structure $\cF_S$ on $T \otimes_R S$. 
Note that $\chi(\cF_S) = r$ and $\cF_S$ is cartesian by Lemma \ref{gorencar}.  
\end{remark}

\begin{remark}\label{indeprem2}
Let $S$ be a complete Gorenstein local ring and $\pi \colon R \to S$ a surjective ring homomorphism. 
Take an increasing sequence $\{J_{S, n} \}_{n \geq 1}$ of ideals of $S$ 
such that $S/J_{S, n}$ is a zero dimensional Gorenstein local ring and the map 
$S \to \varprojlim_{n} S/J_{S, n}$ is an isomorphism. 
By Remark \ref{indeprem}, we get a Selmer structure $\cF_{S, n}$ on $T \otimes_R S/J_{S, n}$ 
for each positive integer $n$. 
Then we have $\chi(\cF_{S, n}) = r$, the Selmer structure $\cF_{S, n}$ is cartesian, and 
\begin{align*}
H^1_{\cF_{S, n}}(K_\fq, T \otimes_R S/J_{S, n}S) = H^1_{(\cF_{S, n+1})_{R/J_{n}}}(K_\fq, T \otimes_R S/J_{S, n}S) 
\end{align*}
for any prime $\fq$ of $K$ and positive integer $n$. 
We denote by $\cF_S$ the collection $\{\cF_{S, n}\}_{n \geq 1}$ of the Selmer structures. 
\end{remark}

\begin{example}\label{iwasawa}
Let $\cO$ be the ring of integers of a finite extension of the field $\bQ_p$ 
and $\cT$ a free $\cO$-module of finite rank with an $\cO$-linear 
continuous $G_K$-action which is unramified outside a finite set of places of $K$. 
Let $d$ be a positive integer and $L$ a free $R := \cO[[X_1, \ldots, X_{d}]]$-module of rank one with 
an $R$-linear continuous $G_K$-action which is unramified outside primes above $p$. 
Put $T = \cT \otimes_\cO L$ and $J_n = (p^n, X_1^n, \ldots, X_d^n)$. 
Suppose that the following conditions hold: 
\begin{itemize}
\item the module $\cT$ satisfies the conditions (H.1), (H.2), and (H.3),
\item the module $L \otimes_R R/(X_1, \cdots, X_d)$ has trivial $G_K$-action, 
\item the module $(\cT \otimes_{\bZ_p} \bQ_p/\bZ_p)^{\cI_\fq}$ is divisible for every prime $\fq \nmid p$ of $K$,  
\item $H^2(K_\fp, \cT) = 0$ for each prime $\fp \mid p$ of $K$.
\end{itemize}
For a positive integer $n$, we define a Selmer structure $\cF_n$ on $T/J_n T$ by the following data:
\begin{itemize}
\item $\Sigma(\cF_{n}) := \{v \mid p \infty \} \cup \{ \fq \mid \text{$\cT$ is ramified at $\fq$} \}$, 
\item $H^1_{\cF_n}(K_\fq, T/ J_n T) = H^1_{\rm ur}(K_\fq, T/J_n T)$ 
for each prime $\fq \nmid p$ of $K$, 
\item $H^1_{\cF_n}(K_\fp, T/ J_n T) = H^1(K_\fp, T/ J_n T)$ 
for each prime $\fp \mid p$ of $K$. 
\end{itemize}
Since $(\cT \otimes_{\bZ_p} \bQ_p/\bZ_p)^{\cI_\fq}$ is divisible and 
$L^{\cI_\fq} = L$ for every prime $\fq \nmid p$ of $K$, a canonical map 
$(T/J_{n+1} T)^{\cI_\fq} \to (T/J_n T)^{\cI_\fq}$ is surjective. 
Note that the cohomological dimension of $\cD_\fq/\cI_\fq \simeq \widehat{\bZ}$ is one. 
Therefore a canonical map $H^1_{\rm ur}(K_\fq, T/J_{n+1} T) \to H^1_{\rm ur}(K_\fq, T/J_n T)$ is surjective. 
By the inflation-restriction exact sequence, we have an isomorphism 
$H^1_{/\cF_n}(K_\fq, T/J_n T) \simeq H^1(\cI_\fq, T/J_n T)^{{\rm Fr_\fq} = 1}$. 
Hence we see that the map 
\begin{align*}
H^1_{/\cF_n}(K_\fq, T/\fm_R T) \to H^1_{/\cF_n}(K_\fq, T/J_n T)
\end{align*}
induced by an injection $\Bbbk \to R$ is injective. 
Let $\fp \mid p$ be a prime of $K$. 
Since the cohomological dimension of $\cD_\fp$ is $2$, we have 
\begin{align*}
H^2(K_\fp, T) \otimes_R R/(X_1, \ldots, X_d) \simeq H^2(K_\fp, \cT). 
\end{align*}
Since $H^2(K_\fp, \cT) = 0$, we have $H^2(K_\fp, T) = 0$ by Nakayama's Lemma, 
and a map $H^1(K_\fp, T/ J_{n+1} T) \to H^1(K_\fp, T/ J_n T)$ is surjective for any positive integer $n$. 
Furthermore, by \cite[Theorem 5.4]{MR2}, we have 
\begin{align*}
\chi(\cF_{n}) = \sum_{v \mid \infty} {\rm corank}_\cO \left( H^0(K_v, \cT^*(1)) \right) \geq 0
\end{align*}
where $v$ runs through all the infinite places of $K$. 
Hence the collection of the Selmer structures $\{ \cF_n \}_{n \geq 1}$ satisfies all the conditions 
in the paragraph before Remark \ref{indeprem}. 
\end{example}

\subsection{Stark Systems over Gorenstein Local Rings}
We fix a decreasing sequence $\cQ_1 \supseteq \cQ_2 \supseteq \cQ_3 \supseteq \cdots$ such that 
$\cQ_n$ is an infinite subset of $\cP_{L_R(R/J_n)}(\cF_n)$ and $\cN(\cQ)$ has a core vertex for $\cF_n$. 
For example, we take 
$\cQ_n = \cP_{L_R(R/J_n)}(\cF_n) \setminus \left( \Sigma(\cF_1) \cup \cdots \cup \Sigma(\cF_{n}) \right)$ 
for each positive integer $n$. 
We denote by $\cQ$ the collection of the sequence $\{ \cQ_n \}_{n \geq 1}$. 

Let $n$ be a positive integer. 
Since $\cN(\cQ_{n+1})$ has a core vertex for $\cF_{n+1}$, the map $R/J_{n+1} \to R/J_{n}$ induces a map 
${\bf SS}_{r}(T/J_{n +1} T, \cF_{n+1}, \cQ_{n+1}) \to {\bf SS}_{r}(T/J_{n}T, \cF_n, \cQ_{n+1})$ 
as in the paragraph before Proposition \ref{surj}. 
Since a restriction map ${\bf SS}_{r}(T/J_n T, \cF_{n}, \cQ_{n}) \to {\bf SS}_{r}(T/J_{n}T, \cF_n, \cQ_{n+1})$ 
is an isomorphism by Theorem \ref{core}, we get a map 
\begin{align*}
\phi_{n+1, n} \colon {\bf SS}_{r}(T/J_{n +1} T, \cF_{n+1}, \cQ_{n+1}) \to {\bf SS}_{r}(T/J_{n}T, \cF_n, \cQ_{n}). 
\end{align*}
We define the module ${\bf SS}_{r}(T, \cF, \cQ)$ of Stark systems of rank $r$ for $\cF$ by 
\begin{align*}
{\bf SS}_{r}(T, \cF, \cQ) &:= \varprojlim_{n \geq 1} {\bf SS}_r(T/J_n T, \cF_n, \cQ_n) 
\end{align*}
where the inverse limit is taken with respect to the maps $\phi_{n+1, n}$. 

Let $S$ be a complete Gorenstein local ring and $\pi \colon R \to S$ a surjective ring homomorphism. 
Take an increasing sequence $J_{S, 1} \subseteq J_{S, 2} \subseteq J_{S, 3} \subseteq \cdots$ 
of ideals of $S$ such that $S/J_{S, n}$ is a zero dimensional Gorenstein local ring 
and the map $S \to \varprojlim_{n} S/J_{S, n}$ is an isomorphism. 
Let $\cF_S$ be the collection of the Selmer structures defined in Remark \ref{indeprem2}. 
For a positive integer $n$, we fix a positive integer $n_S$ such that $n_S \geq n$ and that 
the map $\pi \colon R \to S$ induces a map $ \pi_n \colon R/J_{n_S} \to S/J_{S, n}$. 
Then we have the map 
\begin{align*}
\phi_{\pi_n} \colon {\bf SS}_{r}(T/J_{n_S} T, \cF_{n_S}, \cQ_{n_S}) \to 
{\bf SS}_{r}(T \otimes_R S/J_{S, n}, \cF_{S, n}, \cQ_{n_S}). 
\end{align*}
Hence we get a canonical map 
\begin{align*}
\phi_\pi \colon {\bf SS}_{r}(T, \cF, \cQ) \to 
{\bf SS}_{r}(T \otimes_R S, \cF_{S}, \{ \cQ_{n_S} \}_{n \geq 1}). 
\end{align*}
For simplicity, we also denote by $\cQ$ the subsequence $\{\cQ_{n_S}\}_{n \geq 1}$ of $\cQ = \{\cQ_n \}_{n \geq 1}$. 

\begin{theorem}\label{main1}
\begin{itemize}
\item[(1)] The $R$-module ${\bf SS}_{r}(T, \cF, \cQ)$ is free of rank one. 
\item[(2)] Let $S$ be a complete Gorenstein local ring and $\pi \colon R \to S$ a surjective ring homomorphism. 
Then the map $\phi_\pi$ induces an isomorphism 
\begin{align*}
{\bf SS}_{r}(T, \cF, \cQ) \otimes_R S \simeq {\bf SS}_{r}(T \otimes_R S, \cF_S, \cQ). 
\end{align*}
\end{itemize}
\end{theorem}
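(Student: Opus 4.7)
The plan is to reduce both assertions to the zero-dimensional case of Theorem \ref{core} and Proposition \ref{surj}, and then to pass to the limit using the completeness $R \simeq \varprojlim_n R/J_n$. The engine is Proposition \ref{surj}(1), applied at each level to the surjection $R/J_{n+1}\to R/J_n$ (and, for part (2), to the compatible surjections $\pi_n\colon R/J_{n_S}\to S/J_{S,n}$).

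For (1), Theorem \ref{core} already tells me that each ${\bf SS}_r(T/J_n T,\cF_n,\cQ_n)$ is a free $R/J_n$-module of rank one. The hypothesis $H^1_{\cF_n}(K_\fq,T/J_n T)=H^1_{(\cF_{n+1})_{R/J_n}}(K_\fq,T/J_n T)$, together with the inclusion $\cQ_{n+1}\subseteq\cQ_n$ and the fact that a restriction between sets of primes both containing a core vertex induces an isomorphism on Stark modules (Theorem \ref{core}), lets me identify $\phi_{n+1,n}$, up to composition with an invertible restriction, with the base-change map of Proposition \ref{surj}(1). In particular $\phi_{n+1,n}$ is surjective and becomes an isomorphism after $\otimes_{R/J_{n+1}} R/J_n$. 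I would then pick a basis $\epsilon_1$ at level one and successively lift it to a basis $\epsilon_{n+1}$ of ${\bf SS}_r(T/J_{n+1}T,\cF_{n+1},\cQ_{n+1})$ with $\phi_{n+1,n}(\epsilon_{n+1})=\epsilon_n$; such a lift exists because a surjection of free rank-one modules attached to $R/J_{n+1}\twoheadrightarrow R/J_n$ always lifts units to units. The resulting system $\epsilon=(\epsilon_n)_n$ yields an element of ${\bf SS}_r(T,\cF,\cQ)$, and the multiplication map $R\to {\bf SS}_r(T,\cF,\cQ)$, $r\mapsto r\epsilon$, is injective since $r\epsilon=0$ forces $r\in J_n$ for all $n$ and $\bigcap_n J_n=0$, and surjective because any $\eta=(\eta_n)_n$ can be written levelwise as $\eta_n=r_n\epsilon_n$ with $(r_n)_n\in\varprojlim_n R/J_n=R$.

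For (2), I would repeat the same strategy over $S$. Fix the sequence $\{J_{S,n}\}$ and the integers $n_S$ as in the setup preceding the theorem. Proposition \ref{surj}(1) applied to each $\pi_n$ yields an isomorphism
\[
{\bf SS}_r(T/J_{n_S}T,\cF_{n_S},\cQ_{n_S})\otimes_{R/J_{n_S}} S/J_{S,n}\simeq {\bf SS}_r(T\otimes_R S/J_{S,n},\cF_{S,n},\cQ_{n_S}),
\]
and in particular sends a basis to a basis. Taking $\epsilon$ from part (1), the levelwise images $\phi_{\pi_n}(\epsilon_{n_S})$ form a coherent system of bases, so $\phi_\pi(\epsilon)$ is a basis of ${\bf SS}_r(T\otimes_R S,\cF_S,\cQ)$ as a free $S$-module of rank one by the same limit argument as in part (1) applied over $S=\varprojlim_n S/J_{S,n}$. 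Since $\epsilon$ was a basis of ${\bf SS}_r(T,\cF,\cQ)$ over $R$, the induced map ${\bf SS}_r(T,\cF,\cQ)\otimes_R S\to{\bf SS}_r(T\otimes_R S,\cF_S,\cQ)$ sends basis to basis and is therefore an isomorphism.

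The main obstacle I anticipate is the bookkeeping required to identify the transition maps $\phi_{n+1,n}$ with the zero-dimensional base-change maps of Proposition \ref{surj}: the sets $\cQ_n$ are allowed to shrink with $n$, the Selmer structures live on different base modules, and the restriction isomorphisms from Theorem \ref{core} must be used to line everything up. Once this compatibility is verified, both parts are a formal inverse-limit computation exploiting that every finite level is free of rank one and that the transition maps induce isomorphisms after base change.
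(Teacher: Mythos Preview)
Your proposal is correct and follows essentially the same approach as the paper: the paper's proof is the single sentence ``This theorem follows immediately from Theorem \ref{core} and Proposition \ref{surj} (1),'' and you have correctly unpacked the inverse-limit argument that sentence encodes. Your anticipated ``bookkeeping obstacle'' is already dissolved by the paper's definition of $\phi_{n+1,n}$ just before the theorem: it is \emph{defined} as the composite of the base-change map of Proposition \ref{surj} (for $R/J_{n+1}\to R/J_n$) with the inverse of the restriction isomorphism from Theorem \ref{core}, so no separate compatibility check is needed.
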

\begin{proof}
This theorem follows immediately from Theorem \ref{core} and Proposition \ref{surj} (1). 
\end{proof}

Let $i$ be a non-negative integer and $\epsilon \in {\bf SS}_{r}(T, \cF, \cQ)$. 
For a positive integer $n$, let $\phi_n \colon {\bf SS}_r(T, \cF, \cQ) \to {\bf SS}_r(T/J_nT, \cF_n, \cQ_n)$ 
denote the projection map. 
By Proposition \ref{surj} (2), we can define an ideal $I_{i}(\epsilon)$ of $R$ by 
\begin{align*}
I_{i}(\epsilon) := \varprojlim_{n \geq 1}I_{i}(\phi_{n}(\epsilon)) 
= \varprojlim_{n \geq 1} \left( \sum_{\fn \in \cN(\cQ_n), \nu(\fn) = i} {\rm im}(\phi_n(\epsilon)_\fn) \right) 
\end{align*}
where the inverse limit is taken with respect to the maps $R/J_{n+1} \to R/J_n$. 

\begin{definition}
We define the dual Selmer group $H^{1}_{\cF^*}(K, T^*(1))$ associated with the collection of the Selmer structures $\cF$ by 
\begin{align*}
H^{1}_{\cF^*}(K, T^*(1)) := \varinjlim_{n \geq 1} H^{1}_{\cF_n^*}(K, (T/J_n T)^*(1))
\end{align*}
where the injective limit is taken with respect to the maps induced 
by the Cartier duals of the maps $T/J_{n+1} T \to T/J_{n} T$. 
\end{definition}

Set $M^\vee := \Hom(M, \bQ_p/\bZ_p)$ for any $\bZ_p$-module $M$. 
The following theorem is our main result. 

\begin{theorem}\label{main2}
Let $i$ be a non-negative integer. 
\begin{itemize}
\item[(1)] If $\epsilon$ is a basis of ${\bf SS}_{r}(T, \cF, \cQ)$, then we have 
\begin{align*}
I_{i}(\epsilon) = {\rm Fitt}_{i, R}\left( H^{1}_{\cF^*}(K, T^*(1))^\vee \right). 
\end{align*}
\item[(2)] Let $S$ be a complete Gorenstein local ring and $\pi \colon R \to S$ a surjective ring homomorphism. 
Then we have $I_i(\epsilon)S = I_i(\phi_\pi(\epsilon))$ for any $\epsilon \in {\bf SS}_r(T, \cF, \cQ)$. 
\end{itemize}
\end{theorem}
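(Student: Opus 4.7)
The plan is to bootstrap from Theorem \ref{control} at each Artinian level $R/J_n$ through duality identifications, and then pass to the inverse limit via Krull's intersection theorem. Set $N := H^1_{\cF^*}(K, T^*(1))$, $N_n := H^1_{\cF_n^*}(K, (T/J_nT)^*(1))$, $M := N^\vee$, and $M_n := N_n^\vee$, so that $M = \varprojlim_n M_n$ by Pontryagin duality.

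First I would verify $N_n \simeq N[J_n]$ as $R$-modules. For $m \geq n$, Lemma \ref{dmult} (2) applied over the Artinian ring $R/J_m$ with the ideal $J_n/J_m$, together with the canonical isomorphism $(T/J_mT)^*(1)[J_n/J_m] \simeq (T/J_nT)^*(1)$ and the compatibility of the tower $\{\cF_m\}$, gives $N_n \simeq H^1_{\cF_m^*}(K,(T/J_mT)^*(1))[J_n/J_m]$; taking the direct limit in $m$ yields $N_n \simeq N[J_n]$. Since $J_n$ is finitely generated, dualizing the defining exact sequence of $N[J_n]$ gives $M_n \simeq M/J_nM$; combined with the Matlis isomorphism $N_n^* \simeq N_n^\vee$ from Remark \ref{dual}, this yields a functorial isomorphism $N_n^* \simeq M/J_nM$ of $R/J_n$-modules. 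Theorem \ref{control} at level $n$ and base change of Fitting ideals then produce
\[
I_i(\phi_n(\epsilon)) = {\rm Fitt}_{i, R/J_n}(N_n^*) = {\rm Fitt}_{i, R/J_n}(M/J_nM) = \bigl({\rm Fitt}_{i,R}(M) + J_n\bigr)/J_n.
\]
Before passing to the inverse limit I would observe that $M$ is finitely generated over $R$: $M_1$ is a finite set, so $M/\fm_R M$ is finite, while $M$ is $\fm_R$-adically complete since each $J_n$ is $\fm_R$-primary and hence $\{J_n\}$ is cofinal with $\{\fm_R^n\}$; topological Nakayama then applies. Taking the inverse limit of the displayed equality inside $R = \varprojlim R/J_n$ and invoking Krull's intersection theorem on the $R$-module $R/{\rm Fitt}_{i,R}(M)$ gives $I_i(\epsilon) = \bigcap_n \bigl({\rm Fitt}_{i,R}(M) + J_n\bigr) = {\rm Fitt}_{i,R}(M)$, which proves (1).

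For (2), at each level $n$ Proposition \ref{surj} (2) applied to the surjection $R/J_{n_S} \to S/J_{S,n}$ gives $I_i(\phi_{S,n}(\phi_\pi(\epsilon))) = I_i(\phi_{n_S}(\epsilon)) \cdot S/J_{S,n}$, which by (1) and Fitting-ideal base change equals the image of the finitely generated ideal $I_i(\epsilon) S$ in $S/J_{S,n}$. Since finitely generated ideals in the complete Noetherian local ring $S$ are closed, Krull's intersection identifies $I_i(\epsilon) S$ with the inverse limit of its images in $S/J_{S,n}$, while the other side converges by definition to $I_i(\phi_\pi(\epsilon))$, yielding $I_i(\phi_\pi(\epsilon)) = I_i(\epsilon) S$. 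The main obstacle is the initial step---the identification $N_n \simeq N[J_n]$ and hence $N_n^* \simeq M/J_n M$---which requires threading Lemma \ref{dmult} through the tower of Selmer structures $\{\cF_m\}$ and carefully matching Matlis duality on each Artinian quotient with Pontryagin duality on the limit; once these identifications are set up, both parts reduce to the zero-dimensional case plus Krull's intersection in the topology generated by $\{J_n\}$.
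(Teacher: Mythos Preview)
Your proof is correct and follows essentially the same route as the paper: reduce to the zero-dimensional case via Theorem~\ref{control}, identify $H^{1}_{\cF_n^*}(K,(T/J_nT)^*(1))^* \simeq H^{1}_{\cF^*}(K,T^*(1))^\vee \otimes_R R/J_n$ through Lemma~\ref{dmult} and Remark~\ref{dual}, and pass to the limit using that ideals in a complete noetherian local ring are closed (your Krull's intersection step). You are more explicit than the paper in two places---threading Lemma~\ref{dmult} through the tower to obtain $N_n \simeq N[J_n]$, and verifying finite generation of $M$ via topological Nakayama---but the underlying argument is the same.
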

\begin{proof}
By Lemma \ref{dmult}, the canonical map 
\begin{align*}
H^{1}_{\cF^*}(K, T^*(1))^\vee \otimes_R R/J_n \to 
H^{1}_{\cF_n^*}(K, (T/J_n T)^*(1))^\vee
\end{align*}
is an isomorphism for any positive integer $n$. 
Thus we have 
\begin{align*}
{\rm Fitt}_{i, R} \left( H^{1}_{\cF^*}(K, T^*(1))^\vee \right) R/J_n 
&= {\rm Fitt}_{i, R/J_n} \left(H^{1}_{\cF_n^*}(K, (T/J_n T)^*(1))^\vee \right) 
\\
&= {\rm Fitt}_{i, R/J_n} \left(H^{1}_{\cF_n^*}(K, (T/J_n T)^*(1))^* \right) 
\\
&= I_{i}(\phi_{n}(\epsilon))
\\
&= I_{i}(\epsilon)R/J_n
\end{align*}
where the second equality follows from Remark \ref{dual}, 
the third equality follows from Theorem \ref{control}, and 
the fourth equality follows from Proposition \ref{surj} (2). 
Since $R$ is a complete noetherian local ring, all the ideals of $R$ are closed. 
Hence we have $I_{i}(\epsilon) = {\rm Fitt}_{i, R}(H^{1}_{\cF^*}(K, T^*(1))^\vee)$. 
The second assertion follows from Proposition \ref{surj} (2). 
\end{proof}

\begin{remark}\label{compare-dvr}
Suppose that $R$ is a discrete valuation ring. 
Let ${\bf SS}_r(T, \cF, \cQ)'$ be the module of Stark systems 
defined in \cite[Definition 7.1]{MR2} and $n$ a positive integer. 
Let
\begin{align*}
C_n \colon {\bf SS}_r(T/\fm^n_R T, \cF_n, \cQ_n)' \to {\bf SS}_r(T/\fm^n_R T, \cF_n, \cQ_n)   
\end{align*}
be the isomorphism defined in Proposition \ref{cstark}. 
By the definition of the map $C_n$, we have the following commutative diagram: 
\begin{align*}
\xymatrix{
{\bf SS}_r(T/\fm^{n +1}_R T, \cF_{n+1}, \cQ_{n+1})' \ar[r]^{C_{n+1}} \ar[d] & 
{\bf SS}_r(T/ \fm^{n+1}_R T, \cF_n, \cQ_{n+1}) \ar[d] 
\\
{\bf SS}_r(T/\fm^{n}_R T, \cF_n, \cQ_n)' \ar[r]^{C_{n}} & 
{\bf SS}_r(T/ \fm^{n}_R T, \cF_n, \cQ_n). 
}
\end{align*}
Thus by \cite[Proposition 7.3]{MR2}, the maps $C_n$ induce an isomorphism 
\begin{align*}
C \colon {\bf SS}_r(T, \cF, \cQ)' \simeq {\bf SS}_r(T, \cF, \cQ). 
\end{align*}
Furthermore, if $\epsilon' \in {\bf SS}_r(T, \cF, \cQ)'$ is non-zero, we see that 
\begin{align*}
\partial \varphi_{\epsilon'}(t) = \max \{ n \in \bZ_{\geq 0} \cup \{\infty\} \mid I_t(C(\epsilon')) \subseteq \fm^n_R \}
\end{align*}
for any non-negative integer $t$ by Proposition \ref{gen}. 
This shows that Theorem \ref{main2} implies \cite[Theorem 8.7]{MR2}. 
\end{remark}

\section{Controlling Selmer Groups using $\Lambda$-adic Stark Systems}

Let $\cO$ be the ring of integers of a finite extension of the field $\bQ_p$ 
and $\cT$ a free $\cO$-module of finite rank with 
an $\cO$-linear continuous $G_K$-action which is unramified outside a finite set of places of $K$. 
We write $K_{\infty}$ for the cyclotomic $\bZ_{p}$-extension of $K$. 
Fix a topological generator $\gamma$ of $\Gal(K_{\infty}/K)$. 
Let $R$ be the ring of formal power series $\cO[[X]]$. 
By using the element $\gamma$, we get an isomorphism 
$\cO[[\Gal(K_{\infty}/K)]] \simeq R$, $\gamma \mapsto 1 + X$. 
It induces an $R$-module structure on $\cO[[\Gal(K_\infty/K)]]$. 

Following the notation in \cite[Section 5.3]{MR1}, 
we write $\Lambda$ instead of $R$ and put $\bT := \cT \otimes_\Lambda \cO[[\Gal(K_\infty/K)]]$. 
We assume the following conditions:
\begin{itemize}
\item $(\cT/\varpi \cT)^{G_K} = (\cT^*(1)[\varpi])^{G_K} = 0$ 
and $\cT/\varpi \cT$ is an absolutely irreducible $\Bbbk[G_K]$-module where $\varpi$ is a uniformizer of $\cO$, 
\item the module $\cT$ satisfies the conditions (H.2) and (H.3),
\item the module $(\cT \otimes \bQ_p/\bZ_p)^{\cI_\fq}$ is divisible for every prime $\fq \nmid p$ of $K$,  
\item $H^2(K_\fp, \cT) = 0$ for each prime $\fp \mid p$ of $K$.
\end{itemize} 
Let $n$ be a positive integer and $J_n = (p^n, X^n)$. 
Let $\cF_{n}$ be the Selmer structure on $\bT/J_n \bT$ defined in Example \ref{iwasawa}.  
Note that $\chi(\cF_n) = \sum_{v \mid \infty} {\rm corank}_\cO \left(H^0(K_v, \cT^*) \right)$ 
where $v$ runs through all the infinite places of $K$. 
Set $\cF = \{ \cF_n\}_{n \geq 1}$ and $r = \chi(\cF_n)$. 
We can define the module of Stark systems by 
\begin{align*}
{\bf SS}_r(\bT) := {\bf SS}_r(\bT, \cF, \{ \cP_{n^2}(\cF_n) \}_{n \geq 1}) 
= \varprojlim_{n \geq 1} {\bf SS}_r (\bT/J_n \bT, \cF_{n}, \cP_{n^2}(\cF_n)). 
\end{align*} 
Note that ${\bf SS}_r(\bT)$ is a free $\Lambda$-module of rank one by Theorem \ref{main1}. 
We call an element of ${\bf SS}_r(\bT)$ a $\Lambda$-adic Stark system. 
We say that a $\Lambda$-adic Stark system is primitive 
if it is a basis of ${\bf SS}_r(\bT)$. 

\begin{remark}\label{kolyvagin}
Assume that $r = 1$. 
Let ${\bf KS}(\bT)$ be the module of $\Lambda$-adic Kolyvagin systems defined in \cite[Chapter 5]{MR1} or \cite{Bu}. 
Then we can construct an isomorphism ${\bf SS}_1(\bT) \simeq {\bf KS}(\bT)$ (See \cite[Section 3.1.2]{Bu}). 
Thus we have a map from the module of Euler systems to the module of $\Lambda$-adic Stark systems ${\bf SS}_{1}(\bT)$ 
by \cite[Theorem 5.3.3]{MR1}. 
\end{remark}

\begin{example}\label{cyclo}
Suppose that $K$ is a totally real number field. 
Let $\chi \colon G_K \to \cO^{\times}$ be an even character of finite prime-to-$p$ order 
and let $\ff_{\chi}$ be the conductor of $\chi$. 
Put $\cT = \cO(1) \otimes_\cO \chi^{-1}$.
Assume the following hypotheses:
\begin{itemize}
\item $(p, \ff_\chi)=1$, 
\item $K$ is unramified at all primes above $p$, 
\item $\chi({\rm Frob}_{\fq}) \neq 1$ for any prime $\fp$ of $K$ above $p$. 
\end{itemize}
Then $\cT$ satisfies all the conditions in this section and we have $r = [K \colon \bQ]$ by \cite[Corollary 5.6]{MR2}. 
Thus if $K = \bQ$, then we get a $\Lambda$-adic Stark system from the Euler systems of cyclotomic units. 
By using the analytic class number formula, we show that this Stark system is primitive. 
More generally, K. B\"uk\"uboduk constructed an $\bL$-restricted Kolyvagin system 
from conjectural Rubin-Stark units in \cite{bus}. 
Using this fact, we get a primitive $\Lambda$-adic Stark system which is related to Rubin-Stark units if 
we assume Rubin-Stark conjecture. 
\end{example}

\begin{example}
Suppose that $\cO = \bZ_p$. 
Let $A$ be an abelian variety of dimension $d$ defined over $K$ 
and let $\cT$ be the Tate module $T_p(A) = \varprojlim A[p^n]$. 
Assume the following hypotheses:
\begin{itemize}
\item $A$ has good reduction at all the primes of $K$ above $p$, 
\item the image of $G_K$ in ${\rm Aut}(A[p]) \simeq {\rm GL}_{2d}(\bF_p)$ contains ${\rm GSp}_{2d}(\bF_p)$, 
\item $p \nmid 6 \prod_{\fq \nmid p}c_{\fq}$ where $c_\fq \in \bZ_{>0}$ is the Tamagawa factor at $\fq \nmid p$, 
\item $A^d(K_\fq)[p] = 0$ for any prime $\fp \mid p$ where $A^d$ is the dual abelian variety of $A$. 
\end{itemize}
Then $\cT$ satisfies all the conditions in this section and 
we have $r = d [K \colon \bQ]$ by \cite[Proposition 5.7]{MR2}. 
If $K = \bQ$ and $d = 1$, then the Kato's Euler systems gives rise to a $\Lambda$-adic Stark system 
(See \cite[Theorem 6.2.4]{MR1}, \cite[Proposition 6.2.6]{MR1}, and \cite[Proposition 4.2]{Bu}). 
\end{example}

By using \cite[Proposition B.3.4]{Ru}, we have $H^{1}(K_\fq, \bT) = H^1_{\rm ur}(K_\fq, \bT)$ 
for each prime $\fq \nmid p$ of $K$. 
Hence we can define a Selmer structure $\cF_{\Lambda}$ on $\bT$ by the following data:
\begin{itemize}
\item $\Sigma(\cF_{\Lambda}) = \Sigma := \{v \mid p \infty \} \cup \{\fq \mid \text{$\fq$ is ramified at $\cT$}\}$, 
\item $H^{1}_{\cF_{\Lambda}}(K_\fq, \bT) = H^{1}(K_\fq, \bT)$ for each prime $\fq$ of $K$. 
\end{itemize}

\begin{remark}
Since $H^{1}(K_\fq, \bT) = H^1_{\rm ur}(K_\fq, \bT)$ 
for each prime $\fq \nmid p$ of $K$, we have 
\begin{align*}
H^1_{\cF_{\Lambda}}(K, \bT) = H^1(K, \bT) = H^1(K_{\Sigma}/K, \bT) 
\end{align*}
where $K_\Sigma$ is the maximal extension of $K$ unramified outside $\Sigma$ and 
$H^1(K_\Sigma/K, M) = H^1(\Gal(K_\Sigma/K), M)$ for any continuous $\Gal(K_\Sigma/K)$-module $M$. 
Furthermore, the maps $\bT \to \bT/J_n\bT$ induce isomorphisms 
$H^1(K, \bT) \simeq \varprojlim_{n \geq 1} H^1_{\cF_n}(K, \bT/J_n \bT)$ and 
$H^1_{\cF^*}(K, \bT^*(1)) \simeq H^1_{\cF_\Lambda^*}(K, \bT^*(1))$. 
Note that
$H^1(K_{\Sigma}/K, \bT)$, $H^2(K_{\Sigma}/K, \bT)$, and $H^{1}_{\cF^{*}_{\Lambda}}(K, \bT^*(1))^\vee$ 
are finitely generated $\Lambda$-modules (See \cite[Section 3]{PR}). 
\end{remark}

\begin{remark}\label{free}
Note that the map 
$H^1(K, \bT) \otimes_{\Lambda} \Lambda/(X) \to H^1(K_\Sigma/K, \cT)$
induced by the map $\bT \to \cT$ is injective. 
Since $\cT$ satisfies the condition (H.1), 
the module $H^1(K_\Sigma/K, \cT)$ is a free $\cO$-module, which implies that 
so is $H^1(K, \bT) \otimes_{\Lambda} \Lambda/(X)$. 
Let $s = {\rm rank}_\cO \left( H^1(K, \bT) \otimes_{\Lambda} \Lambda/(X) \right)$. 
Since $\cT^{G_K} = 0$, the map $H^1(K, \bT) \xrightarrow{\times X} H^1(K, \bT)$ is injective. 
Thus we have a map of exact sequences:
\begin{align*}
\xymatrix{
0 \ar[r] & \Lambda^s \ar[r]^{\times X} \ar[d]^{j} & \Lambda^s \ar[r] \ar[d]^{j} & \cO^s \ar[r] \ar[d]^{\simeq} & 0 
\\
0 \ar[r] & H^1(K, \bT) \ar[r]^{\times X} & H^1(K, \bT) \ar[r] & H^1(K, \bT) \otimes_{\Lambda} \Lambda/(X) \ar[r] & 0.  
}
\end{align*}
Hence we have $X \ker(j) = \ker(j)$ and $X {\rm coker}(j) = {\rm coker}(j)$. 
Hence the map $j$ is an isomorphism and 
$H^1(K, \bT)$ is a free $\Lambda$-module of rank $s$. 
\end{remark}

\begin{proposition}\label{freerank}
Let $\fq$ be a height one prime of $\Lambda$ with $\fq \neq p\Lambda$ and 
let $S_\fq$ denote the integral closure of $\Lambda/\fq$ in its field of fractions. 
If $H^2(K_\Sigma/K, \bT)[\fq]$ is finite, 
then we have 
\begin{align*}
{\rm rank}_\Lambda \left( H^1(K, \bT) \right) 
= r + {\rm corank}_{S_\fq} \left( H^1_{\cF_{\rm can}^*}(K, (\cT \otimes S_\fq)^*(1)) \right) 
\end{align*}
where $\cF_{\rm can}$ is the canonical Selmer structure defined in Example \ref{cansel}. 
\end{proposition}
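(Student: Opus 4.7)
The plan is to reduce the computation of ${\rm rank}_{\Lambda} H^{1}(K, \bT)$ to the level of the DVR $S_{\fq}$ by specializing, and then apply the core-rank formula of Example \ref{cancore}. Set $\bT_{\fq} := \bT \otimes_{\Lambda} S_{\fq} \simeq \cT \otimes S_{\fq}$, a free $S_{\fq}$-module of finite rank. Since $H^{1}(K, \bT)$ is $\Lambda$-free by Remark \ref{free},
\[
{\rm rank}_{\Lambda} H^{1}(K, \bT) = {\rm rank}_{S_{\fq}} \bigl( H^{1}(K, \bT) \otimes_{\Lambda} S_{\fq} \bigr).
\]
Choosing $f$ to generate $\fq \Lambda_{\fq}$, the exact sequence $0 \to \bT \xrightarrow{f} \bT \to \bT/f\bT \to 0$ yields
\[
0 \to H^{1}(K_{\Sigma}/K, \bT)/ f H^{1}(K_{\Sigma}/K, \bT) \to H^{1}(K_{\Sigma}/K, \bT/f\bT) \to H^{2}(K_{\Sigma}/K, \bT)[f] \to 0,
\]
and the finiteness hypothesis on $H^{2}(K_{\Sigma}/K, \bT)[\fq]$, together with the fact that $\Lambda/\fq \subseteq S_{\fq}$ share the same fraction field, gives ${\rm rank}_{S_{\fq}} H^{1}(K_{\Sigma}/K, \bT_{\fq}) = {\rm rank}_{\Lambda} H^{1}(K, \bT)$.

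Next, I apply Example \ref{cancore} to $\bT_{\fq}$ over $S_{\fq}$, having verified the prerequisites $(\bT_{\fq}/\fm_{S_{\fq}} \bT_{\fq})^{G_{K}} = (\bT_{\fq}^{*}(1)[\fm_{S_{\fq}}])^{G_{K}} = 0$ from the absolute irreducibility of $\cT/\varpi \cT$:
\[
\chi(\cF_{\rm can}, \bT_{\fq}) = \sum_{v \mid \infty} {\rm corank}_{S_{\fq}} H^{0}(K_{v}, \bT_{\fq}^{*}(1)) + \sum_{\fp \mid p} {\rm rank}_{S_{\fq}} H^{2}(K_{\fp}, \bT_{\fq}).
\]
The assumption $H^{2}(K_{\fp}, \cT) = 0$ and Nakayama's lemma (using that the cohomological dimension of $K_{\fp}$ is at most $2$) yield $H^{2}(K_{\fp}, \bT) = 0$, hence also $H^{2}(K_{\fp}, \bT_{\fq}) = 0$ by the same base-change argument. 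At infinite places, complex conjugation acts trivially on $\Gal(K_{\infty}/K)$, so each $H^{0}(K_{v}, \bT_{\fq}^{*}(1))$ has the same $S_{\fq}$-corank as $H^{0}(K_{v}, \cT^{*}(1)) \otimes_{\cO} S_{\fq}$, and summing gives $r = \sum_{v \mid \infty} {\rm corank}_{\cO} H^{0}(K_{v}, \cT^{*}(1))$. The divisibility of $(\cT \otimes_{\bZ_{p}} \bQ_{p}/\bZ_{p})^{\cI_{\fq}}$ at primes $\fq \nmid p$ ensures that $H^{1}_{\cF_{\rm can}}(K, \bT_{\fq}) = H^{1}(K_{\Sigma}/K, \bT_{\fq})$.

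Finally, unravelling Definition \ref{core rank} over the DVR $S_{\fq}$ gives $\chi(\cF_{\rm can}, \bT_{\fq}) = {\rm rank}_{S_{\fq}} H^{1}(K_{\Sigma}/K, \bT_{\fq}) - {\rm corank}_{S_{\fq}} H^{1}_{\cF_{\rm can}^{*}}(K, \bT_{\fq}^{*}(1))$, and combining with the identifications above yields the proposition. The main obstacle I anticipate is the base change of cohomology from $\bT$ to $\bT_{\fq}$ in the first step: since $S_{\fq}$ need not be flat over $\Lambda/\fq$ when $\Lambda/\fq$ is non-regular, the argument must be carried out at the level of ranks rather than of modules, exploiting that $\Lambda/\fq \subseteq S_{\fq}$ is a finite extension with common fraction field. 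A secondary delicate point is that Definition \ref{core rank} is phrased via residue-field dimensions, so identifying $\chi(\cF_{\rm can}, \bT_{\fq})$ with the stated rank--corank difference requires hypothesis (H.1) together with the freeness of $\bT_{\fq}$ in order to rule out spurious torsion contributions.
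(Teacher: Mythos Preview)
Your overall strategy matches the paper's: specialize to $S_{\fq}$, use freeness of $H^{1}(K,\bT)$ together with the finiteness of $H^{2}(K_{\Sigma}/K,\bT)[\fq]$ to compare ranks, and then invoke the core-rank formula of Example~\ref{cancore} and the rank/corank interpretation of $\chi(\cF_{\rm can})$ over a DVR. The paper simply outsources the two delicate steps to \cite[Proposition~5.3.14]{MR1} and \cite[Corollary~5.2.6]{MR1}; you try to argue them directly.

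There is, however, a genuine gap in your step~6. The divisibility of $(\cT\otimes_{\bZ_p}\bQ_p/\bZ_p)^{\cI_{\fq'}}$ does \emph{not} give $H^{1}_{\cF_{\rm can}}(K,\bT_{\fq})=H^{1}(K_{\Sigma}/K,\bT_{\fq})$. At a prime $\fq'\nmid p$ in $\Sigma$, the quotient $H^{1}_{/f}(K_{\fq'},\bT_{\fq})$ has $S_{\fq}$-rank equal to $\dim H^{0}(K_{\fq'},V_{\fq}^{*}(1))$, which is typically positive (already for $\cT=\cO(1)$ the quotient picks up the valuation class). What the divisibility hypothesis actually buys in this paper is only the compatibility of the unramified local conditions in Example~\ref{iwasawa}, not that $H^{1}_{f}=H^{1}$ locally.

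The correct bridge, which is what \cite[Proposition~5.3.14]{MR1} provides, is different: since $H^{1}(K_{\fq'},\bT)=H^{1}_{\rm ur}(K_{\fq'},\bT)$ for every $\fq'\nmid p$ (this is stated in the paper just before the definition of $\cF_{\Lambda}$), the image of $H^{1}(K,\bT)\otimes_{\Lambda}\Lambda/\fq$ in $H^{1}(K_{\Sigma}/K,\bT_{\fq})$ actually lands in $H^{1}_{\cF_{\rm can}}(K,\bT_{\fq})$. Your own long-exact-sequence step shows this image has finite index in $H^{1}(K_{\Sigma}/K,\bT_{\fq})$; since $H^{1}_{\cF_{\rm can}}\subseteq H^{1}(K_{\Sigma}/K,\bT_{\fq})$, the two have the same $S_{\fq}$-rank. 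With this correction your argument goes through, and the final ``unravelling'' should then read $\chi(\cF_{\rm can},\bT_{\fq})={\rm rank}_{S_{\fq}}H^{1}_{\cF_{\rm can}}(K,\bT_{\fq})-{\rm corank}_{S_{\fq}}H^{1}_{\cF_{\rm can}^{*}}(K,\bT_{\fq}^{*}(1))$, which is precisely \cite[Corollary~5.2.6]{MR1}.
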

\begin{proof}
Note that $\cT \otimes_{\cO} S_\cQ$ satisfies the conditions (H.1), (H.2), and (H.3). 
Since $H^2(K_\Sigma/K, \bT)[\fq]$ is finite, by the same proof of \cite[Proposition 5.3.14]{MR1}, 
one can show that the map 
\begin{align*}
H^1(K, \bT) \otimes_{\Lambda} \Lambda/\fq \to H^1_{\cF_{\rm can}}(K, \cT \otimes_{\cO} S_{\fq})
\end{align*}
induced by the map $\bT \to \bT/\fq \bT \to \cT \otimes_\cO S_\fq$ is injective and 
its cokernel is finite. 
Since $H^1(K, \bT)$ is a free $\Lambda$-module by Remark \ref{free}, we have 
\begin{align*}
{\rm rank}_{\Lambda} \left( H^1(K, \bT) \right)
= {\rm rank}_{S_\fq} \left( H^1_{\cF_{\rm can}}(K, \cT \otimes_\cO S_\fq) \right). 
\end{align*}
Since $H^2(K_\fp, \cT) = 0$ for any prime $\fp \mid p$ of $K$, 
the core rank of $\cF_{\rm can}$ on $S_\fq$ is $r$ by Example \ref{cancore}. 
Thus by the same proof of \cite[Corollary 5.2.6]{MR1}, we have 
\begin{align*}
{\rm rank}_{S_\fq} \left( H^1_{\cF_{\rm can}}(K, \cT \otimes_\cO S_\fq) \right) 
= r + {\rm corank}_{S_\fq} \left(H^1_{\cF_{\rm can}^*}(K, (\cT \otimes_\cO S_\fq)^*(1)) \right). 
\end{align*}
\end{proof}

\begin{proposition}\label{rankleop}
The $\Lambda$-module $H^1_{\cF_{\Lambda}^*}(K, \bT^*(1))^\vee$ is torsion if and only if 
$H^1(K, \bT)$ is a free $\Lambda$-module of rank $r$. 
\end{proposition}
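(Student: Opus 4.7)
Since $H^1(K,\bT)$ is already a free $\Lambda$-module by Remark \ref{free}, the proposition reduces to the equivalence ${\rm rank}_\Lambda H^1(K,\bT)=r$ if and only if $H^1_{\cF_\Lambda^*}(K,\bT^*(1))^\vee$ is $\Lambda$-torsion. My plan is to identify both sides with the torsion-ness of $H^2(K_\Sigma/K,\bT)$, using the $\Lambda$-adic Poitou--Tate sequence together with the global Euler characteristic for the cyclotomic deformation.

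First I would pass to the inverse limit of the finite-level Poitou--Tate sequences from Theorem \ref{pt} (applied to the quotients $\bT/J_n\bT$), or equivalently invoke the $\Lambda$-adic Poitou--Tate sequence as in \cite{PR}. Combined with the vanishing of $H^0(K_\Sigma/K,\bT)$ and $H^0(K_\Sigma/K,\bT^*(1))$ (which follow from Hypothesis \ref{standardhyp}~(H.1) via Nakayama), dualising the defining inclusion $H^1_{\cF_\Lambda^*}(K,\bT^*(1)) \hookrightarrow H^1(K_\Sigma/K,\bT^*(1))$ and invoking local Tate duality $H^1(K_v,\bT^*(1))^\vee \simeq H^1(K_v,\bT)$ yields
\begin{align*}
H^1_{\cF_\Lambda^*}(K,\bT^*(1))^\vee \simeq \ker\Bigl( H^2(K_\Sigma/K,\bT) \to \bigoplus_{v\in\Sigma} H^2(K_v,\bT) \Bigr).
\end{align*}

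Next I would verify that each local factor $H^2(K_v,\bT)$ is $\Lambda$-torsion. For $v\mid p$, the assumption $H^2(K_\fp,\cT)=0$ combined with Nakayama gives $H^2(K_v,\bT)=0$; for $v\mid\infty$ we have $H^2(K_v,\bT)=0$ since $p$ is odd. For a finite $v\in\Sigma$ with $v\nmid p$, local Tate duality identifies $H^2(K_v,\bT)$ with $H^0(K_v,\bT^*(1))^\vee$; since $K_\infty/K$ is unramified at $v$, the inertia $\cI_v$ acts trivially on the cyclotomic factor $\cO[[\Gal(K_\infty/K)]]$, while the Frobenius acts on that factor by multiplication by the $\Lambda$-unit $[{\rm Fr}_v]\in\Gamma\subseteq\Lambda^\times$. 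Because this element is transcendental over $\cO$, a direct linear-algebra computation shows that $H^0(K_v,\bT^*(1)) = (\cT^*(1)^{\cI_v}\otimes_\cO\Lambda)^{{\rm Fr}_v=1}$ is $\Lambda$-cotorsion. Consequently $H^1_{\cF_\Lambda^*}(K,\bT^*(1))^\vee$ and $H^2(K_\Sigma/K,\bT)$ share the same $\Lambda$-rank, so one is $\Lambda$-torsion iff the other is.

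Finally, the $\Lambda$-adic global Euler characteristic formula for the cyclotomic deformation (see \cite{PR}) supplies
\begin{align*}
{\rm rank}_\Lambda H^1(K_\Sigma/K,\bT) - {\rm rank}_\Lambda H^2(K_\Sigma/K,\bT) = \sum_{v\mid\infty} {\rm corank}_\cO H^0(K_v,\cT^*(1)) = r,
\end{align*}
the last equality being the very definition of $r$. Hence ${\rm rank}_\Lambda H^1(K,\bT)=r$ iff $H^2(K_\Sigma/K,\bT)$ has $\Lambda$-rank zero iff it is $\Lambda$-torsion (as it is finitely generated) iff $H^1_{\cF_\Lambda^*}(K,\bT^*(1))^\vee$ is $\Lambda$-torsion. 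The main technical obstacle is the local computation at ramified primes outside $p$ in the second step: showing that the transcendence of $[{\rm Fr}_v]$ in $\Lambda$ forces $H^0(K_v,\bT^*(1))$ to be $\Lambda$-cotorsion, thereby preventing the local $H^2(K_v,\bT)$ from contributing to the $\Lambda$-rank.
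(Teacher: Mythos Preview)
Your argument is correct and takes a genuinely different route from the paper's proof. The paper does not work directly with $\Lambda$-adic Poitou--Tate duality or the $\Lambda$-adic Euler characteristic. Instead it chooses a height-one prime $\fq\neq p\Lambda$ with $H^2(K_\Sigma/K,\bT)[\fq]$ finite, invokes the specialisation comparison of \cite[Proposition~5.3.14]{MR1} to relate $H^1_{\cF_\Lambda^*}(K,\bT^*(1))^\vee\otimes\Lambda/\fq$ to $H^1_{\cF_{\rm can}^*}(K,(\cT\otimes S_\fq)^*(1))^\vee$ up to finite error, and then appeals to Proposition~\ref{freerank} (which computes ${\rm rank}_\Lambda H^1(K,\bT)$ as $r$ plus the $S_\fq$-corank of that specialised dual Selmer group). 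Thus the paper reduces the equivalence to a finite-level core-rank identity already available in its framework, whereas you establish the same equivalence by identifying $H^1_{\cF_\Lambda^*}(K,\bT^*(1))^\vee$ with $\Sha^2(K_\Sigma/K,\bT)$, killing the local $H^2$'s, and invoking the Iwasawa-theoretic Euler characteristic. Your approach is more cohomological and self-contained once one grants the $\Lambda$-adic Euler characteristic formula from \cite{PR}; the paper's approach stays closer to the Selmer-theoretic machinery developed earlier and recycles Proposition~\ref{freerank}. One small point: your description of $H^0(K_v,\bT^*(1))$ as $(\cT^*(1)^{\cI_v}\otimes_\cO\Lambda)^{{\rm Fr}_v=1}$ should really be read on the Pontryagin-dual (compact) side, but the substance of the argument---that $[{\rm Fr}_v]$ has infinite order in $\Gamma$, so the characteristic polynomial of Frobenius evaluated at $[{\rm Fr}_v]$ is a nonzero element of $\Lambda$---is exactly right.
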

\begin{proof}
Since $H^2(K_\Sigma/K, \bT)$ is a finitely generated $\Lambda$-module, 
there is a hight one prime $\fq \neq p \Lambda$ of $\Lambda$ such that $H^2(K_\Sigma/K, \bT)[\fq]$ is finite. 
Then by the same proof of \cite[Proposition 5.3.14]{MR1}, 
we show that the kernel and the cokernel of the map 
\begin{align*}
H^1_{\cF_{\Lambda}^*}(K, \bT^*(1))^\vee \otimes_{\Lambda} \Lambda/\fq \to 
H^1_{\cF_{\rm can}^*}(K, (\cT \otimes_{\cO} S_{\fq})^*(1))^\vee
\end{align*}
induced by the map $(\cT \otimes_{\cO} S_{\fq})^*(1) \to (\bT/\fq \bT)^*(1) \to \bT^*(1)$ are finite. 
Thus if $H^1(K, \bT)$ is free of rank $r$, then 
$H^1_{\cF_{\Lambda}^*}(K, \bT^*(1))^\vee \otimes_{\Lambda} \Lambda/\fq$ is finite by Proposition \ref{freerank}.  
Hence $H^1_{\cF_{\Lambda}}(K, \bT^*(1))^\vee$ is a torsion $\Lambda$-module. 
If $H^1_{\cF_{\Lambda}^*}(K, \bT^*(1))^\vee$ is a torsion $\Lambda$-module, 
then there is a height one prime $\fq \neq p \Lambda$ of $\Lambda$ such that 
both $H^1_{\cF_{\Lambda}^*}(K, \bT^*(1))^\vee \otimes_{\Lambda} \Lambda/\fq$ 
and $H^2(K_\Sigma/K, \bT)[\fq]$ are finite. 
Thus by Remark \ref{free} and Proposition \ref{freerank}, $H^1(K, \bT)$ is a free $\Lambda$-module of rank $r$. 
\end{proof}

\begin{remark}
The assertion that $H^1_{\cF^*_{\Lambda}}(K, \bT^*(1))^\vee$ is a torsion $\Lambda$-module is a form 
of the weak Leopoldt conjecture. 
\end{remark}

\begin{lemma}\label{exact}
Let $n$ be a positive integer. 
Then we have an exact sequence
\begin{align*}
0 \to H^2(K_\Sigma/K, \bT)[J_n] &\to H^1(K, \bT) \otimes_\Lambda \Lambda/J_n \to H^1(K_\Sigma/K, \bT/J_n \bT). 
\end{align*}
\end{lemma}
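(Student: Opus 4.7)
The plan is to build the desired exact sequence from two long exact cohomology sequences: the one for $0 \to J_n \bT \to \bT \to \bT/J_n\bT \to 0$, and the one for the Koszul short exact sequence presenting the submodule $J_n\bT$.

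Since $p^n, X^n$ is a regular sequence in $\Lambda$ and $\bT$ is $\Lambda$-free, the Koszul sequence
\begin{align*}
0 \to \bT \xrightarrow{f} \bT \oplus \bT \xrightarrow{g} J_n\bT \to 0,
\end{align*}
with $f(x) = (-X^n x, p^n x)$ and $g(a, b) = p^n a + X^n b$, is exact as a sequence of continuous $G_K$-modules (since $f, g$ are $\Lambda$-linear, hence $G_K$-equivariant for the twisted action). Its long exact sequence yields
\begin{align*}
H^1(K, \bT)^{\oplus 2} \xrightarrow{H^1(g)} H^1(K_\Sigma/K, J_n\bT) \xrightarrow{\delta} H^2(K_\Sigma/K, \bT) \xrightarrow{H^2(f)} H^2(K_\Sigma/K, \bT)^{\oplus 2},
\end{align*}
and since $\ker(H^2(f)) = H^2(K_\Sigma/K, \bT)[J_n]$, the map $\delta$ identifies $H^1(K_\Sigma/K, J_n\bT)/\mathrm{im}(H^1(g))$ with $H^2(K_\Sigma/K, \bT)[J_n]$. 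Meanwhile, the long exact sequence for $0 \to J_n\bT \to \bT \to \bT/J_n\bT \to 0$ gives $H^1(K_\Sigma/K, J_n\bT) \xrightarrow{\iota} H^1(K, \bT) \to H^1(K_\Sigma/K, \bT/J_n\bT)$, so the kernel of the right map is $\mathrm{im}(\iota)$. The composition $\iota \circ H^1(g) : H^1(K, \bT)^{\oplus 2} \to H^1(K, \bT)$ is $(a, b) \mapsto p^n a + X^n b$, whose image equals $J_n H^1(K, \bT)$, so $\iota$ descends modulo $J_n$ to a well-defined map
\begin{align*}
H^2(K_\Sigma/K, \bT)[J_n] \;\cong\; H^1(K_\Sigma/K, J_n\bT)/\mathrm{im}(H^1(g)) \;\longrightarrow\; H^1(K, \bT)/J_n H^1(K, \bT)
\end{align*}
whose image equals the kernel of $H^1(K, \bT) \otimes_\Lambda \Lambda/J_n \to H^1(K_\Sigma/K, \bT/J_n\bT)$.

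It remains to prove injectivity. Tracing through the diagram, any $\tilde y \in H^1(K_\Sigma/K, J_n\bT)$ whose $\iota$-image lies in $J_n H^1(K, \bT)$ differs from an element of $\mathrm{im}(H^1(g))$ by an element of $\ker(\iota)$, and $\ker(\iota)$ is the image of the connecting map $H^0(K_\Sigma/K, \bT/J_n\bT) \to H^1(K_\Sigma/K, J_n\bT)$. Thus injectivity reduces to the vanishing $H^0(K_\Sigma/K, \bT/J_n\bT) = 0$. I establish this via the $G_K$-stable $X$-adic filtration on $\bT/J_n\bT$: writing $\bT = \cT \otimes_\cO \Lambda$ with the twisted $G_K$-action, the graded pieces $(X^k \bT + J_n\bT)/(X^{k+1}\bT + J_n\bT)$ for $0 \leq k < n$ are canonically $G_K$-isomorphic to $\cT/p^n\cT$ (because $(1+X)^{\chi(\sigma)}$ acts trivially on the factor $X^k$ modulo $X^{k+1}$, where $\chi$ is the projection $G_K \to \Gal(K_\infty/K)$). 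A further induction on the $\varpi$-adic filtration of $\cT/p^n\cT$, using left-exactness of invariants and the hypothesis $(\cT/\varpi\cT)^{G_K} = 0$, gives $(\cT/p^n\cT)^{G_K} = 0$, whence $(\bT/J_n\bT)^{G_K} = 0$. The main subtlety of the proof is this last filtration argument; once it is in place, the rest is a diagram chase.
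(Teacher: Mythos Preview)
Your proof is correct. The approach differs from the paper's in organization: the paper proceeds in two successive steps, first using the long exact sequence for $0 \to \bT \xrightarrow{\times X^n} \bT \to \bT/X^n\bT \to 0$ to obtain
\[
0 \to H^1(K,\bT)\otimes_\Lambda \Lambda/X^n \to H^1(K_\Sigma/K,\bT/X^n\bT) \to H^2(K_\Sigma/K,\bT)[X^n] \to 0,
\]
and then applying the snake lemma for multiplication by $p^n$ together with the sequence $0 \to \bT/X^n\bT \xrightarrow{\times p^n} \bT/X^n\bT \to \bT/J_n\bT \to 0$. You instead handle both generators of $J_n$ at once via the Koszul resolution $0 \to \bT \to \bT^{\oplus 2} \to J_n\bT \to 0$ combined with the sequence $0 \to J_n\bT \to \bT \to \bT/J_n\bT \to 0$. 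Both arguments ultimately reduce injectivity to the vanishing $(\bT/J_n\bT)^{G_K}=0$, which you spell out carefully via the $X$-adic and $\varpi$-adic filtrations while the paper just invokes the hypothesis $(\cT/\varpi\cT)^{G_K}=0$. Your Koszul packaging is slightly more conceptual and would generalize immediately to ideals with more generators; the paper's iterated approach is more hands-on but entirely equivalent here.
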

\begin{proof}
By using the exact sequence $0 \to \bT \xrightarrow{\times X^{\beta}} \bT \to \bT/ X^\beta \bT \to 0$, 
we obtain an exact sequence
\begin{align*}
0 \to H^1(K_{\Sigma}/K, \bT) \otimes_\Lambda \Lambda/X^n \Lambda \to H^1(K_{\Sigma}/K, \bT/X^n \bT) \to 
H^2(K_{\Sigma}/K, \bT)[X^n] \to 0. 
\end{align*}
Applying $- \otimes_{\Lambda/(X^n)} \Lambda/J_n$ to the above exact sequence, 
we get the following exact sequence:
\begin{align*}
H^1(K_{\Sigma}/K, \bT/X^n \bT)[p^n] &\to H^2(K_{\Sigma}/K, \bT)[J_n] 
\to H^1(K, \bT) \otimes_\Lambda \Lambda/J_n 
\to H^1(K_{\Sigma}/K, \bT/X^n \bT) \otimes_{\Lambda/(X^n)} \Lambda/J_n.   
\end{align*}
By using the exact sequence $0 \to \bT/X^n \bT \xrightarrow{\times p^n} \bT/X^n\bT \to \bT/J_n \bT \to 0$, 
we have an exact sequence 
\begin{align*}
0 \to H^1(K_{\Sigma}/K, \bT/X^n \bT) \xrightarrow{\times p^n}  
H^1(K_{\Sigma}/K, \bT/X^n \bT) \to H^1(K_{\Sigma}/K, \bT/J_n \bT) 
\end{align*}
since we assume that $(\bT/(\varpi, X)\bT)^{G_K} = (\cT/\varpi \cT)^{G_K} = 0$ 
where $\varpi$ is a uniformizer of $\cO$. Thus we get the desired exact sequence. 
\end{proof}

The following theorem is a generalization of \cite[Theorem 5.3.6]{MR1} and \cite[Theorem 5.3.10]{MR1}. 
However, the proof of this theorem is different from the proof in \cite{MR1}. 

\begin{theorem}\label{lambda-stark}
If $\epsilon$ is a primitive $\Lambda$-adic Stark system and $i$ is a non-negative integer, 
then we have 
\begin{align*}
I_i(\epsilon) = {\rm Fitt}_{i, \Lambda} \left( H^1_{\cF_\Lambda^*}(K, \bT^*(1))^\vee \right). 
\end{align*}
In particular, $H^1_{\cF_\Lambda^*}(K, \bT^*(1))^\vee$ 
is a torsion $\Lambda$-module if and only if 
there is a $\Lambda$-adic Stark system $\eta = \{ \eta^{(n)} \}_{n \geq 1}$ 
such that $\eta_1^{(n)} \neq 0$ for some positive integer $n$. 
\end{theorem}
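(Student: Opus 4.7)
The plan is to deduce both assertions almost directly from Theorem \ref{main2}, after identifying the two versions of the dual Selmer group.

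First I would apply Theorem \ref{main2}(1) to $R = \Lambda$, $T = \bT$, and the collection $\cF = \{\cF_n\}_{n \geq 1}$ of Selmer structures defined in Example \ref{iwasawa}, with the sequence $J_n = (p^n, X^n)$ and with the inverse system $\cQ = \{\cP_{n^2}(\cF_n)\}_{n\geq 1}$ used to define ${\bf SS}_r(\bT)$. The verification that these satisfy the hypotheses of Section 5 is carried out in Example \ref{iwasawa}, and $\chi(\cF_n) = r \geq 0$ is explicit. The theorem gives
\[
I_i(\epsilon) = {\rm Fitt}_{i, \Lambda}\left( H^1_{\cF^*}(K, \bT^*(1))^\vee \right)
\]
for a primitive $\epsilon$, where $H^1_{\cF^*}(K, \bT^*(1))$ is the direct limit. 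Substituting the isomorphism $H^1_{\cF^*}(K, \bT^*(1)) \simeq H^1_{\cF_\Lambda^*}(K, \bT^*(1))$ recorded in the remark just before Remark \ref{free} yields the stated formula.

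For the ``in particular'' clause I would use the standard fact that for a finitely generated module $M$ over the Noetherian integral domain $\Lambda = \cO[[X]]$, $M$ is $\Lambda$-torsion if and only if ${\rm Fitt}_{0,\Lambda}(M) \neq 0$; this is immediate from looking at a presentation $\Lambda^a \to \Lambda^b \to M \to 0$, since $M$ is torsion iff the matrix has rank $b$ over ${\rm Frac}(\Lambda)$, iff some $b \times b$ minor is non-zero. Applying this to $M = H^1_{\cF_\Lambda^*}(K, \bT^*(1))^\vee$ (which is finitely generated by the remark above), the torsionness is equivalent to $I_0(\epsilon) \neq 0$ for any primitive $\epsilon$.

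Next I would translate $I_0(\epsilon) \neq 0$ into the statement about Stark systems. Using the definition $I_0(\epsilon) = \varprojlim_n I_0(\phi_n(\epsilon))$ together with Theorem \ref{main2}(2), which gives $I_0(\epsilon) \cdot \Lambda/J_n = I_0(\phi_n(\epsilon))$, we see that $I_0(\epsilon) \neq 0$ is equivalent to $I_0(\phi_n(\epsilon)) \neq 0$ for some $n$. Since $\fn = 1$ is the unique element of $\cN(\cP_{n^2}(\cF_n))$ with $\nu(\fn) = 0$, the defining sum reduces to $I_0(\phi_n(\epsilon)) = {\rm im}(\phi_n(\epsilon)_1)$, and the latter is non-zero if and only if $\epsilon^{(n)}_1 := \phi_n(\epsilon)_1 \in \bigcap^r_{\Lambda/J_n} H^1_{\cF_n}(K, \bT/J_n\bT)$ is non-zero. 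Finally, by Theorem \ref{main1} the module ${\bf SS}_r(\bT)$ is free of rank one on a primitive $\epsilon$, so every $\eta \in {\bf SS}_r(\bT)$ has the form $\lambda\epsilon$ with $\lambda \in \Lambda$ and hence $\eta^{(n)}_1 = (\lambda \bmod J_n)\,\epsilon^{(n)}_1$; this shows ``some $\eta$ satisfies $\eta^{(n)}_1 \neq 0$ for some $n$'' is equivalent to ``$\epsilon^{(n)}_1 \neq 0$ for some $n$'', closing the chain.

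There is no real obstacle: the heavy lifting is already done in Theorem \ref{main2}. The only points that need care are (i) invoking the correct identification of $H^1_{\cF^*}$ with $H^1_{\cF_\Lambda^*}$, and (ii) tracking the bookkeeping of the inverse limit together with the equality $I_0(\epsilon)\cdot\Lambda/J_n = I_0(\phi_n(\epsilon))$ so that non-vanishing at a single finite level is upgraded to non-vanishing of the Fitting ideal in $\Lambda$.
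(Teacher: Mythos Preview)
Your proposal is correct and follows essentially the same route as the paper: apply Theorem~\ref{main2}(1) together with the identification $H^1_{\cF^*}(K,\bT^*(1))\simeq H^1_{\cF_\Lambda^*}(K,\bT^*(1))$ for the first assertion, and then reduce the torsion statement to $I_0(\epsilon)\neq 0$ and unwind this through the finite levels. The only cosmetic differences are that the paper invokes the inclusions ${\rm Ann}_\Lambda(M)^m\subseteq{\rm Fitt}_{0,\Lambda}(M)\subseteq{\rm Ann}_\Lambda(M)$ instead of your presentation-matrix argument, and cites Proposition~\ref{surj}(2) directly rather than Theorem~\ref{main2}(2) for the compatibility $I_0(\epsilon)\Lambda/J_n=I_0(\phi_n(\epsilon))$; your more explicit unpacking via the rank-one freeness of ${\bf SS}_r(\bT)$ is a harmless elaboration of what the paper leaves implicit.
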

\begin{proof}
The first assertion follows from $H^1_{\cF^*}(K, \bT^*(1)) \simeq H^1_{\cF_\Lambda^*}(K, \bT^*(1))$ and 
Theorem \ref{main2}. 
Since 
\begin{align*}
{\rm Ann}_{\Lambda}(M)^m \subseteq {\rm Fitt}_{0, \Lambda}(M) \subseteq {\rm Ann}_{\Lambda}(M)
\end{align*} 
for any $\Lambda$-module $M$ which is generated by $m$ elements, 
$H^1_{\cF_\Lambda^*}(K, \bT^*(1))^\vee$ is a torsion $\Lambda$-module if and only if $I_0(\epsilon) \neq 0 $. 
Furthermore, by Proposition \ref{surj} (2), 
there is a $\Lambda$-adic Stark system 
$\eta = \{ \eta^{(n)}\}_{n \geq 1}$ such that $\eta_1^{(n)} \neq 0$ for some positive integer $n$ 
if and only if $I_0(\epsilon) \neq 0 $. 
This completes the proof. 
\end{proof}

We will show that Theorem \ref{lambda-stark} implies 
\cite[Theorem 5.3.6]{MR1} and \cite[Theorem 5.3.10]{MR1}. 
Assume $r = 1$. 
Let $n$ be a positive integer. 
By Matlis duality, the map $l_1 \colon H^1_{\cF_n}(K, \bT/J_n \bT) \to 
\bigcap^{1}_{\Lambda/J_n}H^1_{\cF_n}(K, \bT/J_n \bT)$ is an isomorphism. 
Hence we have a map 
\begin{align*}
\Theta^{(n)} \colon {\bf SS}_1(\bT/J_n \bT, \cF_n, \cP_{n^2}(\cF_n)) \to 
\bigcap^{1}_{\Lambda/J_n}H^1_{\cF_n}(K, \bT/J_n\bT) \xrightarrow{l_1^{-1}} H^1_{\cF_n}(K, \bT/J_n\bT) 
\end{align*}
where the first map is a projection map. 
It is easy to see that the maps $\Theta^{(n)}$ induce a map $\Theta \colon {\bf SS}_1(\bT) \to H^1(K, \bT)$. 

\begin{proposition}\label{mr}
Let $H^2(K_{\Sigma}/K, \bT)_{\rm fin}$ be the maximal finite $\Lambda$-submodule of $H^2(K_{\Sigma}/K, \bT)$. 
Suppose that $r = 1$. 
If ${\rm im}\left( \Theta \right) \neq 0$, then $H^1(K, \bT)$ is free of rank one and 
$H^1_{\cF^*_\Lambda}(K, \bT^*(1))^\vee$ is a torsion $\Lambda$-module. 
Furthermore, we have  
\begin{align*}
{\rm Fitt}_{0, \Lambda} \left( H^1_{\cF_{\Lambda}^*}(K, \bT^*(1))^\vee \right) 
= {\rm char}_\Lambda \left( H^1(K,\bT)/\Lambda \Theta(\epsilon) \right) 
 {\rm Ann}_{\Lambda}\left( H^2(K_{\Sigma}/K, \bT)_{\rm fin} \right).
\end{align*}
for any primitive $\Lambda$-adic Stark system $\epsilon$. 
In particular, 
\begin{align*}
{\rm char}_\Lambda \left( H^1_{\cF_{\Lambda}^*}(K, \bT^*(1))^\vee \right) = 
{\rm char}_\Lambda \left( H^1(K,\bT)/\Lambda \Theta(\epsilon) \right). 
\end{align*}
\end{proposition}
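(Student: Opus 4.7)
The plan is to bootstrap from $\Theta(\epsilon) \neq 0$ to the freeness of rank one of $H^{1}(K, \bT)$, then to compute $I_{0}(\epsilon)$ explicitly by a Matlis-duality argument at each finite level, and finally to read off the characteristic ideal equality. For the freeness step, since ${\rm im}(\Theta) \neq 0$ and $\Theta$ is $\Lambda$-linear, every primitive $\epsilon$ satisfies $\Theta(\epsilon) \neq 0$; hence for some $n$ the image $\Theta^{(n)}(\epsilon^{(n)}) = l_{1}^{-1}(\epsilon_{1}^{(n)})$ is non-zero, using that $l_{1}$ is an isomorphism by Matlis duality over the Artinian Gorenstein ring $\Lambda/J_{n}$. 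Therefore $I_{0}(\phi_{n}(\epsilon)) = {\rm im}(\epsilon_{1}^{(n)}) \neq 0$, and the compatibility $I_{0}(\phi_{n}(\epsilon)) = I_{0}(\epsilon) \cdot \Lambda/J_{n}$ of Theorem~\ref{main2}~(2) yields $I_{0}(\epsilon) \neq 0$. Theorem~\ref{lambda-stark} then forces $H^{1}_{\cF^{*}_{\Lambda}}(K, \bT^{*}(1))^{\vee}$ to be $\Lambda$-torsion, and Proposition~\ref{rankleop} makes $H^{1}(K, \bT)$ free of rank one.

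Next I would fix a $\Lambda$-basis $\xi$ of $H^{1}(K, \bT)$ and write $\Theta(\epsilon) = \alpha \xi$, so that $\alpha \neq 0$ and ${\rm char}_{\Lambda}(H^{1}(K, \bT)/\Lambda \Theta(\epsilon)) = (\alpha)$. Lemma~\ref{exact}, combined with the observation that under the divisibility hypothesis on $(\cT \otimes \bQ_{p}/\bZ_{p})^{\cI_{\fq}}$ the image of $H^{1}(K, \bT) \otimes \Lambda/J_{n}$ lies inside $H^{1}_{\cF_{n}}(K, \bT/J_{n}\bT)$, produces an exact sequence
\begin{equation*}
0 \to H^{2}(K_{\Sigma}/K, \bT)[J_{n}] \to H^{1}(K, \bT) \otimes_{\Lambda} \Lambda/J_{n} \to H^{1}_{\cF_{n}}(K, \bT/J_{n} \bT).
\end{equation*}
Identifying $H^{1}(K, \bT) \otimes \Lambda/J_{n}$ with $\Lambda/J_{n}$ via $\xi$, I let $I_{n} \subseteq \Lambda/J_{n}$ denote the image of $H^{2}(K_{\Sigma}/K, \bT)[J_{n}]$ and $\bar\xi_{n}$ the image of $\xi$ in $M_{n} := H^{1}_{\cF_{n}}(K, \bT/J_{n}\bT)$; then $\Lambda/J_{n} \cdot \bar\xi_{n} \cong (\Lambda/J_{n})/I_{n}$ and $\Theta^{(n)}(\epsilon^{(n)}) = \alpha \bar\xi_{n}$. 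Self-injectivity of $\Lambda/J_{n}$ (being Artinian Gorenstein) lets every map $(\Lambda/J_{n})/I_{n} \to \Lambda/J_{n}$ extend to $M_{n} \to \Lambda/J_{n}$, so $\{ f(\bar\xi_{n}) \mid f \in M_{n}^{*} \} = (\Lambda/J_{n})[I_{n}] = {\rm Ann}_{\Lambda/J_{n}}(I_{n})$, and therefore
\begin{equation*}
I_{0}(\phi_{n}(\epsilon)) = \alpha \cdot {\rm Ann}_{\Lambda/J_{n}}\!\bigl( H^{2}(K_{\Sigma}/K, \bT)[J_{n}] \bigr).
\end{equation*}

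Since $H^{2}(K_{\Sigma}/K, \bT)$ is finitely generated over $\Lambda$ and $\{J_{n}\}$ is cofinal among $\fm_{\Lambda}$-primary ideals, the ascending chain $\{H^{2}(K_{\Sigma}/K, \bT)[J_{n}]\}$ stabilizes at $H^{2}(K_{\Sigma}/K, \bT)_{\rm fin}$ for $n \gg 0$. Because every ideal of the complete ring $\Lambda$ is closed, passing to the inverse limit gives $I_{0}(\epsilon) = \alpha \cdot {\rm Ann}_{\Lambda}(H^{2}(K_{\Sigma}/K, \bT)_{\rm fin})$, which together with Theorem~\ref{lambda-stark} yields the first displayed equality. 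For the last assertion, pseudo-nullity of $H^{2}(K_{\Sigma}/K, \bT)_{\rm fin}$ makes its annihilator have height $\geq 2$, so localization at any height-one prime $\fp$ of $\Lambda$ gives $I_{0}(\epsilon)_{\fp} = (\alpha) \Lambda_{\fp}$; since Fitting and characteristic ideals of a finitely generated torsion module coincide over a DVR, one concludes ${\rm char}_{\Lambda}(H^{1}_{\cF^{*}_{\Lambda}}(K, \bT^{*}(1))^{\vee}) = (\alpha) = {\rm char}_{\Lambda}(H^{1}(K, \bT)/\Lambda \Theta(\epsilon))$. The main obstacle will be the Matlis-duality identification $\{ f(\bar\xi_{n}) : f \in M_{n}^{*} \} = {\rm Ann}_{\Lambda/J_{n}}(I_{n})$ together with the verification that the map of Lemma~\ref{exact} truly lands in $H^{1}_{\cF_{n}}$ rather than merely in $H^{1}(K_{\Sigma}/K, \bT/J_{n}\bT)$.
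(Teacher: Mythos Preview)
Your proposal is correct and follows essentially the same route as the paper: both use Theorem~\ref{lambda-stark} and Proposition~\ref{rankleop} to obtain freeness of rank one, then fix a $\Lambda$-basis, invoke Lemma~\ref{exact} to identify the image of $H^1(K,\bT)\otimes\Lambda/J_n$ in $H^1_{\cF_n}(K,\bT/J_n\bT)$ with $(\Lambda/J_n)/Z_n$, and compute $I_0(\epsilon^{(n)}) = {\rm Ind}(\epsilon)\cdot(\Lambda/J_n)[Z_n]$ via the self-injectivity of $\Lambda/J_n$ before passing to the limit. Your concern about the map of Lemma~\ref{exact} landing in $H^1_{\cF_n}$ is legitimate but easily resolved, since $H^1(K_\fq,\bT)=H^1_{\rm ur}(K_\fq,\bT)$ for $\fq\nmid p$ and the reduction map $H^1_{\rm ur}(K_\fq,\bT)\to H^1(K_\fq,\bT/J_n\bT)$ factors through $H^1_{\rm ur}(K_\fq,\bT/J_n\bT)$; the paper uses this implicitly.
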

\begin{proof}
Since the map 
$l_1 \colon H^1_{\cF_n}(K, \bT/J_n \bT) \to \bigcap^1_{\Lambda/J_n} H^1_{\cF_n}(K, \bT/J_n \bT)$ 
is an isomorphism for any positive integer $n$ and ${\rm im}\left( \Theta \right) \neq 0$, 
the $\Lambda$-module $H^1_{\cF_{\Lambda}^*}(K, \bT^*(1))^\vee$ is a torsion by Theorem \ref{lambda-stark}. 
Thus $H^1(K, \bT)$ is a free $\Lambda$-module of rank one by Proposition \ref{rankleop}. 
Let $\epsilon = \{\epsilon^{(n)}\}_{n \geq 1}$ be a primitive $\Lambda$-adic Stark system. 
Put ${\rm Ind}(\epsilon) = {\rm char}_\Lambda \left( H^1(K,\bT)/\Lambda \Theta(\epsilon) \right)$. 
Then we have 
\begin{align*}
{\rm Ind}(\epsilon) = \{ f(\Theta(\epsilon)) \mid f \in \Hom_{\Lambda}(H^1(K, \bT), \Lambda) \}. 
\end{align*}
Let $n$ be a positive integer and $\Lambda_n = \Lambda/J_n$. 
We will compute the ideal $I_0(\epsilon^{(n)}) \subseteq \Lambda_n$. 
Let $M_n$ be the image of the map $H^1(K, \bT) \to H^1_{\cF_n}(K, \bT/J_n \bT)$. 
Fix an isomorphism $H^1(K, \bT) \simeq \Lambda$. 
Let $Z_n$ be the image of the injective map 
$H^2(K_{\Sigma}/K, \bT)[J_n] \to H^1(K, \bT) \otimes_\Lambda \Lambda_n \simeq \Lambda_n$. 
By Lemma \ref{exact}, the fixed isomorphism $H^1(K, \bT) \simeq \Lambda$ induces an isomorphism 
$M_{n}^* \simeq \Lambda_n[Z_n]$. 
Let $\Theta(\epsilon)_n = \Theta(\epsilon) \bmod J_n \in H^1_{\cF_n}(K, \bT/J_n\bT)$. 
Since the map $H^1_{\cF_n}(K, \bT/J_n\bT)^* \to M_{n}^*$ is surjective 
and $\Theta(\epsilon)_{n} \in M_n$, we have
\begin{align*}
I_0(\epsilon^{n}) = \{ f(\Theta(\epsilon)_{(n)}) \mid f \in M_{n}^* \} 
= {\rm Ind}(\epsilon) \Lambda_n[Z_n].
\end{align*}
By the definition of the ideal $Z_n$, we have 
\begin{align*}
\ker \left( \Lambda \to \Lambda_n / \Lambda_n[Z_n] \right)
= {\rm Ann}_{\Lambda}\left( H^2(K_{\Sigma}/K, \bT)[J_n] \right). 
\end{align*}
Since the $\Lambda$-module $H^2(K_{\Sigma}/K, \bT)$ is of finite type, we have 
\begin{align*}
\ker \left( \Lambda \to \Lambda_{n}/\Lambda_{n}[Z_n] \right) 
= {\rm Ann}_{\Lambda}\left( H^2(K_{\Sigma}/K, \bT)_{\rm fin} \right) 
\end{align*}
for all sufficiently large integer $n$. 
Thus we conclude that 
\begin{align*}
I_0(\epsilon) 
= \varprojlim_{n \geq 1}{\rm Ind}(\epsilon) \Lambda_{n}[Z_n] 
= \bigcap_{n \geq 1} \left({\rm Ind}(\epsilon) 
{\rm Ann}_{\Lambda}\left( H^2(K_{\Sigma}/K, \bT)_{\rm fin} \right) + J_n \right) 
= {\rm Ind}(\epsilon) {\rm Ann}_{\Lambda}\left( H^2(K_{\Sigma}/K, \bT)_{\rm fin} \right). 
\end{align*}
Hence this proposition follows from Theorem \ref{lambda-stark}. 
\end{proof}

\begin{proposition}\label{lastthm}
For a positive integer $n$, let $i_n \colon \bigwedge^r_\Lambda H^1(K, \bT) \otimes_\Lambda \Lambda/J_n \to 
\bigcap^r_{\Lambda/J_n} H^1_{\cF_n}(K, \bT/J_n\bT)$ 
denote the canonical map. 
If $H^2(K_\Sigma/K, \bT)_{\rm fin} = 0$, there is a unique $\Lambda$-module homomorphism 
\begin{align*}
\Theta \colon {\bf SS}_r(\bT) \to \bigwedge^r_\Lambda H^1(K, \bT) 
\end{align*} 
such that $i_n \circ \Theta(\epsilon) = \epsilon^{(n)}_1$ for any positive integer $n$ and 
$\Lambda$-adic Stark system $\epsilon = \{\epsilon^{(n)} \}_{n \geq 1}$. 
Furthermore, if ${\rm im}\left( \Theta \right) \neq 0$, then $H^1(K, \bT)$ is free of rank $r$, 
$H^1_{\cF^*}(K, \bT^*(1))^\vee$ is a torsion $\Lambda$-module, and 
\begin{align*}
{\rm Fitt}_{0, \Lambda} \left( H^1_{\cF^*_{\Lambda}}(K, \bT^*(1))^\vee \right) = 
{\rm char}_\Lambda \left( \bigwedge^r_\Lambda H^1(K, \bT) \bigg/ \Lambda \Theta(\epsilon) \right)  
\end{align*}
for any primitive $\Lambda$-adic Stark system $\epsilon$. 
\end{proposition}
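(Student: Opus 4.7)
The plan is to use Proposition \ref{Fitt} at each zero dimensional quotient $\Lambda_n := \Lambda/J_n$ to construct $\Theta$ one level at a time, and then to pass to the inverse limit using the completeness of $\Lambda$. A preliminary observation is that the assumption $H^2(K_\Sigma/K, \bT)_{\rm fin} = 0$ forces $H^2(K_\Sigma/K, \bT)[J_n] = 0$ for every $n$: each such submodule is a finitely generated $\Lambda_n$-module, hence finite as an abelian group, and therefore contained in the maximal finite submodule, which is zero. Combined with Lemma \ref{exact}, this yields an injection
\begin{align*}
j_n \colon H^1(K, \bT)/J_n \hookrightarrow H^1_{\cF_n}(K, \bT/J_n \bT).
\end{align*}

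By Remark \ref{free}, $H^1(K, \bT)$ is a free $\Lambda$-module; write $s$ for its rank and fix a $\Lambda$-basis $c_1, \ldots, c_s$, keeping the same letters for their images in $H^1(K, \bT)/J_n$ and in $H^1_{\cF_n}(K, \bT/J_n \bT)$. Since $H^1(K, \bT)/J_n$ is a free $\Lambda_n$-module, the canonical map $l_1$ on it is an isomorphism, and $\bigcap^r_{\Lambda_n} j_n$ is injective because taking the exterior bidual preserves injections over the zero dimensional Gorenstein ring $\Lambda_n$; hence $i_n$ is injective. I then apply Proposition \ref{Fitt} to $(\Lambda_n, \bT/J_n\bT, \cF_n)$ with the free rank-$r$ submodule $\sum_{i=1}^r \Lambda_n c_i$ of $H^1_{\cF_n}(K, \bT/J_n\bT)$ to obtain a unique $\theta_n \in \Lambda_n$ satisfying
\begin{align*}
\epsilon_1^{(n)} = l_1 \bigl( \theta_n \, c_1 \wedge \cdots \wedge c_r \bigr)
\quad \text{and} \quad
\theta_n \Lambda_n = {\rm Fitt}_{0, \Lambda_n} \bigl( H^1_{\cF_n^*}(K, (\bT/J_n\bT)^*(1))^* \bigr).
\end{align*}
The Stark system compatibility $\phi_{n+1, n}(\epsilon^{(n+1)}) = \epsilon^{(n)}$ together with the uniqueness in Proposition \ref{Fitt} yields $\theta_{n+1} \equiv \theta_n \pmod{J_n}$, so $\{\theta_n\}$ lifts to $\theta \in \Lambda = \varprojlim_n \Lambda_n$. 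Setting $\Theta(\epsilon) := \theta \cdot c_1 \wedge \cdots \wedge c_r$, the identity $i_n(\Theta(\epsilon) \bmod J_n) = \epsilon_1^{(n)}$ holds by construction; the uniqueness and $\Lambda$-linearity of $\Theta$ then follow from injectivity of each $i_n$, the $\Lambda_n$-linear dependence of $\theta_n$ on $\epsilon$, and the $\{J_n\}$-adic completeness of $\bigwedge^r_\Lambda H^1(K, \bT)$.

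For the final part, assume ${\rm im}(\Theta) \neq 0$. If $s \geq r+1$, then $c_1, \ldots, c_{r+1}$ generate a free $\Lambda_n$-submodule of rank $r+1$ in $H^1_{\cF_n}(K, \bT/J_n \bT)$ via $j_n$, so the ``In particular'' clause of Proposition \ref{Fitt} forces $\epsilon_1^{(n)} = 0$ for every primitive $\epsilon$ and every $n$; injectivity of $i_n$ then gives $\theta = 0$, so $\Theta \equiv 0$ on the rank one $\Lambda$-module ${\bf SS}_r(\bT)$, contradicting ${\rm im}(\Theta) \neq 0$. Hence $s = r$, and Proposition \ref{rankleop} gives that $H^1_{\cF_\Lambda^*}(K, \bT^*(1))^\vee$ is $\Lambda$-torsion. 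Identifying $\bigwedge^r_\Lambda H^1(K, \bT) \simeq \Lambda$ via $c_1 \wedge \cdots \wedge c_r \mapsto 1$, the element $\Theta(\epsilon)$ corresponds to $\theta$, and Theorem \ref{main2}(1) combined with $I_0(\epsilon) = \varprojlim_n \theta_n \Lambda_n = (\theta)$ — the last equality holding because every ideal of the complete noetherian local ring $\Lambda$ is closed — yields
\begin{align*}
{\rm Fitt}_{0, \Lambda} \bigl( H^1_{\cF^*_{\Lambda}}(K, \bT^*(1))^\vee \bigr) = (\theta) = {\rm char}_\Lambda \bigl( \bigwedge^r_\Lambda H^1(K, \bT) \big/ \Lambda \Theta(\epsilon) \bigr).
\end{align*}

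The main obstacle will be the compatibility step: verifying that the uniquely determined elements $\theta_n \in \Lambda_n$ produced at each level genuinely agree modulo $J_n$. This combines the naturality of the entire construction in Proposition \ref{Fitt} under the ring surjection $\Lambda_{n+1} \twoheadrightarrow \Lambda_n$ — which ensures that the reduction of $\theta_{n+1}$ modulo $J_n$ satisfies the characterizing identity of $\theta_n$ — with the uniqueness clause of Proposition \ref{Fitt} promoting that identity to an equality.
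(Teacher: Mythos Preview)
Your proof is correct and follows essentially the same approach as the paper: kill $H^2(K_\Sigma/K,\bT)[J_n]$ via the finiteness hypothesis and Lemma \ref{exact} to embed $H^1(K,\bT)/J_n$ as a free summand, apply Proposition \ref{Fitt} at each level, pass to the inverse limit, and finish with Proposition \ref{rankleop} and the closedness of ideals in $\Lambda$. One small omission: before forming the rank-$r$ submodule $\sum_{i=1}^r \Lambda_n c_i$ and invoking Proposition \ref{Fitt}, you should record (as the paper does, via Proposition \ref{freerank}) that $s \geq r$; also note that Proposition \ref{Fitt} is stated only for a \emph{basis} of the Stark module, so the Fitting-ideal identity $\theta_n \Lambda_n = {\rm Fitt}_{0,\Lambda_n}(\cdots)$ holds only when $\epsilon$ is primitive---which is all you actually use it for---while the defining identity $\epsilon_1^{(n)} = l_1(\theta_n\, c_1\wedge\cdots\wedge c_r)$ extends to arbitrary $\epsilon$ by linearity.
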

\begin{proof}
Let $n$ be a positive integer and $s = {\rm rank}_\Lambda \left( H^1(K, \bT) \right)$. 
Note that $s \geq r$ by Proposition \ref{freerank}. 
Let $\Lambda_n = \Lambda/J_n$. 
By Lemma \ref{exact} and $H^2(K_\Sigma/K, \bT)_{\rm fin} = 0$, 
the map $H^1(K, \bT) \otimes_\Lambda \Lambda_n \to H^1_{\cF_n}(K, \bT/J_n\bT) $
is a split injection. 
Thus $H^1_{\cF_n}(K, \bT/J_n\bT)$ has a free $\Lambda_n$-submodule of rank $s$ and 
the map $i_n$ is injective. 
Let $\epsilon = \{\epsilon^{(n)} \}_{n \geq 1}$ be a $\Lambda$-adic Stark system. 
By Proposition \ref{Fitt}, there is a unique element 
$\Theta(\epsilon)_n \in \bigwedge^{r}_\Lambda H^1(K, \bT) \otimes_\Lambda \Lambda_n$ 
such that $i_n \circ \Theta(\epsilon)_n = \epsilon^{(n)}_1$. 
By the proof of Proposition \ref{Fitt}, 
the set $\{ \Theta(\epsilon)_{n} \}_{n \geq 1}$ becomes an inverse system. 
Thus we have the desired map 
$\Theta(\epsilon) := \varprojlim_{n \geq 1} \Theta(\epsilon)_n$. 
Let $\epsilon$ be a primitive $\Lambda$-adic Stark system. 
If ${\rm im}(\Theta) \neq 0$, we have $s = r$ by Proposition \ref{Fitt}. 
Hence $H^1_{\cF^*_\Lambda}(K, \bT^*(1))^\vee$ is a torsion $\Lambda$-module by Proposition \ref{rankleop}. 
Furthermore, we have  
\begin{align*}
{\rm Fitt}_{0, \Lambda} \left( \bigwedge^r_\Lambda H^1(K, \bT) \bigg/ \Lambda \Theta(\epsilon) \right)\Lambda_n 
&= {\rm Fitt}_{0, \Lambda_n} \left( \bigwedge^r_\Lambda H^1(K, \bT) \otimes_\Lambda \Lambda_n 
\bigg/ \Lambda_n \Theta(\epsilon)_n \right) 
\\
&= {\rm Fitt}_{0, \Lambda_n} \left( H^1_{\cF_{n}^*}(K, (\bT/J_n\bT)^*)^* \right) 
\\
&= {\rm Fitt}_{0, \Lambda} \left( H^1_{\cF_{\Lambda}^*}(K, \bT^*(1))^\vee \right) \Lambda_n
\end{align*}
where the second equality follows from Proposition \ref{Fitt} and 
the third equality follows from Remark \ref{dual} and Lemma \ref{dmult}. 
Since $\Lambda$ is a complete noetherian local ring, this completes the proof. 
\end{proof}

\begin{remark}\label{buk}
We use the same notation as in Example \ref{cyclo}. 
If we assume Rubin-Stark conjecture, then 
we have a primitive $\Lambda$-adic stark system $\epsilon^{\text{R-S}}$ 
(See \cite[Section 4.3]{Bu}, \cite{bus}, and Example \ref{cyclo}). 
Since $H^2(K_\Sigma/K, \bT)_{\rm fin} = 0$, we have 
\begin{align*}
{\rm char} \left( H^1_{\cF^*_{\Lambda}}(K, \bT^*(1))^\vee \right) = 
{\rm char} \left( \bigwedge^r_{\Lambda} H^1(K, \bT) \bigg/ \Lambda \Theta(\epsilon^{\text{R-S}}) \right) 
\end{align*}
by Proposition \ref{lastthm}. 
Let $H^1(K_p, \bT) = \bigoplus_{\fp \mid p}H^1(K_\fp, \bT)$ and 
${\rm loc}_p \colon H^1(K, \bT) \to H^1(K_p, \bT)$ denote the localization map at $p$. 
Suppose that the map ${\rm loc}_p$ is injective. 
Then by Theorem \ref{pt}, we obtain an exact sequence 
\begin{align*}
0 \to H^1(K, \bT) \to H^1(K_p, \bT) \to H^1_{\cF^*_{\rm str}}(K, \bT^*(1))^\vee 
\to H^1_{\cF_{\Lambda}^*}(K, \bT^*(1))^\vee \to 0. 
\end{align*}
Here the Selmer structure $\cF_{\rm str}$ is defined in \cite[proposition 4.11]{Bu}. 
Then we have 
${\rm rank}_{\Lambda} \left( H^1(K, \bT) \right) = {\rm rank}_{\Lambda} \left( H^1(K_p, \bT) \right) = r$ 
by Proposition \ref{lastthm} and \cite[Proposition 4.7 (i)]{Bu}. 
Thus we get an equality
\begin{align*}
{\rm char} \left( H^1_{\cF^*_{\rm str}}(K, \bT^*(1))^\vee \right) = 
{\rm char} \left( \bigwedge^r_\Lambda H^1(K_p, \bT) \bigg/ 
\Lambda \cdot {\rm loc}_p(\Theta(\epsilon^{\text{R-S}})) \right). 
\end{align*}
Therefore we show that Theorem \ref{lastthm} implies \cite[Theorem 4.15]{Bu}. 
\end{remark}

\end{document}